\numberwithin{equation}{section}
\numberwithin{figure}{section}
\newcommand\norm[1]{\left\lVert#1\right\rVert}
\newcommand*\curl{\mathop{}\!\mathrm{curl}}
\renewcommand*\div{\mathop{}\!\mathrm{div}}
\newcommand*\HOne{H^1(\Omega)}
\newcommand*\HDirOne{H_D^1(\Omega)}
\newcommand*\HZeroOne{H_0^1(\Omega)}
\newcommand*\HDiv{\pmb{H}(\Omega, \div)}
\newcommand*\HNeuDiv{\pmb{H}_N(\Omega, \div)}
\newcommand*\HZeroDiv{\pmb{H}_0(\Omega, \div)}
\newcommand*\HCurl{\pmb{H}(\Omega, \curl)}
\newcommand*\HNeuCurl{\pmb{H}_N(\Omega, \curl)}
\newcommand*\HZeroCurl{\pmb{H}_0(\Omega, \curl)}
\newcommand*\Sp{S_{p_s}(\mathcal{T}_h)}
\newcommand*\SpZero{S^0_{p_s}(\mathcal{T}_h)}
\newcommand*\SpDir{S^D_{p_s}(\mathcal{T}_h)}
\newcommand*\Nedelec{\pmb{\mathrm{N}}_{p_v}(\mathcal{T}_h)}
\newcommand*\NedelecZero{\pmb{\mathrm{N}}^0_{p_v}(\mathcal{T}_h)}
\newcommand*\NedelecNeu{\pmb{\mathrm{N}}^N_{p_v}(\mathcal{T}_h)}
\newcommand*\BDM{\pmb{\mathrm{BDM}}_{p_v}(\mathcal{T}_h)}
\newcommand*\BDMZero{\pmb{\mathrm{BDM}}^0_{p_v}(\mathcal{T}_h)}
\newcommand*\BDMNeu{\pmb{\mathrm{BDM}}^N_{p_v}(\mathcal{T}_h)}
\newcommand*\RT{\pmb{\mathrm{RT}}_{p_v-1}(\mathcal{T}_h)}
\newcommand*\RTZero{\pmb{\mathrm{RT}}^0_{p_v-1}(\mathcal{T}_h)}
\newcommand*\RTNeu{\pmb{\mathrm{RT}}^N_{p_v-1}(\mathcal{T}_h)}
\newcommand*\RTBDM{\pmb{\mathrm{V}}_{p_v}(\mathcal{T}_h)}
\newcommand*\RTBDMZero{\pmb{\mathrm{V}}^0_{p_v}(\mathcal{T}_h)}
\newcommand*\RTBDMNeu{\pmb{\mathrm{V}}^N_{p_v}(\mathcal{T}_h)}
\newcommand*\Ih{\pmb{I}_h}
\newcommand*\IhZero{\pmb{I}_h^0}
\newcommand\restr[2]{{
  \left.\kern-\nulldelimiterspace 
  #1 
  \vphantom{\big|} 
  \right|_{#2} 
  }}
\def\phiuvec{
  \begin{pmatrix}
      \pmb{\varphi} \\
      u
  \end{pmatrix}}
  \def\psivvec{
    \begin{pmatrix}
        \pmb{\psi} \\
        v
    \end{pmatrix}}
\def\productspace{\HNeuDiv \times \HDirOne}
\def\ltwo{\Omega}
\newcommand{\eremk}{\hbox{}\hfill\rule{0.8ex}{0.8ex}}
\newtheorem{theorem}{Theorem}[section]
\newtheorem{lemma}[theorem]{Lemma}
\newtheorem{corollary}[theorem]{Corollary}
\theoremstyle{definition}
\newtheorem{example}[theorem]{Example}
\newtheorem{remark}[theorem]{Remark}
\newtheorem{assumption}[theorem]{Assumption}
\begin{document}

\title{Optimal convergence rates in $L^2$ for a \\ first order system least squares \\ finite element method. \\ Part I: homogeneous boundary conditions}
\author{
  M. Bernkopf
  \thanks{
  Institute for Analysis and Scientific Computing,
  TU Wien,
  A-1040 Vienna,
  \href{mailto:maximilian.bernkopf@tuwien.ac.at}{maximilian.bernkopf@tuwien.ac.at}
  }
  \and
  J.M. Melenk
  \thanks{
  Institute for Analysis and Scientific Computing,
  TU Wien,
  A-1040 Vienna,
  \href{mailto:melenk@tuwien.ac.at}{melenk@tuwien.ac.at}
  }
}

\maketitle
\selectlanguage{english}
\pagenumbering{arabic}

\begin{abstract}
  We analyze a divergence based first order system least squares method applied to a second order elliptic model problem with homogeneous boundary conditions.
  We prove optimal convergence in the $L^2(\Omega)$ norm for the scalar variable.
  Numerical results confirm our findings.
\end{abstract}

\section{Introduction}\label{section:introduction}

Least Squares Finite Element Methods (LSFEM) are an important class of numerical methods for the solution of partial differential equations
with a variety of applications.  The main idea of the LSFEM  is to reformulate the partial differential equation
of interest as a minimization problem, for which a variety of tools is available. For example, even for non-symmetric or indefinite problems, the
discretization with the least squares approach leads to  symmetric, positive definite systems, which can be solved with well-established numerical technologies.
Furthermore, the least squares technique is naturally quasi-optimal, albeit in a problem-dependent norm. For second order PDEs, which is the setting of the
present work, the most common least squares approach is that of rewriting the equation as a First Order Least Squares System (FOSLS) that can be
discretized with established finite element techniques. A benefit is that many quantities of interest are approximated directly without the need of postprocessing.
We mention \cite{bochev-gunzburger09} as a classical monograph on the topic as well as the papers \cite{jespersen77, cai-lazarov-manteufel-mccormick94, cai-manteufel-mccormick97, bochev-gunzburger05}.

The present work considers a Poisson-like second order model problem written as a system of first order equations.
For the discretization, an $\HDiv \times \HOne$-conforming least squares formulation is employed.
Even though our model problem in its standard $\HOne$ formulation is coercive our methods and lines of proof can most certainly be applied to other problems as well,
see \cite{bernkopf-melenk19, chen-qiu17} for an application to the Helmholtz equation.
The LSFEM is typically quasi-optimality in some problem-dependent energy norm, which is, however, somewhat intractable; \textsl {a priori}
error estimates in more familiar norms such as the $L^2(\Omega)$ norm of the scalar variable are thus desirable.
Numerical examples in our previous work \cite{bernkopf-melenk19} suggested convergence rates in standard norms such as the $L^2(\Omega)$-norm which,
to our best knowledge, are not explained by the current theory. In the present work, we develop such a convergence theory
with minimal assumptions on the regularity of the right-hand side.
\\
\subsection{Contribution of the present work}
Our main contribution are optimal $L^2(\Omega)$ based convergence result for the least squares approximation $u_h$ to the scalar variable $u$.
Furthermore, we derive $hp$ error estimates for the gradient of the scalar variable $u$, which do not seem to be available in the current literature,
as well as an $hp$ error estimate for the vector variable $\pmb{\varphi}$ in the $L^2(\Omega)$ norm, which is available in the literature for a pure $h$-version.
These optimality results are new in the sense that we achieve optimal convergence rates under minimal regularity assumptions on the data.
Here, we call a method optimal in a certain norm, if the norm of the error made by the method
is of the same order as the best approximation of the employed space.

\subsubsection{Review of related results}
In \cite{jespersen77} the author considered the classical model problem
  $-\Delta u = f$ with inhomogeneous Dirichlet boundary condition $u = g$ in some smooth domain $\Omega$.
  Unlike the present work the least squares formulation employs vector valued $H^1(\Omega)$ functions
  instead of $\HDiv$ for the vector variable.
  The corresponding finite element spaces are chosen such that they satisfy simultaneous approximation properties
  in $L^2(\Omega)$ and $H^1(\Omega)$ for both the scalar variable $u$ and the vector variable $\pmb{\varphi}$.
  Using a duality argument akin to the one used in the present work the author arrived at the error estimate
  \begin{equation*}
    \norm{u-u_h}_{L^2(\Omega)} \lesssim h \norm{(\pmb{\varphi} - \pmb{\varphi}_h , u-u_h )}_b,
  \end{equation*}
  see \cite[Thm.~{4.1}]{jespersen77}, where $\norm{(\cdot, \cdot)}_b$ denotes the corresponding energy norm.
  At this point higher order convergence rates are just a question of approximation properties in $\norm{(\cdot, \cdot)}_b$,
  see \cite[Lemma~{3.1}]{jespersen77} for a precise statement.
  As stated after the proof of \cite[Thm.~{4.1}]{jespersen77}, one can extract optimal convergence rates for sufficiently smooth data $f$ and $g$.
The smoothness of the data is important as the following considerations show:
  For the case of a smooth boundary $\Gamma$ and $f \in L^2(\Omega)$ and $g \in H^{3/2}(\Gamma)$,
  elliptic regularity gives $u \in H^2(\Omega)$.
  Therefore $u$ can be approximated by globally continuous piecewise polynomials of degree greater or equal
  to one with a error $O(h^2)$ in the $L^2(\Omega)$ norm,
  which is achieved by classical FEM, due to the Aubin-Nitsche trick.
  In contrast, the above least squares estimate does not give the desired rate:
  The norm $\norm{( \pmb{\varphi} - \pmb{\varphi}_h, u-u_h )}_b$ contains a term of the form
  \begin{equation*}
    \norm{ \nabla \cdot ( \pmb{\varphi} - \pmb{\varphi}_h ) }_{L^2(\Omega)} = \norm{ f - \nabla \cdot \pmb{\varphi}_h }_{L^2(\Omega)},
  \end{equation*}
  from which no further convergence rate can be extracted, since $f$ is only in $L^2(\Omega)$.

In \cite{cai-lazarov-manteufel-mccormick94} (see also \cite{cai-manteufel-mccormick97}) the problem $- \nabla \cdot (A \nabla u) + X u = f$
  with uniformly elliptic diffusion matrix $A$ and $X$ a linear differential operator of order at most one together with
  homogeneous mixed Dirichlet and Neumann boundary conditions was considered.
  The least squares formulation presented therein employs the same spaces as the present work.
  Apart from nontrivial norm equivalence results, see \cite[Thm.~{3.1}]{cai-lazarov-manteufel-mccormick94},
  they also derived the following estimate of the least squares approximation
  \begin{equation*}
    \norm{u - u_h}_{H^1(\Omega)} + \norm{ \pmb{\varphi} - \pmb{\varphi}_h }_{H(\div, \Omega)} \lesssim h^s (\norm{u}_{H^{s+1}(\Omega)} + \norm{\pmb{\varphi}}_{H^{s+1}(\Omega)}),
  \end{equation*}
  assuming $u \in H^{s+1}(\Omega)$ and $\pmb{\varphi} \in \pmb{H}^{s+1}(\Omega)$.
  This result is then optimal in the stated norm, however, the assumed regularity is somewhat unsatisfactory,
  in the sense that if the solution $u \in H^{s+1}(\Omega)$ then the relation $\nabla u + \pmb{\varphi} = 0$ merely provides the
regularity $\pmb{\varphi} \in \pmb{H}^{s}(\Omega)$ and not the assume
regularity $\pmb{\varphi} \in \pmb{H}^{s+1}(\Omega)$.

Finally in \cite{bochev-gunzburger05} the same model problem as well as the same least squares formulation is considered.
  The main goal of \cite{bochev-gunzburger05} is to establish $L^2(\Omega)$ error estimates for $u$ and $\pmb{\varphi}$.
  In \cite[Lemma~{3.4}]{bochev-gunzburger05} a result similar to \cite[Thm.~{4.1}]{jespersen77} is obtained.
  This result, however, suffers from the same drawback as elaborated above.
  Furthermore, they prove optimality of the error of the vector variable $\pmb{\varphi}$ in the $L^2(\Omega)$ norm, see \cite[Cor.~{3.7}]{bochev-gunzburger05}.

The main tools for \textsl{a priori} error estimates in more tractable norms such as  $L^2(\Omega)$ instead of the energy norm
in a least squares setting are, as it is done in the present paper and the above literature,
duality arguments, which lead to an estimate of the form
\begin{equation*}
  \norm{u-u_h}_{L^2(\Omega)} \lesssim h \norm{( \pmb{\varphi} - \pmb{\varphi}_h , u-u_h )}_b.
\end{equation*}
As elaborated above it is not possible to extract the desired optimal rate from this estimate directly.
In the proof of one of our main result (Theorem~\ref{theorem:e_u_optimal_l2_error_estimate}) we exploit the duality argument in a more delicate way,
which allows us to lower the regularity requirements on $\pmb{\varphi}$ to what could be expected from the regularity of the data $f$.
Key components in the proof are the div-conforming approximation operators $\IhZero$ and $\Ih$ (cf.\ Lemmas~\ref{lemma:Ih_well_defined}, \ref{lemma:properties_of_Ih}),
which are also of independent interest.

\subsubsection{Notation}
Throughout this work, $\Omega$ denotes a bounded simply connected domain in $\mathbb{R}^n$, $n \in\{ 2,3\}$,
with $C^\infty$ boundary $\Gamma \coloneqq \partial \Omega$ and outward unit normal vector $\pmb{n}$.
Let $\Gamma$ consist of two disjoint parts $\Gamma_D$ and $\Gamma_N$.
We consider the following spaces:

\noindent\begin{minipage}{.48\linewidth}
\begin{align*}
  \HOne      &= \{ u \in L^2(\Omega) \colon \nabla u \in \pmb{L}^2(\Omega) \}, \\
  \HDirOne   &= \{ u \in \HOne       \colon u = 0 \text{ on } \Gamma_D \}, \\
  \HZeroOne  &= \{ u \in \HOne       \colon u = 0 \text{ on } \Gamma \},
\end{align*}
\end{minipage}%
\noindent\begin{minipage}{.48\linewidth}
  \begin{align*}
    \HCurl     &= \{ \pmb{\varphi} \in \pmb{L}^2(\Omega) \colon \nabla \times \pmb{\varphi} \in \pmb{L}^2(\Omega) \}, \\
    \HNeuCurl  &= \{ \pmb{\varphi} \in \HCurl            \colon \pmb{n} \times \pmb{\varphi} = 0 \text{ on } \Gamma_N \}, \\
    \HZeroCurl &= \{ \pmb{\varphi} \in \HCurl            \colon \pmb{n} \times \pmb{\varphi} = 0 \text{ on } \Gamma \},
  \end{align*}
\end{minipage}
\\
\begin{minipage}{\linewidth}
  \begin{align*}
    \HDiv      &= \{ \pmb{\varphi} \in \pmb{L}^2(\Omega) \colon \nabla \cdot \pmb{\varphi} \in L^2(\Omega) \}, \\
    \HNeuDiv   &= \{ \pmb{\varphi} \in \HDiv             \colon \pmb{\varphi} \cdot \pmb{n} = 0 \text{ on } \Gamma_N \}, \\
    \HZeroDiv  &= \{ \pmb{\varphi} \in \HDiv             \colon \pmb{\varphi} \cdot \pmb{n} = 0 \text{ on } \Gamma \}.
  \end{align*}
\end{minipage}
\\
For further detail and references see \cite{monk03,boffi-brezzi-fortin13}.
Since we will look at a first order system formulation we have two finite element spaces to choose,
one for the scalar variable $u$ and one for the vector variable $\pmb{\varphi}$.
We consider the following finite element spaces:

\noindent\begin{minipage}{.25\linewidth}
\begin{align*}
  \Sp     &\subseteq \HOne, \\
  \SpDir  &\subseteq \HDirOne, \\
  \SpZero &\subseteq \HZeroOne,
\end{align*}
\end{minipage}%
\begin{minipage}{.25\linewidth}
\begin{align*}
  \Nedelec     &\subseteq \HCurl, \\
  \NedelecNeu  &\subseteq \HNeuCurl, \\
  \NedelecZero &\subseteq \HZeroCurl,
\end{align*}
\end{minipage}%
\begin{minipage}{.50\linewidth}
\begin{align*}
  \RT     \subseteq \BDM     &\subseteq \HDiv, \\
  \RTNeu  \subseteq \BDMNeu  &\subseteq \HNeuDiv, \\
  \RTZero \subseteq \BDMZero &\subseteq \HZeroDiv,
\end{align*}
\end{minipage}
\\
\\
\noindent
where the polynomial approximation of the scalar and vector variable is denoted by $p_s \geq 1$ and $p_v \geq 1$ respectively.
For brevity we also denote by $\RTBDM$ either the space $\RT$ or $\BDM$.
The spaces $\RTBDMNeu$ and $\RTBDMZero$ are denoted analogously.
Furthermore, the N\'ed\'elec space $\Nedelec$ is either of type one or two, depending on the choice of $\RTBDM$.
The same convention applies to the spaces with boundary conditions. See again \cite{monk03} for further details as well as Section~\ref{section:error_analysis}.
Further notational conventions will be:
\begin{itemize}
  \item lower case roman letters like $u$ and $v$ will be reserved for scalar valued functions;
  \item lower case boldface greek letters like $\pmb{\varphi}$ and $\pmb{\psi}$ will be reserved for vector valued functions;
  \item $K$ denotes the physical element and $\widehat{K}$ denotes the reference element;
  \item quantities like $u_h$ and $\pmb{\varphi}_h$ will be reserved for functions from the corresponding finite element space, again scalar and vector valued respectively;
  \item if not stated otherwise discrete functions without a \, $\tilde{\cdot}$ \, will be in some sense fixed, e.g., resulting from a certain discretization scheme,
        whereas functions with a \, $\tilde{\cdot}$ \, will be arbitrary, e.g., when dealing with quasi-optimality results.
\end{itemize}

\subsubsection{Outline}
The outline of this paper is as follows.
In Section~\ref{section:model_problem} we introduce the model problem,
the first order system least squares (FOSLS) method itself and prove norm equivalence results,
which in turn guarantee unique solvability of the continuous as well as the discrete least squares formulation.
Section~\ref{section:duality_argument} is devoted to the proof of duality results for the scalar variable, the gradient of the scalar variable as well as the vector variable.
In the beginning of Section~\ref{section:error_analysis} we first exploit the duality result of Section~\ref{section:duality_argument} in order to prove $L^2(\Omega)$ error estimates for the scalar variable of the primal as well as the dual problem.
We then argue first heuristically that these results are actually suboptimal and can be further improved.
To that end we introduce an approximation operator that also satisfies certain orthogonality relations
and prove best approximation results for this operator,
which are then used to prove our main result (Theorem~\ref{theorem:e_u_optimal_l2_error_estimate}).
Furthermore, we derive $L^2(\Omega)$ error estimates for the gradient of the scalar variable as well as the vector variable.
In Section~\ref{section:numerical_examples} we present numerical examples showcasing the proved convergence rates,
focusing especially on the case of finite Sobolev regularity.

\section{Model problem}\label{section:model_problem}

Let $\Gamma = \partial\Omega$ consist of two disjoint parts $\Gamma_D$ and $\Gamma_N$ and let $f \in L^2(\Omega)$.
(Later, we will focus on the special cases $\Gamma = \Gamma_D$ and $\Gamma=  \Gamma_N$.)
For $\gamma > 0$ fixed we consider the following model problem
\begin{align}
\label{eq:model_problem}
\begin{alignedat}{2}
  - \Delta u + \gamma u &= f \quad   &&\text{in } \Omega,\\
                      u &= 0         &&\text{on } \Gamma_D, \\
           \partial_n u &= 0         &&\text{on } \Gamma_N.
\end{alignedat}
\end{align}
We formulate (\ref{eq:model_problem}) a first order system.
Introducing the new variable $\pmb{\varphi} = -\nabla u$ we formally arrive at the system
\begin{subequations}
\label{eq:model_problem_first_order_system}
\begin{alignat}{2}
  \nabla \cdot \pmb{\varphi} + \gamma u &= f \quad   &&\text{in } \Omega,   \\
               \nabla u + \pmb{\varphi} &= \pmb{0}   &&\text{in } \Omega,   \\
                                      u &= 0         &&\text{on } \Gamma_D, \\
            \pmb{\varphi} \cdot \pmb{n} &= 0         &&\text{on } \Gamma_N.
\end{alignat}
\end{subequations}
Introducing the differential operator $\mathcal{L} \colon \productspace \to L^2(\Omega) \times \pmb{L}^2(\Omega)$, given by
\begin{equation*}
  \mathcal{L} \phiuvec =
  \begin{pmatrix} \nabla \cdot & \gamma \\ 1 & \nabla \end{pmatrix} \phiuvec =
    \begin{pmatrix} \nabla \cdot \pmb{\varphi} + \gamma u  \\ \nabla u + \pmb{\varphi} \end{pmatrix},
\end{equation*}
we want to solve the equation
\begin{equation*}
  \mathcal{L} \phiuvec = \begin{pmatrix} f \\ \pmb{0} \end{pmatrix}.
\end{equation*}
The least squares approach to this problem is to find $(\pmb{\varphi}, u) \in \productspace$ such that
\begin{equation*}
  \left( \mathcal{L} \phiuvec, \mathcal{L} \psivvec \right)_{\ltwo}
  =
  \left(\begin{pmatrix} f \\ \pmb{0} \end{pmatrix} ,  \mathcal{L} \psivvec \right)_{\ltwo}
  \quad \forall \, (\pmb{\psi}, v) \in \productspace,
\end{equation*}
where $(\cdot , \cdot)_{\ltwo}$ denotes the usual $L^2(\Omega)$ scalar product.
Introducing now the bilinear form $b$ and the linear functional $F$ by
\begin{align}
\label{eq:b}
	b( (\pmb{\varphi}, u), (\pmb{\psi}, v) )
  &\coloneqq
  ( \nabla \cdot \pmb{\varphi} + \gamma u , \nabla \cdot \pmb{\psi} + \gamma v )_{\ltwo}
  +
  ( \nabla u + \pmb{\varphi} , \nabla v + \pmb{\psi} )_{\ltwo}, \\
\label{eq:F}
	F((\pmb{\psi}, v)) &\coloneqq ( f,  \nabla \cdot \pmb{\psi} + \gamma v )_{\ltwo},
\end{align}
we can state the mixed weak least squares formulation: Find $(\pmb{\varphi}, u) \in \productspace$ such that
\begin{equation}
\label{eq:fosls_method_continuous}
  b( (\pmb{\varphi}, u), (\pmb{\psi}, v) ) = F((\pmb{\psi}, v)) ~~~ \forall \, (\pmb{\psi}, v) \in \productspace.
\end{equation}
To see solvability of \eqref{eq:fosls_method_continuous},
let $u \in \HDirOne$ be the unique solution of (\ref{eq:model_problem}). In view of $f \in L^2(\Omega)$ the pair
$(-\nabla u,u)$ is a solution of (\ref{eq:fosls_method_continuous}).
  Uniqueness follows if one can show that
  $b( (\pmb{\varphi}, u), (\pmb{\psi}, v) ) = 0$
  for all
  $(\pmb{\psi}, v) \in \productspace$
  implies $(\pmb{\varphi}, u) = (\pmb{0},0)$.
  To that end we introduce the (yet to be verified) norm $\norm{\cdot}_b$ induced by $b$:
  \begin{equation}
\label{eq:b-norm}
    \norm{ (\pmb{\varphi}, u) }_b \coloneqq \sqrt{   b( (\pmb{\varphi}, u), (\pmb{\varphi}, u) ) }.
  \end{equation}
  A general approach would be to show norm equivalence.
  In our case:
  \begin{equation*}
    \norm{u}_{H^1(\Omega)} + \norm{\pmb{\varphi}}_{\HDiv}
    \lesssim \norm{( \pmb{\varphi}, u)}_b
    \lesssim \norm{u}_{H^1(\Omega)} + \norm{\pmb{\varphi}}_{\HDiv}.
  \end{equation*}
  We will employ methods similar to a duality argument in the following Theorem~\ref{theorem:norm_equivalence} to prove such a norm equivalence.

\begin{theorem}[Norm equivalence]\label{theorem:norm_equivalence}
  For all $(\pmb{\varphi}, u) \in \productspace$ there holds the norm equivalence
  \begin{equation}\label{eq:norm_equivalence}
    \norm{u}_{H^1(\Omega)}^2 + \norm{\pmb{\varphi}}_{\HDiv}^2
    \lesssim b( (\pmb{\varphi}, u), (\pmb{\varphi}, u) )
    \lesssim \norm{u}_{H^1(\Omega)}^2 + \norm{\pmb{\varphi}}_{\HDiv}^2.
  \end{equation}
\end{theorem}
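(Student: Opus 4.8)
The plan is to prove the two inequalities separately, the lower (coercivity) bound being the substantive one. The upper (continuity) bound I would dispatch immediately: expanding the definition \eqref{eq:b} and applying the triangle and Young inequalities, each of the two squared residuals is controlled by $\norm{u}_{H^1(\Omega)}^2 + \norm{\pmb{\varphi}}_{\HDiv}^2$ up to a constant depending only on $\gamma$, namely $\norm{\nabla\cdot\pmb{\varphi} + \gamma u}_{L^2(\Omega)}^2 \lesssim \norm{\nabla\cdot\pmb{\varphi}}_{L^2(\Omega)}^2 + \norm{u}_{L^2(\Omega)}^2$ and $\norm{\nabla u + \pmb{\varphi}}_{L^2(\Omega)}^2 \lesssim \norm{\nabla u}_{L^2(\Omega)}^2 + \norm{\pmb{\varphi}}_{L^2(\Omega)}^2$, whence $b((\pmb{\varphi},u),(\pmb{\varphi},u)) \lesssim \norm{u}_{H^1(\Omega)}^2 + \norm{\pmb{\varphi}}_{\HDiv}^2$.

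For the lower bound my strategy is to recover the $H^1(\Omega)$-norm of $u$ first and then recover $\pmb{\varphi}$ algebraically. I would abbreviate the residuals as $r := \nabla\cdot\pmb{\varphi} + \gamma u \in L^2(\Omega)$ and $\pmb{s} := \nabla u + \pmb{\varphi} \in \pmb{L}^2(\Omega)$, so that $b((\pmb{\varphi},u),(\pmb{\varphi},u)) = \norm{r}_{L^2(\Omega)}^2 + \norm{\pmb{s}}_{L^2(\Omega)}^2$. The crucial step, and the one place the boundary conditions enter, is an integration by parts: for $v \in \HDirOne$, writing $\nabla u = \pmb{s} - \pmb{\varphi}$ in $(\nabla u, \nabla v)_{L^2(\Omega)}$, integrating the term $-(\pmb{\varphi}, \nabla v)_{L^2(\Omega)}$ by parts, and inserting $\nabla\cdot\pmb{\varphi} = r - \gamma u$, I would arrive at
\begin{equation*}
  (\nabla u, \nabla v)_{L^2(\Omega)} + \gamma (u, v)_{L^2(\Omega)} = (\pmb{s}, \nabla v)_{L^2(\Omega)} + (r, v)_{L^2(\Omega)} \qquad \forall\, v \in \HDirOne .
\end{equation*}
Here the boundary pairing $\langle \pmb{\varphi}\cdot\pmb{n}, v\rangle_\Gamma$ produced by the integration by parts vanishes precisely because $v = 0$ on $\Gamma_D$ while $\pmb{\varphi}\cdot\pmb{n} = 0$ on $\Gamma_N$; that is, the boundary conditions encoded in $\HDirOne$ and $\HNeuDiv$ are exactly complementary.

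Testing this identity with $v = u$ and bounding the right-hand side by Cauchy--Schwarz and Young, the full-gradient and $\gamma$-weighted $L^2(\Omega)$ terms on the left absorb their counterparts, and I expect $\norm{u}_{H^1(\Omega)}^2 \lesssim \norm{r}_{L^2(\Omega)}^2 + \norm{\pmb{s}}_{L^2(\Omega)}^2$ with a constant depending only on $\gamma$; note that $\gamma > 0$ already supplies coercivity, so no Poincar\'e inequality is needed even in the pure Neumann case $\Gamma = \Gamma_N$. It then remains to recover $\pmb{\varphi}$, which is purely algebraic: $\pmb{\varphi} = \pmb{s} - \nabla u$ gives $\norm{\pmb{\varphi}}_{L^2(\Omega)} \le \norm{\pmb{s}}_{L^2(\Omega)} + \norm{u}_{H^1(\Omega)}$, and $\nabla\cdot\pmb{\varphi} = r - \gamma u$ gives $\norm{\nabla\cdot\pmb{\varphi}}_{L^2(\Omega)} \le \norm{r}_{L^2(\Omega)} + \gamma\norm{u}_{L^2(\Omega)}$. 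Substituting the bound on $\norm{u}_{H^1(\Omega)}$ yields $\norm{\pmb{\varphi}}_{\HDiv}^2 \lesssim \norm{r}_{L^2(\Omega)}^2 + \norm{\pmb{s}}_{L^2(\Omega)}^2 = b((\pmb{\varphi},u),(\pmb{\varphi},u))$, and summing the two estimates closes the lower bound.

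The main obstacle is thus confined to establishing the variational identity above: once one has verified that the complementary boundary conditions make the boundary pairing disappear, which is exactly what legitimizes testing with $u$, the remainder is the standard coercivity estimate for the scalar problem (guaranteed by $\gamma > 0$) followed by algebra. I would expect no difficulty beyond carefully tracking the $\gamma$-dependence of the constants.
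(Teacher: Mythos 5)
Your proposal is correct, but it takes a genuinely different and more elementary route than the paper. The paper proves the coercivity bound by splitting $(\pmb{\varphi},u)=(\pmb{\varphi}_1,u_1)+(\pmb{\varphi}_2,u_2)$, where each piece solves an auxiliary first-order system with data $(w,\pmb{0})$ and $(0,\pmb{\eta})$ respectively (in your notation $w=r$, $\pmb{\eta}=\pmb{s}$); it then reduces each piece to a scalar elliptic problem, invokes the Lax--Milgram \textsl{a priori} estimates $\norm{u_1}_{H^1(\Omega)}\lesssim\norm{w}_{L^2(\Omega)}$ and $\norm{u_2}_{H^1(\Omega)}\lesssim\norm{\pmb{\eta}}_{L^2(\Omega)}$, and recovers $\pmb{\varphi}_1,\pmb{\varphi}_2$ algebraically, which requires verifying that the constructed pairs really solve the auxiliary systems including the boundary conditions. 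You instead observe directly that $u$ itself satisfies the single variational identity $(\nabla u,\nabla v)_{\ltwo}+\gamma(u,v)_{\ltwo}=(\pmb{s},\nabla v)_{\ltwo}+(r,v)_{\ltwo}$ for all $v\in\HDirOne$ --- which is exactly the weak problem solved by the sum $u_1+u_2$ in the paper --- and then test with $v=u$; the boundary pairing vanishes for the same complementarity reason the paper uses when it verifies $\pmb{\varphi}_2\cdot\pmb{n}=0$. Your route collapses the paper's two Lax--Milgram applications and the uniqueness argument for the splitting into one Cauchy--Schwarz/Young step, and is therefore shorter and more self-contained; what it gives up is the explicit construction of the decomposition $(\pmb{\varphi}_i,u_i)$, which the paper reuses as a template for the duality arguments of Section~3 (the same elimination-and-shift-theorem pattern reappears in Theorems~\ref{theorem:duality_argument}--\ref{theorem:duality_argument_phi}). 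Both arguments yield constants depending only on $\gamma$, and in both the assumption $\gamma>0$ is what dispenses with a Poincar\'e inequality in the pure Neumann case. Your proof is complete as outlined.
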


\begin{proof}
  First note that by definition
  \begin{equation*}
    b( (\pmb{\varphi}, u), (\pmb{\varphi}, u) ) =
    \| \underbrace{ \nabla \cdot \pmb{\varphi} + \gamma u}_{\eqqcolon w} \|_{L^2(\Omega)}^2 + \| \underbrace{ \nabla u + \pmb{\varphi} }_{\eqqcolon \pmb{\eta}} \|_{L^2(\Omega)}^2,
  \end{equation*}
  from which the second inequality in \eqref{eq:norm_equivalence} is obvious.
  For the first inequality, we will now split $\pmb{\varphi}$ and $u$ as follows:
  \\
  \\
  \noindent\begin{minipage}{.5\linewidth}
  \begin{equation*}
    \begin{alignedat}{2}
      \nabla \cdot \pmb{\varphi}_1 + \gamma u_1 &= w \qquad  &&\text{in } \Omega,   \\
                   \nabla u_1 + \pmb{\varphi}_1 &= \pmb{0}   &&\text{in } \Omega,   \\
                                            u_1 &= 0         &&\text{on } \Gamma_D, \\
                  \pmb{\varphi}_1 \cdot \pmb{n} &= 0         &&\text{on } \Gamma_N,
    \end{alignedat}
  \end{equation*}
  \end{minipage}%
  \begin{minipage}{.5\linewidth}
    \begin{equation*}
      \begin{alignedat}{2}
        \nabla \cdot \pmb{\varphi}_2 + \gamma u_2 &= 0 \qquad   &&\text{in } \Omega,   \\
                     \nabla u_2 + \pmb{\varphi}_2 &= \pmb{\eta} &&\text{in } \Omega,   \\
                                              u_2 &= 0          &&\text{on } \Gamma_D, \\
                    \pmb{\varphi}_2 \cdot \pmb{n} &= 0          &&\text{on } \Gamma_N,
      \end{alignedat}
    \end{equation*}
  \end{minipage}
  \\
  \\
  with yet to be determined functions $\pmb{\varphi}_1$, $\pmb{\varphi}_2$, $u_1$ and $u_2$.
  We observe that $\pmb{\varphi} = \pmb{\varphi}_1 + \pmb{\varphi}_2$ and $u = u_1+u_2$ since
  the difference solves (\ref{eq:model_problem_first_order_system}) with zero right-hand side, which is only
  solved by the trivial solution.
  Simply eliminating $\pmb{\varphi}_1$ and $\pmb{\varphi}_2$ in the above equations, we expect $u_1$ and $u_2$ to be solutions to
  \\
  \\
  \noindent\begin{minipage}{.5\linewidth}
  \begin{equation*}
    \begin{alignedat}{2}
                       -\Delta u_1 + \gamma u_1 &= w \qquad  &&\text{in } \Omega,   \\
                                            u_1 &= 0         &&\text{on } \Gamma_D, \\
                                 \partial_n u_1 &= 0         &&\text{on } \Gamma_N,
    \end{alignedat}
  \end{equation*}
  \end{minipage}%
  \begin{minipage}{.5\linewidth}
    \begin{equation*}
      \begin{alignedat}{2}
                         -\Delta u_2 + \gamma u_2 &= - \nabla \cdot \pmb{\eta} \qquad  &&\text{in } \Omega,   \\
                                              u_2 &= 0                              &&\text{on } \Gamma_D, \\
                                   \partial_n u_2 &= 0                              &&\text{on } \Gamma_N,
      \end{alignedat}
    \end{equation*}
  \end{minipage}
  \\
  \\
  where $- \nabla \cdot \pmb{\eta}$ is to be understood as an element of $(\HDirOne)^\prime$ given by $F: v \mapsto (\pmb{\eta}, \nabla v)_{\ltwo}$.
  Both equations are therefore uniquely solvable.
  This then determines the desired functions $u_1$, $u_2$ and consequently the functions $\pmb{\varphi}_1$, $\pmb{\varphi}_2$, using the second equation in the first order systems.

  Let us show that $(\pmb{\varphi}_1,u_1)$ solves the above system. By construction it satisfies the differential equations and
  furthermore, since $\pmb{\varphi}_1 = - \nabla u_1$, we have by standard regularity theory
  $\pmb{\varphi}_1 \cdot \pmb{n} = - \nabla u_1 \cdot \pmb{n} = - \partial_n u_1 = 0$.

  Let us show that $(\pmb{\varphi}_2,u_2)$ satisfies the above system.
  Let $v \in C_0^\infty(\Omega)$ be arbitrary. Integration by parts and exploiting the weak formulation gives
  \begin{equation*}
    (\nabla \cdot \pmb{\varphi}_2, v)_{\ltwo}
    = -(\pmb{\varphi}_2, \nabla v)_{\ltwo}
    = -(\pmb{\eta}, \nabla v)_{\ltwo} + (\nabla u_2, \nabla v)_{\ltwo}
    = -(\gamma u_2, v)_{\ltwo}.
  \end{equation*}
  Therefore the $\div$-equation is satisfied.
  To verify the boundary conditions we calculate for any $v \in \HDirOne$
  \begin{align*}
    \langle \pmb{\varphi}_2 \cdot \pmb{n} , v \rangle_{H^{-1/2}(\Gamma) \times H^{1/2}(\Gamma)}
    &= (\pmb{\varphi}_2 , \nabla v)_{\ltwo} + (\nabla \cdot \pmb{\varphi}_2 , v)_{\ltwo} \\
    &= (- \nabla u_2 + \pmb{\eta} , \nabla v)_{\ltwo} + (\nabla \cdot \pmb{\varphi}_2 , v)_{\ltwo}  = 0,
  \end{align*}
  where we first used Green's theorem, then the equations of the first order system and at last the weak formulation for $u_2$.
  The \textit{a priori} estimate of the Lax-Milgram theorem gives
  \begin{align*}
    \norm{u_1}_{H^1(\Omega)} &\lesssim \norm{w}_{(\HDirOne)^\prime} \leq \norm{w}_{L^2(\Omega)}, \\
    \norm{u_2}_{H^1(\Omega)} &\lesssim \norm{F}_{(\HDirOne)^\prime} \leq \norm{\pmb{\eta}}_{L^2(\Omega)}.
  \end{align*}
  Due to the splitting $u = u_1 + u_2$ it is now obvious that
  \begin{equation*}
    \norm{u}_{H^1(\Omega)}^2 \lesssim \norm{w}_{L^2(\Omega)}^2 + \norm{\pmb{\eta}}_{L^2(\Omega)}^2.
  \end{equation*}
  We now estimate the $\HDiv$ norms of $\pmb{\varphi}_1$ and $\pmb{\varphi}_2$ as follows
  \begin{align*}
    \norm{ \pmb{\varphi}_1}_{ \HDiv }^2 &=
    \norm{ \pmb{\varphi}_1 }_{L^2(\Omega)}^2 + \norm{ \nabla \cdot \pmb{\varphi}_1 }_{L^2(\Omega)}^2 =
    \norm{ - \nabla u_1 }_{L^2(\Omega)}^2 + \norm{ w-\gamma u_1 }_{L^2(\Omega)}^2 \lesssim
    \norm{ w }_{L^2(\Omega)}^2,
    \\
    \norm{ \pmb{\varphi}_2}_{ \HDiv }^2 &=
    \norm{ \pmb{\varphi}_2 }_{L^2(\Omega)}^2 + \norm{ \nabla \cdot \pmb{\varphi}_2 }_{L^2(\Omega)}^2 =
    \norm{ \pmb{\eta} - \nabla u_2 }_{L^2(\Omega)}^2 + \norm{ - \gamma u_2  }_{L^2(\Omega)}^2 \lesssim
    \norm{ \pmb{\eta} }_{L^2(\Omega)}^2,
  \end{align*}
  which completes the proof.
\end{proof}

\begin{remark}
  Theorem~\ref{theorem:norm_equivalence} (norm equivalence) does not hold on all of $\pmb{H}(\Omega, \div) \times H^1(\Omega)$ since one can construct non-trivial solutions to the system
  \begin{equation*}
    \begin{alignedat}{2}
      \nabla \cdot \pmb{\varphi} + \gamma u &= 0 \qquad   &&\text{in } \Omega,   \\
                   \nabla u + \pmb{\varphi} &= \pmb{0}    &&\text{in } \Omega,
    \end{alignedat}
  \end{equation*}
  due to the missing boundary conditions, even though $\norm{(\pmb{\varphi}, u)}_b = 0$ by construction.
\eremk
\end{remark}

\begin{remark}\label{remark:weaker_coercivity_estimate}
  Theorem~\ref{theorem:norm_equivalence} (norm equivalence)
  is in fact much stronger than what we need to establish unique solvability of the system (\ref{eq:fosls_method_continuous}):
  The weaker coercivity estimate
$
\displaystyle     \norm{u}_{L^2(\Omega)}^2 + \norm{\pmb{\varphi}}_{L^2(\Omega)}^2 \lesssim b( (\pmb{\varphi}, u), (\pmb{\varphi}, u) )
$
  suffices to establish uniqueness.
\eremk
\end{remark}

\begin{remark}
  In the literature there are two main ideas for showing unique solvability when working in a least squares setting concerning a first order system derived from a second order equation:
  \begin{itemize}
    \item The first one deduces solvability from the second order equation and uses some weaker coercivity estimates to establish uniqueness,
as sketched in Remark~\ref{remark:weaker_coercivity_estimate}.  See also \cite{chen-qiu17, bernkopf-melenk19} for these kind of arguments for the Helmholtz equation.
    \item The second approach is to establish a stronger coercivity estimate as in Theorem~\ref{theorem:norm_equivalence} and directly apply the Lax-Milgram theorem to (\ref{eq:fosls_method_continuous}),
    where the right-hand side is a suitable continuous linear functional.
    See also \cite{cai-lazarov-manteufel-mccormick94, cai-manteufel-mccormick97} concerning the model problem in question and also \cite{cai-manteufel-mccormick97stokes} for the Stokes equation.
\eremk
  \end{itemize}

\end{remark}

\section{Duality argument}\label{section:duality_argument}

The current section is devoted to duality arguments
that are later used for the analysis of the $L^2(\Omega)$ norms of $u - u_h$, $\nabla (u - u_h)$, and  $\pmb{\varphi} - \pmb{\varphi}_h$.
Since these duality arguments rely heavily on the elliptic shift theorem, we restrict ourself to either the pure Neumann or Dirichlet boundary conditions,
i.e., $\Gamma = \Gamma_N$ or $\Gamma = \Gamma_D$ respectively.
In contrast, when considering mixed boundary conditions one has to expect a singularity at the interface between the Dirichlet and Neumann condition, which has
to be properly accounted for in the numerical analysis by graded meshes for both the primal and dual problem.
This is beyond the scope of the present work.
Our overall agenda is to derive regularity results for the dual solutions, always denoted by $(\pmb{\psi}, v)$.
For $w \in \HOne$ and $\eta \in \HZeroDiv$ we prove the existence of dual solutions such that:
\begin{itemize}
  \item $\norm{w}_{L^2(\Omega)}^2 = b( (\pmb{\varphi}, w), (\pmb{\psi}, v) )$, see Theorem~\ref{theorem:duality_argument},
  \item $\norm{\nabla w}_{L^2(\Omega)}^2 = b( (\pmb{\varphi}, w), (\pmb{\psi}, v) )$, see Theorem~\ref{theorem:duality_argument_grad_u},
  \item $\norm{\pmb{\eta}}_{L^2(\Omega)}^2 = b( (\pmb{\eta}, u), (\pmb{\psi}, v) )$, see Theorem~\ref{theorem:duality_argument_phi}.
\end{itemize}
These results are exploited in Section~\ref{section:error_analysis} with the special choices of $w = u - u_h$ and $\pmb{\eta} = \pmb{\varphi} - \pmb{\varphi}_h$, respectively.

\begin{theorem}[Duality argument for the scalar variable]\label{theorem:duality_argument}
  Let $\Gamma$ be smooth. Then there holds:
  \begin{enumerate}[label=(\roman*)]
    \item\label{item:duality_argument_neumann} For $\Gamma = \Gamma_N$ and any $(\pmb{\varphi}, w) \in \HZeroDiv \times \HOne$ there exists $(\pmb{\psi}, v) \in \HZeroDiv \times \HOne$
    such that $\norm{w}_{L^2(\Omega)}^2 = b( (\pmb{\varphi}, w), (\pmb{\psi}, v) )$.
    Furthermore, $\pmb{\psi} \in \pmb{H}^3(\Omega)$, $\nabla \cdot \pmb{\psi} \in H^2(\Omega)$, and $v \in H^2(\Omega)$.
    Additionally the following estimates hold:
    \begin{align*}
      \norm{v}_{H^2(\Omega)}                       &\lesssim \norm{w}_{L^2(\Omega)}, \\
      \norm{\pmb{\psi}}_{H^3(\Omega)}              &\lesssim \norm{w}_{L^2(\Omega)}, \\
      \norm{\nabla \cdot \pmb{\psi}}_{H^2(\Omega)} &\lesssim \norm{w}_{L^2(\Omega)}.
    \end{align*}
    \item\label{item:duality_argument_dirichlet} For $\Gamma = \Gamma_D$ and any $(\pmb{\varphi}, w) \in \HDiv \times \HZeroOne$ there exists $(\pmb{\psi}, v) \in \HDiv \times \HZeroOne$
    such that $\norm{w}_{L^2(\Omega)}^2 = b( (\pmb{\varphi}, w), (\pmb{\psi}, v) )$.
    The same regularity results and estimates as in \ref{item:duality_argument_neumann} hold.
  \end{enumerate}
\end{theorem}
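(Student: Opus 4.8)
The plan is to construct the dual pair $(\pmb{\psi}, v)$ by solving a short cascade of scalar reaction--diffusion problems, chosen so that the stronger identity $b((\pmb{\chi}, z), (\pmb{\psi}, v)) = (z, w)_{L^2(\Omega)}$ holds for \emph{every} admissible test pair $(\pmb{\chi}, z)$; specializing to $(\pmb{\chi}, z) = (\pmb{\varphi}, w)$ then yields $\norm{w}_{L^2(\Omega)}^2 = b((\pmb{\varphi}, w), (\pmb{\psi}, v))$. To find the defining equations, I set $s \coloneqq \nabla \cdot \pmb{\psi} + \gamma v$ and demand $\nabla v + \pmb{\psi} = \nabla s$. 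Substituting these into $b$, the $\pmb{\chi}$-dependent part collapses, after Green's identity, to a pure boundary term, while the $z$-dependent part reduces, modulo boundary terms, to $(z, -\Delta s + \gamma s)_{L^2(\Omega)}$. Hence it suffices that $s$ solve $-\Delta s + \gamma s = w$.

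Concretely, I would first define $s$ as the solution of $-\Delta s + \gamma s = w$ with the homogeneous Neumann condition $\partial_n s = 0$ in case \ref{item:duality_argument_neumann} and the homogeneous Dirichlet condition $s = 0$ in case \ref{item:duality_argument_dirichlet}. Since $\gamma > 0$ the operator is coercive and these problems are uniquely solvable with $s \in H^2(\Omega)$ and $\norm{s}_{H^2(\Omega)} \lesssim \norm{w}_{L^2(\Omega)}$ by the shift theorem. From $\pmb{\psi} = \nabla s - \nabla v$ together with $s = \nabla \cdot \pmb{\psi} + \gamma v$ I would eliminate $\pmb{\psi}$ and obtain that $v$ must solve $-\Delta v + \gamma v = (1-\gamma)s + w$, which I equip with the homogeneous boundary condition of the same type; its right-hand side lies in $L^2(\Omega)$, so $v \in H^2(\Omega)$ with $\norm{v}_{H^2(\Omega)} \lesssim \norm{w}_{L^2(\Omega)}$. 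Finally I set $\pmb{\psi} \coloneqq \nabla(s-v)$. The boundary conditions are precisely those needed for $(\pmb{\psi}, v)$ to lie in the required space: in the Neumann case $\pmb{\psi} \cdot \pmb{n} = \partial_n(s-v) = 0$, so $\pmb{\psi} \in \HZeroDiv$ and $v \in \HOne$, while in the Dirichlet case $v \in \HZeroOne$ and $\pmb{\psi} \in \HDiv$ holds automatically.

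The duality identity I would verify directly. Using $\nabla \cdot \pmb{\psi} + \gamma v = s$ and $\nabla v + \pmb{\psi} = \nabla s$, the bilinear form becomes $b((\pmb{\chi}, z), (\pmb{\psi}, v)) = (\nabla \cdot \pmb{\chi} + \gamma z, s)_{L^2(\Omega)} + (\nabla z + \pmb{\chi}, \nabla s)_{L^2(\Omega)}$. Two applications of Green's identity split this into interior terms plus the boundary contributions $\langle \pmb{\chi} \cdot \pmb{n}, s \rangle_\Gamma$ and $\langle z, \partial_n s \rangle_\Gamma$. In both boundary cases each of these vanishes (either the normal trace of $\pmb{\chi}$ and the trace of $z$ vanish, or $s$ and $\partial_n s$ vanish), leaving exactly $(z, -\Delta s + \gamma s)_{L^2(\Omega)} = (z, w)_{L^2(\Omega)}$, as desired.

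The main obstacle is the $H^3(\Omega)$ regularity of $\pmb{\psi}$ with a bound in terms of $\norm{w}_{L^2(\Omega)}$ only. Since $\pmb{\psi} = \nabla(s-v)$ and $s, v$ are a priori merely in $H^2(\Omega)$, the naive estimate yields only $\pmb{\psi} \in H^1(\Omega)$. The key observation is that subtracting the two scalar equations cancels the low-regularity datum $w$: the difference solves $-\Delta(s-v) + \gamma(s-v) = (\gamma - 1)s$, whose right-hand side already lies in $H^2(\Omega)$ with norm $\lesssim \norm{w}_{L^2(\Omega)}$. One further application of the shift theorem gives $s - v \in H^4(\Omega)$, hence $\pmb{\psi} = \nabla(s-v) \in H^3(\Omega)$ with $\norm{\pmb{\psi}}_{H^3(\Omega)} \lesssim \norm{w}_{L^2(\Omega)}$; the remaining estimate then follows from $\nabla \cdot \pmb{\psi} = s - \gamma v \in H^2(\Omega)$. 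The argument for case \ref{item:duality_argument_dirichlet} is identical, with Dirichlet in place of Neumann data in every elliptic solve.
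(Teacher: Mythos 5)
Your proposal is correct and follows essentially the same route as the paper's proof: your $s$ is the paper's auxiliary function $z$ solving $-\Delta z+\gamma z=w$, the equation $-\Delta v+\gamma v=w+(1-\gamma)z$ for $v$ is identical, $\pmb{\psi}=\nabla(s-v)$ is the same ansatz, and the subtraction trick giving $s-v\in H^4(\Omega)$ and hence $\pmb{\psi}\in\pmb{H}^3(\Omega)$ is exactly the paper's argument. The only (harmless) structural difference is that you build the dual pair constructively and verify the identity by Green's formula for all test pairs, whereas the paper first obtains $(\pmb{\psi},v)$ abstractly from the coercivity of $b$ (Theorem~\ref{theorem:norm_equivalence} plus Lax--Milgram) and then reads off the elliptic problems satisfied by the auxiliary functions.
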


\begin{proof}
We prove \ref{item:duality_argument_neumann}.
Theorem~\ref{theorem:norm_equivalence} give the existence of a unique
  $(\pmb{\psi}, v) \in  \HZeroDiv \times \HOne$ satisfying
  \begin{equation}
\label{eq:theorem:duality_argument-10}
    (u, w)_{\ltwo} = b( (\pmb{\varphi}, u), (\pmb{\psi}, v) ) ~~~ \forall \, (\pmb{\varphi}, u) \in  \HZeroDiv \times \HOne.
  \end{equation}
For the regularity assertions, we introduce the auxiliary functions $z$ and $\pmb{\mu}$ by
  \begin{equation}\label{eq:actual_dual_problem_u}
    \begin{alignedat}{2}
      \nabla \cdot \pmb{\psi} + \gamma v   &= z            \qquad  &\text{in } \Omega, \\
      \nabla v + \pmb{\psi}                &= \pmb{\mu}    \qquad  &\text{in } \Omega, \\
      \pmb{\psi} \cdot \pmb{n}             &= 0            \qquad  &\text{on } \Gamma.
    \end{alignedat}
  \end{equation}

  \textbf{Regularity properties of $z$ and $\pmb{\mu}$:}
  Regularity properties of $z$ are inferred from a scalar elliptic equation satisfied by $z$. To that end, we note that
  (\ref{eq:theorem:duality_argument-10}) is equivalent to
  \begin{equation}\label{eq:axiliary_actual_dual_problem_u}
    ( u, w)_{\ltwo} = (\nabla u + \pmb{\varphi}, \pmb{\mu})_{\ltwo} + (\nabla \cdot \pmb{\varphi} + \gamma u, z)_{\ltwo} ~~~ \forall \, (\pmb{\varphi}, u) \in  \HZeroDiv \times \HOne.
  \end{equation}
  For $u = 0$ and integrating by parts we find
  \begin{equation*}
    0 = (\pmb{\varphi}, \pmb{\mu})_{\ltwo} + (\nabla \cdot \pmb{\varphi}, z)_{\ltwo} = (\pmb{\varphi}, \pmb{\mu} - \nabla z)_{\ltwo}
\qquad \forall \pmb{\varphi} \in \HZeroDiv,
  \end{equation*}
  which gives $z \in H^1(\Omega)$ as well as $\pmb{\mu} = \nabla z$. Inserting $\pmb{\mu} = \nabla z$ and setting $\pmb{\varphi} = 0$
  in (\ref{eq:axiliary_actual_dual_problem_u}) we find
  \begin{equation*}
    ( u, w)_{\ltwo} = (\nabla u, \nabla z)_{\ltwo} + (\gamma u, z)_{\ltwo} ~~~ \forall \, u \in \HOne.
  \end{equation*}
  Therefore $z$ satisfies, in strong form,
  \begin{equation}\label{eq:auxiliary_dual_problem}
  \begin{alignedat}{2}
    - \Delta z + \gamma z &= w \quad   &&\text{in } \Omega,\\
             \partial_n z &= 0         &&\text{on } \Gamma,
  \end{alignedat}
  \end{equation}
  and the shift theorem immediately give $z \in H^2(\Omega)$ with the estimate $\norm{z}_{H^2(\Omega)} \lesssim \norm{w}_{L^2(\Omega)}$.

  \textbf{Regularity properties of $v$:}
  Eliminating $\pmb{\psi}$ in (\ref{eq:actual_dual_problem_u}), we discover that $v$ satisfies
  \begin{equation}\label{eq:auxiliary_dual_problem-v}
  \begin{alignedat}{2}
    - \Delta v + \gamma v &= w + (1-\gamma)z \quad   &&\text{in } \Omega,\\
             \partial_n v &= 0                       &&\text{on } \Gamma.
  \end{alignedat}
  \end{equation}
  By elliptic regularity $v \in H^2(\Omega)$ with the \textsl{a priori} estimate
  \begin{equation*}
    \norm{v}_{H^2(\Omega)} \lesssim \norm{w + (1-\gamma)z}_{L^2(\Omega)} \lesssim \norm{w}_{L^2(\Omega)}.
  \end{equation*}

  \textbf{Regularity properties of $\pmb{\psi}$:}
  Setting $\pmb{\psi} = \nabla (z-v)$, we have found the desired pair $(\pmb{\psi}, v) \in \HZeroDiv \times \HOne$.
  Since $\pmb{\psi} = \nabla (z-v)$ we first look at the regularity of $z-v$.
  Subtracting the equations (\ref{eq:auxiliary_dual_problem}), (\ref{eq:auxiliary_dual_problem-v})
  satisfied by $z$ and $v$ respectively we obtain
  \begin{equation*}
  \begin{alignedat}{2}
    - \Delta (z-v) + \gamma (z-v) &= (\gamma-1)z     \quad   &&\text{in } \Omega,\\
                 \partial_n (z-v) &= 0                       &&\text{on } \Gamma,
  \end{alignedat}
  \end{equation*}
  which gives $z-v \in H^4(\Omega)$ with the estimate
  \begin{equation*}
    \norm{z-v}_{H^4(\Omega)} \lesssim \norm{(\gamma-1)z}_{H^2(\Omega)} \lesssim \norm{w}_{L^2(\Omega)}.
  \end{equation*}
  We can therefore conclude
  \begin{equation*}
    \norm{\pmb{\psi}}_{H^3(\Omega)} = \norm{ \nabla (z-v) }_{H^3(\Omega)} \leq \norm{z-v}_{H^4(\Omega)} \lesssim \norm{w}_{L^2(\Omega)},
  \end{equation*}
  and since $\nabla \cdot \pmb{\psi} = z - \gamma v$, we have
  \begin{equation*}
    \norm{\nabla \cdot \pmb{\psi}}_{H^2(\Omega)} = \norm{z - \gamma v}_{H^2(\Omega)} \lesssim \norm{w}_{L^2(\Omega)},
  \end{equation*}
  which concludes the proof of \ref{item:duality_argument_neumann}.
  For the Dirichlet case \ref{item:duality_argument_dirichlet} the proof is completely analogous by replacing every Neumann boundary condition with a Dirichlet one.
\end{proof}

\begin{theorem}[Duality argument for the gradient of the scalar variable]\label{theorem:duality_argument_grad_u}
  Let $\Gamma$ be smooth. Then there holds:
  \begin{enumerate}[label=(\roman*)]
    \item\label{item:duality_argument_grad_u_neumann} For $\Gamma = \Gamma_N$ and any $(\pmb{\varphi}, w) \in \HZeroDiv \times \HOne$ there exists $(\pmb{\psi}, v) \in \HZeroDiv \times \HOne$
    such that $\norm{\nabla w}_{L^2(\Omega)}^2 = b( (\pmb{\varphi}, w), (\pmb{\psi}, v) )$.
    Furthermore, $\pmb{\psi} \in \pmb{H}^2(\Omega)$, $\nabla \cdot \pmb{\psi} \in H^1(\Omega)$, and $v \in H^1(\Omega)$.
    Additionally the following estimates hold:
    \begin{align*}
      \norm{v}_{H^1(\Omega)}                       &\lesssim \norm{\nabla w}_{L^2(\Omega)}, \\
      \norm{\pmb{\psi}}_{H^2(\Omega)}              &\lesssim \norm{\nabla w}_{L^2(\Omega)}, \\
      \norm{\nabla \cdot \pmb{\psi}}_{H^1(\Omega)} &\lesssim \norm{\nabla w}_{L^2(\Omega)}.
    \end{align*}
    \item\label{item:duality_argument_grad_u_dirichlet} For $\Gamma = \Gamma_D$ and any $(\pmb{\varphi}, w) \in \HDiv \times \HZeroOne$ there exists $(\pmb{\psi}, v) \in \HDiv \times \HZeroOne$
    such that  $\norm{\nabla w}_{L^2(\Omega)}^2 = b( (\pmb{\varphi}, w), (\pmb{\psi}, v) )$.
    The same regularity results and estimates as in \ref{item:duality_argument_grad_u_neumann} hold.
  \end{enumerate}
\end{theorem}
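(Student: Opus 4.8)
The plan is to mirror the proof of Theorem~\ref{theorem:duality_argument}, changing only the right-hand side of the defining variational problem and carefully tracking the resulting loss of one order of regularity. I prove \ref{item:duality_argument_grad_u_neumann}; the Dirichlet case \ref{item:duality_argument_grad_u_dirichlet} is identical after exchanging the boundary conditions.

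First I would observe that $(\pmb{\varphi}, u) \mapsto (\nabla u, \nabla w)_{\ltwo}$ is a bounded linear functional on $\HZeroDiv \times \HOne$, since $\abs{(\nabla u, \nabla w)_{\ltwo}} \leq \norm{\nabla u}_{L^2(\Omega)} \norm{\nabla w}_{L^2(\Omega)} \lesssim \norm{(\pmb{\varphi}, u)}_b \, \norm{\nabla w}_{L^2(\Omega)}$ by Theorem~\ref{theorem:norm_equivalence}. Hence the coercivity of $b$ (again Theorem~\ref{theorem:norm_equivalence}) together with Lax--Milgram furnishes a unique $(\pmb{\psi}, v) \in \HZeroDiv \times \HOne$ with $(\nabla u, \nabla w)_{\ltwo} = b( (\pmb{\varphi}, u), (\pmb{\psi}, v) )$ for all $(\pmb{\varphi}, u) \in \HZeroDiv \times \HOne$. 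Choosing the test pair $(\pmb{\varphi}, w)$ with the given $\pmb{\varphi}$ and $u = w$ then yields the asserted identity $\norm{\nabla w}_{L^2(\Omega)}^2 = b( (\pmb{\varphi}, w), (\pmb{\psi}, v) )$.

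For the regularity, I would introduce the auxiliary quantities $z \coloneqq \nabla \cdot \pmb{\psi} + \gamma v$ and $\pmb{\mu} \coloneqq \nabla v + \pmb{\psi}$ exactly as in \eqref{eq:actual_dual_problem_u}, and rewrite the variational identity as $(\nabla u, \nabla w)_{\ltwo} = (\nabla u + \pmb{\varphi}, \pmb{\mu})_{\ltwo} + (\nabla \cdot \pmb{\varphi} + \gamma u, z)_{\ltwo}$. Testing with $u = 0$ and integrating by parts gives, as before, $\pmb{\mu} = \nabla z$ and $z \in H^1(\Omega)$; testing with $\pmb{\varphi} = 0$ then shows that $z$ solves the weak problem $(\nabla u, \nabla z)_{\ltwo} + \gamma (u, z)_{\ltwo} = (\nabla u, \nabla w)_{\ltwo}$ for all $u \in \HOne$. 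The \emph{crucial difference} to Theorem~\ref{theorem:duality_argument} is that the data of this problem, the functional $u \mapsto (\nabla u, \nabla w)_{\ltwo}$, lies only in $(\HOne)'$ rather than being represented by an $L^2(\Omega)$ function, so the shift theorem no longer produces $z \in H^2(\Omega)$. Instead, testing with $u = z$ and using coercivity yields only $z \in H^1(\Omega)$ with $\norm{z}_{H^1(\Omega)} \lesssim \norm{\nabla w}_{L^2(\Omega)}$; this is exactly the one-order loss reflected in the statement.

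The final step recovers $\pmb{\psi}$ and $v$ from $z$. From $\pmb{\mu} = \nabla z$ I have $\pmb{\psi} = \nabla(z - v)$, and combining this with $\nabla \cdot \pmb{\psi} + \gamma v = z$ gives $\Delta(z - v) = z - \gamma v$, i.e.\ the scalar Neumann problem $-\Delta(z - v) + \gamma(z - v) = (\gamma - 1) z$ with $\partial_n(z - v) = \pmb{\psi} \cdot \pmb{n} = 0$. This is the key cancellation: the ill-defined $-\Delta w$ term that would appear in the separate equation for $v$ drops out, leaving $H^1(\Omega)$ data on the right-hand side. The elliptic shift theorem then gives $z - v \in H^3(\Omega)$ with $\norm{z - v}_{H^3(\Omega)} \lesssim \norm{z}_{H^1(\Omega)} \lesssim \norm{\nabla w}_{L^2(\Omega)}$, whence $\pmb{\psi} = \nabla(z - v) \in \pmb{H}^2(\Omega)$, $v = z - (z - v) \in H^1(\Omega)$, and $\nabla \cdot \pmb{\psi} = z - \gamma v \in H^1(\Omega)$, each with the claimed bound in $\norm{\nabla w}_{L^2(\Omega)}$. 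The main obstacle, and the only genuinely new point compared with Theorem~\ref{theorem:duality_argument}, is precisely the handling of this rougher right-hand side: one must resist forming the equation for $v$ alone (which contains $-\Delta w \notin L^2(\Omega)$) and instead work with the difference $z - v$, in which the offending term cancels.
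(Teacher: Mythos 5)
Your proposal is correct and follows essentially the same route as the paper's proof: Lax--Milgram via the norm equivalence for existence, the auxiliary functions $z$ and $\pmb{\mu}$, the identification $\pmb{\mu}=\nabla z$ with $z\in H^1(\Omega)$ solving the Neumann problem with right-hand side $u\mapsto(\nabla u,\nabla w)_{\ltwo}\in (H^1(\Omega))'$, and the shift theorem applied to the equation for $z-v$ (whose right-hand side $(\gamma-1)z$ you compute correctly; the paper's statement of it as $(1-\gamma)z$ is a harmless sign slip) to get $\pmb{\psi}=\nabla(z-v)\in\pmb{H}^2(\Omega)$. The only immaterial deviation is that you obtain the bound on $\norm{v}_{H^1(\Omega)}$ from $v=z-(z-v)$ and the triangle inequality, whereas the paper applies Lax--Milgram once more to the equation for $v$ with the distributional right-hand side $(1-\gamma)z-\nabla\cdot\nabla w$; both are valid.
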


\begin{proof}
We prove  \ref{item:duality_argument_grad_u_neumann}.
Theorem~\ref{theorem:norm_equivalence} give the existence of a unique
  $(\pmb{\psi}, v) \in  \HZeroDiv \times \HOne$ satisfying
  \begin{equation}
  \label{eq:actual_dual_problem_grad_u-10}
    (\nabla u, \nabla w)_{\ltwo} = b( (\pmb{\varphi}, u), (\pmb{\psi}, v) ) ~~~ \forall \, (\pmb{\varphi}, u) \in  \HZeroDiv \times \HOne.
  \end{equation}
For the regularity assertion, we introduce the auxiliary functions $z$ and $\pmb{\mu}$ by
  \begin{equation}\label{eq:actual_dual_problem_grad_u}
    \begin{alignedat}{2}
      \nabla \cdot \pmb{\psi} + \gamma v   &= z            \qquad  &\text{in } \Omega, \\
      \nabla v + \pmb{\psi}                &= \pmb{\mu}    \qquad  &\text{in } \Omega, \\
      \pmb{\psi} \cdot \pmb{n}             &= 0            \qquad  &\text{on } \Gamma.
    \end{alignedat}
  \end{equation}

\textbf{Regularity properties of $z$ and $\pmb{\mu}$:} We note that
  (\ref{eq:actual_dual_problem_grad_u-10}) is equivalent to
  \begin{equation}\label{eq:axiliary_actual_dual_problem_grad_u}
    (\nabla u, \nabla w)_{\ltwo} = (\nabla u + \pmb{\varphi}, \pmb{\mu})_{\ltwo} + (\nabla \cdot \pmb{\varphi} + \gamma u, z)_{\ltwo} ~~~ \forall \, (\pmb{\varphi}, u) \in  \HZeroDiv \times \HOne.
  \end{equation}
  For $u = 0$ and integrating by parts we find
  \begin{equation*}
    0 = (\pmb{\varphi}, \pmb{\mu})_{\ltwo} + (\nabla \cdot \pmb{\varphi}, z)_{\ltwo} = (\pmb{\varphi}, \pmb{\mu} - \nabla z)_{\ltwo}
  \end{equation*}
  which gives $\pmb{\mu} = \nabla z$. Inserting $\pmb{\mu} = \nabla z$ and setting $\pmb{\varphi} = 0$ in (\ref{eq:axiliary_actual_dual_problem_grad_u})
we find
  \begin{equation*}
    (\nabla u, \nabla w)_{\ltwo} = (\nabla u, \nabla z)_{\ltwo} + (\gamma u, z)_{\ltwo} ~~~ \forall \, u \in \HOne,
  \end{equation*}
  which can be solved for $z \in \HOne$ with the \textsl{a priori} estimate $\norm{z}_{H^1(\Omega)} \lesssim \norm{\nabla w}_{L^2(\Omega)}$.
  Formally, $z$ satisfies
  \begin{equation}\label{eq:auxiliary_dual_problem_grad_u}
  \begin{alignedat}{2}
    - \Delta z + \gamma z &= - \nabla \cdot \nabla w \quad   &&\text{in } \Omega,\\
             \partial_n z &= 0                               &&\text{on } \Gamma.
  \end{alignedat}
  \end{equation}
  where $- \nabla \cdot \nabla w \in (H^1(\Omega))^\prime$ is to be understood as the mapping $ u  \mapsto (\nabla u, \nabla w)_{\ltwo}$.

  \textbf{Regularity of $v$:} Eliminating $\pmb{\psi}$ from (\ref{eq:actual_dual_problem_grad_u}) and using
$\pmb{\mu} = \nabla z$, we discover that $v$ satisfies
  \begin{equation*}
  \begin{alignedat}{2}
    - \Delta v + \gamma v &= (1-\gamma)z - \nabla \cdot \nabla w \quad   &&\text{in } \Omega,\\
             \partial_n v &= 0                                           &&\text{on } \Gamma,
  \end{alignedat}
  \end{equation*}
  By the Lax-Milgram theorem we find that $v \in H^1(\Omega)$ as well as
  \begin{equation*}
    \norm{v}_{H^1(\Omega)} \lesssim \norm{(1-\gamma)z - \nabla \cdot \nabla w}_{(H^1(\Omega))^\prime} \lesssim \norm{\nabla w}_{L^2(\Omega)}.
  \end{equation*}

  \textbf{Regularity of $\pmb{\psi}$:}
  Upon setting $\pmb{\psi} = \nabla (z-v)$, we have found the solution $(\pmb{\psi}, v) \in \HZeroDiv \times \HOne$ of
  (\ref{eq:actual_dual_problem_grad_u-10}).
  To prove the estimates and regularity results for $\pmb{\psi}$ first note that
  \begin{equation*}
  \begin{alignedat}{2}
    - \Delta (z-v) + \gamma (z-v) &= (1-\gamma)z \quad   &&\text{in } \Omega,\\
                 \partial_n (z-v) &= 0                   &&\text{on } \Gamma,
  \end{alignedat}
  \end{equation*}
  and therefore by elliptic regularity $z-v \in H^3(\Omega)$ with the estimate $\norm{z-v}_{H^3(\Omega)} \lesssim \norm{(1-\gamma)z}_{H^1(\Omega)} \lesssim \norm{\nabla w}_{L^2(\Omega)}$.
  Finally since $\pmb{\psi} = \nabla (z-v)$ the regularity assertion for $\pmb{\psi} \in \pmb{H}^2(\Omega)$ follows.
  For the Dirichlet case \ref{item:duality_argument_grad_u_dirichlet} the proof is completely analogous by replacing every Neumann boundary condition with a Dirichlet one.
\end{proof}

\begin{theorem}[Duality argument for the vector valued variable]\label{theorem:duality_argument_phi}
  Let $\Gamma$ be smooth. Then there holds:
  \begin{enumerate}[label=(\roman*)]
    \item\label{item:duality_argument_phi_neumann} For $\Gamma = \Gamma_N$ and any $(\pmb{\eta}, u) \in \HZeroDiv \times \HOne$ there exists $(\pmb{\psi}, v) \in \HZeroDiv \times \HOne$
    such that $\norm{\pmb{\eta}}_{L^2(\Omega)}^2 = b( (\pmb{\eta}, u), (\pmb{\psi}, v) )$.
    Furthermore, $\pmb{\psi} \in \pmb{L}^2(\Omega)$, $\nabla \cdot \pmb{\psi} \in H^1(\Omega)$ and $v \in H^3(\Omega)$.
    Additionally the following estimates hold:
    \begin{align*}
      \norm{v}_{H^3(\Omega)}                       &\lesssim \norm{\pmb{\eta}}_{L^2(\Omega)}, \\
      \norm{\pmb{\psi}}_{L^2(\Omega)}              &\lesssim \norm{\pmb{\eta}}_{L^2(\Omega)}, \\
      \norm{\nabla \cdot \pmb{\psi}}_{H^1(\Omega)} &\lesssim \norm{\pmb{\eta}}_{L^2(\Omega)}.
    \end{align*}
    \item\label{item:duality_argument_phi_dirichlet} For $\Gamma = \Gamma_D$ and any $(\pmb{\eta}, u) \in \HDiv \times \HZeroOne$ there exists $(\pmb{\psi}, v) \in \HDiv \times \HZeroOne$
    such that  $\norm{\pmb{\eta}}_{L^2(\Omega)}^2 = b( (\pmb{\eta}, u), (\pmb{\psi}, v) )$.
    The same regularity results and estimates as in \ref{item:duality_argument_phi_neumann} hold.
  \end{enumerate}
\end{theorem}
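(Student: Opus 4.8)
The plan is to mirror the structure of the proofs of Theorems~\ref{theorem:duality_argument} and \ref{theorem:duality_argument_grad_u}, the only new feature being a reduced regularity budget that has to be tracked carefully. I treat the Neumann case \ref{item:duality_argument_phi_neumann}; the Dirichlet case \ref{item:duality_argument_phi_dirichlet} then follows verbatim after exchanging the boundary conditions. Since $b$ is coercive and continuous on $\HZeroDiv \times \HOne$ by Theorem~\ref{theorem:norm_equivalence}, the Lax--Milgram theorem produces a unique $(\pmb{\psi},v) \in \HZeroDiv \times \HOne$ representing the continuous functional $(\pmb{\varphi},u) \mapsto (\pmb{\varphi}, \pmb{\eta})_{\ltwo}$, i.e.
\begin{equation*}
  (\pmb{\varphi}, \pmb{\eta})_{\ltwo} = b((\pmb{\varphi},u),(\pmb{\psi},v)) \qquad \forall\, (\pmb{\varphi},u) \in \HZeroDiv \times \HOne .
\end{equation*}
Choosing $\pmb{\varphi} = \pmb{\eta}$ together with the given $u$, and noting that the functional does not depend on $u$, yields the claimed identity $\norm{\pmb{\eta}}_{L^2(\Omega)}^2 = b((\pmb{\eta},u),(\pmb{\psi},v))$. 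As before I introduce auxiliary functions $z,\pmb{\mu}$ via $\nabla\cdot\pmb{\psi} + \gamma v = z$ and $\nabla v + \pmb{\psi} = \pmb{\mu}$, so that the representation identity becomes $(\pmb{\varphi},\pmb{\eta})_{\ltwo} = (\nabla\cdot\pmb{\varphi} + \gamma u, z)_{\ltwo} + (\nabla u + \pmb{\varphi}, \pmb{\mu})_{\ltwo}$.

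Next I extract $z$ and $\pmb{\mu}$. Testing with $u = 0$ gives $(\nabla\cdot\pmb{\varphi}, z)_{\ltwo} = (\pmb{\varphi}, \pmb{\eta} - \pmb{\mu})_{\ltwo}$ for all $\pmb{\varphi} \in \HZeroDiv$; restricting to smooth compactly supported $\pmb{\varphi}$ identifies the distributional gradient $\nabla z = \pmb{\mu} - \pmb{\eta} \in \pmb{L}^2(\Omega)$, whence $z \in H^1(\Omega)$ and $\pmb{\mu} = \nabla z + \pmb{\eta}$. Testing instead with $\pmb{\varphi} = 0$ and inserting $\pmb{\mu} = \nabla z + \pmb{\eta}$ produces $(\nabla u, \nabla z)_{\ltwo} + \gamma(u,z)_{\ltwo} = -(\nabla u, \pmb{\eta})_{\ltwo}$ for all $u \in \HOne$, i.e. $z$ solves $-\Delta z + \gamma z = \nabla\cdot\pmb{\eta}$ in $\Omega$ with $\partial_n z = 0$ on $\Gamma$; here the constraint $\pmb{\eta}\cdot\pmb{n} = 0$ built into $\HZeroDiv$ ensures no spurious boundary term and a homogeneous natural Neumann condition. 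The decisive point is that I do \emph{not} move the derivative onto $\pmb{\eta}$: the right-hand side is kept as the functional $u \mapsto -(\nabla u, \pmb{\eta})_{\ltwo}$, whose $(H^1(\Omega))^\prime$ norm is bounded by $\norm{\pmb{\eta}}_{L^2(\Omega)}$, so Lax--Milgram yields only $z \in H^1(\Omega)$ with $\norm{z}_{H^1(\Omega)} \lesssim \norm{\pmb{\eta}}_{L^2(\Omega)}$.

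Eliminating $\pmb{\psi}$ through $\pmb{\psi} = \pmb{\mu} - \nabla v = \nabla z + \pmb{\eta} - \nabla v$ and using the $z$-equation to cancel the two occurrences of $\nabla\cdot\pmb{\eta}$, I find that $v$ solves $-\Delta v + \gamma v = (1-\gamma)z$ in $\Omega$ with $\partial_n v = 0$ on $\Gamma$; this cancellation is precisely what turns an a priori $(H^1(\Omega))^\prime$ datum into the genuine $H^1(\Omega)$ function $(1-\gamma)z$. The elliptic shift theorem on the smooth domain then gains two orders, giving $v \in H^3(\Omega)$ with $\norm{v}_{H^3(\Omega)} \lesssim \norm{z}_{H^1(\Omega)} \lesssim \norm{\pmb{\eta}}_{L^2(\Omega)}$. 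Finally $\pmb{\psi} = \nabla z + \pmb{\eta} - \nabla v \in \pmb{L}^2(\Omega)$ with $\norm{\pmb{\psi}}_{L^2(\Omega)} \lesssim \norm{\pmb{\eta}}_{L^2(\Omega)}$, while the identity $\nabla\cdot\pmb{\psi} = z - \gamma v$ shows $\nabla\cdot\pmb{\psi} \in H^1(\Omega)$ with $\norm{\nabla\cdot\pmb{\psi}}_{H^1(\Omega)} \lesssim \norm{\pmb{\eta}}_{L^2(\Omega)}$, which exhausts all the stated estimates.

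The step I expect to be the crux is the regularity bookkeeping in the middle paragraph. Unlike the scalar duality argument, where the $L^2$ datum $w$ placed $z$ in $H^2(\Omega)$, here $\pmb{\eta}$ enters the $z$-equation only through $\nabla\cdot\pmb{\eta}$, a negative-order object that must be measured by $\norm{\pmb{\eta}}_{L^2(\Omega)}$ and not by $\norm{\nabla\cdot\pmb{\eta}}_{L^2(\Omega)}$. Resisting the tempting integration by parts caps $z$ at $H^1(\Omega)$, and this single lost derivative propagates to exactly the announced regularities $v \in H^3(\Omega)$, $\nabla\cdot\pmb{\psi}\in H^1(\Omega)$, and merely $\pmb{\psi} \in \pmb{L}^2(\Omega)$; the one place where the reduced smoothness becomes visible is the $\pmb{\eta}$ summand of $\pmb{\psi}$, which forbids any gain beyond $\pmb{L}^2(\Omega)$.
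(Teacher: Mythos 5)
Your proposal is correct and follows essentially the same route as the paper's proof: Lax--Milgram via the norm equivalence, the auxiliary pair $(z,\pmb{\mu})$ with $\pmb{\mu}=\pmb{\eta}+\nabla z$, the Neumann problem for $z$ with right-hand side kept as the functional $u\mapsto -(\nabla u,\pmb{\eta})_{\ltwo}$ so that $z$ only reaches $H^1(\Omega)$ with bound $\norm{\pmb{\eta}}_{L^2(\Omega)}$, and the elimination yielding $-\Delta v+\gamma v=(1-\gamma)z$ and hence $v\in H^3(\Omega)$, $\pmb{\psi}=\pmb{\eta}+\nabla(z-v)\in\pmb{L}^2(\Omega)$, $\nabla\cdot\pmb{\psi}=z-\gamma v\in H^1(\Omega)$. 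The crux you identify (not integrating by parts onto $\pmb{\eta}$) is exactly the point the paper relies on.
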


\begin{proof}
    We prove \ref{item:duality_argument_phi_neumann}.
Theorem~\ref{theorem:norm_equivalence} give the existence of a unique
  $(\pmb{\psi}, v) \in  \HZeroDiv \times \HOne$ such that
  \begin{equation}
  \label{eq:theorem:duality_argument_phi-10}
    (\pmb{\varphi}, \pmb{\eta})_{\ltwo} = b( (\pmb{\varphi}, u), (\pmb{\psi}, v) ) ~~~ \forall \, (\pmb{\varphi}, u) \in  \HZeroDiv \times \HOne.
  \end{equation}
  For the regularity assertions, we introduce the auxiliary
  functions $z$ and $\pmb{\mu}$ by
  \begin{equation}\label{eq:actual_dual_problem_phi}
    \begin{alignedat}{2}
      \nabla \cdot \pmb{\psi} + \gamma v   &= z            \qquad  &\text{in } \Omega, \\
      \nabla v + \pmb{\psi}                &= \pmb{\mu}    \qquad  &\text{in } \Omega, \\
      \pmb{\psi} \cdot \pmb{n}             &= 0            \qquad  &\text{on } \Gamma.
    \end{alignedat}
  \end{equation}

  \textbf{Regularity of $z$ and $\pmb{\mu}$}:
  (\ref{eq:theorem:duality_argument_phi-10})
  is equivalent to
  \begin{equation}\label{eq:axiliary_actual_dual_problem_phi}
    (\pmb{\varphi}, \pmb{\eta})_{\ltwo} = (\nabla u + \pmb{\varphi}, \pmb{\mu})_{\ltwo} + (\nabla \cdot \pmb{\varphi} + \gamma u, z)_{\ltwo} ~~~ \forall \, (\pmb{\varphi}, u) \in  \HZeroDiv \times \HOne.
  \end{equation}
  For $u = 0$ and integrating by parts we find
  \begin{equation*}
    (\pmb{\varphi}, \pmb{\eta})_{\ltwo} = (\pmb{\varphi}, \pmb{\mu})_{\ltwo} + (\nabla \cdot \pmb{\varphi}, z)_{\ltwo} = (\pmb{\varphi}, \pmb{\mu} - \nabla z)_{\ltwo}
  \end{equation*}
  which gives $\pmb{\mu} - \nabla z = \pmb{\eta}$. Inserting $\pmb{\mu} = \pmb{\eta} + \nabla z$ and setting $\pmb{\varphi} = 0$ in
(\ref{eq:theorem:duality_argument_phi-10}) we find
  \begin{equation*}
    0 = (\nabla u, \pmb{\eta} + \nabla z)_{\ltwo} + (\gamma u, z)_{\ltwo} ~~~ \forall \, u \in \HOne.
  \end{equation*}
Hence, with the understanding
  that $\nabla \cdot \pmb{\eta}$ means $u \mapsto (\nabla u, \pmb{\eta})$, the function $z$ solves
  \begin{equation}\label{eq:auxiliary_dual_problem_phi}
  \begin{alignedat}{2}
    - \Delta z + \gamma z &= \nabla \cdot \pmb{\eta} \quad   &&\text{in } \Omega,\\
             \partial_n z &= 0                               &&\text{on } \Gamma.
  \end{alignedat}
  \end{equation}
Thus,  $z \in \HOne$ and setting $\pmb{\mu} = \pmb{\eta} + \nabla z$ we find (\ref{eq:axiliary_actual_dual_problem_phi}) to be satisfied.
  Furthermore, note that
  \begin{equation*}
    \norm{z}_{H^1(\Omega)} \lesssim \norm{\nabla \cdot \eta}_{(H^1(\Omega))^\prime} \leq \norm{\eta}_{L^2(\Omega)},
  \end{equation*}
  where the last inequality following from integration by parts and exploiting the boundary condition $\eta \in \HZeroDiv$.

  \textbf{Regularity of $v$:}
  By eliminating $\pmb{\psi}$ we find that $v$ solves
  \begin{equation*}
  \begin{alignedat}{2}
    - \Delta v + \gamma v &= (1-\gamma)z \quad   &&\text{in } \Omega,\\
             \partial_n v &= 0                       &&\text{on } \Gamma,
  \end{alignedat}
  \end{equation*}
  Again by elliptic regularity we find that $v \in H^3(\Omega)$ as well as
  \begin{equation*}
    \norm{v}_{H^3(\Omega)} \lesssim \norm{(1-\gamma)z}_{H^1(\Omega)} \lesssim \norm{\eta}_{L^2(\Omega)}.
  \end{equation*}

  \textbf{Regularity of $\pmb{\psi}$:}
  We have $\pmb{\psi} = \pmb{\eta} + \nabla (z-v)$, and the regularity of $\pmb{\psi}$ follows from that of $z$ of $v$.
  For the Dirichlet case \ref{item:duality_argument_phi_dirichlet} the proof is completely analogous by replacing every Neumann boundary condition with a Dirichlet one.
\end{proof}

\section{Error analysis}\label{section:error_analysis}

The goal of the present section is to establish optimal convergence rates for an $hp$ version of the FOSLS method for the scalar variable, the gradient of the scalar variable as well as the vector variable, all measured in the $L^2(\Omega)$ norm, as long as the polynomial degree of the other variable is chosen appropriately.
\subsection{Notation, assumptions, and road map of the current section}
Throughout we denote by $( \pmb{\varphi}_h, u_h )$ the least squares approximation of $( \pmb{\varphi} , u )$.
Furthermore, let $e^u = u-u_h$ and $\pmb{e}^{\pmb{\varphi}} = \pmb{\varphi}-\pmb{\varphi}_h$ denote the corresponding error terms.
For simplicity we also assume $\Gamma = \Gamma_N$, i.e., $\Gamma_D = \emptyset$.
Furthermore, $p$ will denote the minimum of the two polynomial degrees $p_s$ and $p_v$, i.e., $p = \min(p_s, p_v)$.
The overall agenda of the present section is as follows:
\begin{enumerate}
  \item
  We start off by proving \cite[Lemma~{3.4}]{bochev-gunzburger05} in an $hp$ setting using our duality argument, i.e., the (in our sense) suboptimal $L^2(\Omega)$ estimate
  \begin{equation*}
    \norm{e^u}_{L^2(\Omega)} \lesssim h/p \norm{( \pmb{e}^{\pmb{\varphi}} , e^u )}_b.
  \end{equation*}
  This is done in Lemma~\ref{lemma:e_u_suboptimal_l2_error_estimate}.
  In Remark~\ref{remark:heuristic_arguments} we present heuristic arguments that suggest the possibility of optimal $L^2(\Omega)$ convergence rates.
  These arguments suggest to construct an $\HZeroDiv$ conforming approximation operator $\IhZero$ with additional orthogonality properties.
  \item
  In Lemma~\ref{lemma:Ih_well_defined} we prove that the operator $\IhZero$ is in fact well defined.
  As a tool of independent interest we derive certain continuous and discrete Helmholtz decompositions in Lemmata~\ref{lemma:helmholtz_decomp} and \ref{lemma:helmholtz_decomp_zero}.
  These decompositions are then used in Lemma~\ref{lemma:properties_of_Ih} to analyze the $L^2(\Omega)$ error of the operator $\IhZero$.
  \item
  Next we prove an $hp$ version of \cite[Lemma~{3.6}]{bochev-gunzburger05} (an $h$ analysis of $\pmb{e}^{\pmb{\varphi}}$ in the $L^2(\Omega)$ norm).
  \item
  In Theorem~\ref{theorem:grad_e_u_optimal_l2_error_estimate} we exploit the results of Lemma~\ref{lemma:convergence_of_dual_solution_grad_u}, which analyzes the convergence rate of the FOSLS approximation of the dual solution for the gradient of the scalar variable, in order to prove new optimal $L^2(\Omega)$ error estimates for $\nabla e^u$.
  \item
  We analyze the convergence rate of the FOSLS approximation of the dual solution in various norms in
  Lemma~\ref{lemma:convergence_of_dual_solution}.
  Finally we prove our main result, Theorem~\ref{theorem:e_u_optimal_l2_error_estimate}, which analyzes the convergence of $e^u$ in the $L^2(\Omega)$ norm.
  \item
  Closing this section we derive Corollary~\ref{corollary:summary_of_estimates_for_f_in_higher_order_sobolev_space},
  which summarizes the results for general right-hand side $f \in H^s(\Omega)$,
  by exploiting the estimates given by the Theorems~\ref{theorem:e_phi_optimal_l2_error_estimate}, \ref{theorem:grad_e_u_optimal_l2_error_estimate} and \ref{theorem:e_u_optimal_l2_error_estimate}
  together with the approximation properties of the employed finite element spaces.
\end{enumerate}

Since we are dealing with smooth boundaries we employ curved elements.
We make the following assumptions on the triangulation.
\begin{assumption}[quasi-uniform regular meshes]\label{assumption:quasi_uniform_regular_meshes}
	Let $\widehat{K}$ be the reference simplex.
  Each element map $F_K \colon \widehat{K} \to K$ can be written as $F_K = R_K \circ A_K$, where $A_K$ is an affine map and the maps $R_K$ and $A_K$ satisfy, for constants $C_\mathrm{affine}, C_\mathrm{metric}, \rho > 0$ independent of $K$:
  \begin{equation*}
  	\begin{alignedat}{2}
        &\norm{A^\prime_K}_{L^\infty( \widehat{K} )}        \leq C_\mathrm{affine} h_K, \qquad &&\norm{ (A^\prime_K)^{-1} }_{L^\infty( \widehat{K} )} \leq C_\mathrm{affine} h^{-1}_K, \\
        &\norm{ (R^\prime_K)^{-1} }_{L^\infty( \tilde{K} )} \leq C_\mathrm{metric},     \qquad &&\norm{ \nabla^n R_K }_{L^\infty( \tilde{K} )} \leq C_\mathrm{metric} \rho^n n! \qquad \forall n \in \mathbb{N}_0.
  	\end{alignedat}
  \end{equation*}
  Here, $\tilde{K} = A_K(\widehat{K})$ and $h_K > 0$ denotes the element diameter.
\end{assumption}

On the reference element $\widehat{K}$ we introduce the Raviart-Thomas and Brezzi-Douglas-Marini elements:
\begin{align*}
  \mathcal{P}_{p}(\widehat{K}) &\coloneqq \mathrm{span}\left\{ \pmb{x}^{\pmb \alpha} \colon |\pmb{\alpha}| \leq p \right\}, \\
	\pmb{\mathrm{BDM}}_{p}(\widehat{K}) &\coloneqq \mathcal{P}_{p}(\widehat{K})^d, \\
  \pmb{\mathrm{RT}}_{p-1}(\widehat{K}) &\coloneqq \left\{ \pmb{p} + \pmb{x}q \colon \pmb{p} \in \mathcal{P}_{p-1}(\widehat{K})^d, q \in \mathcal{P}_{p-1}(\widehat{K}) \right\}.
\end{align*}
Note that trivially $\pmb{\mathrm{RT}}_{p-1}(\widehat{K}) \subset \pmb{\mathrm{BDM}}_p(\widehat{K}) \subset \pmb{\mathrm{RT}}_p(\widehat{K})$.
We also recall the classical Piola transformation, which is the appropriate change of variables for $\HDiv$.
For a function $\pmb{\varphi} : K \to \mathbb{R}^d$ and the element map $F_K \colon \widehat{K} \to K$
its Piola transform $\widehat{\pmb{\varphi}} : \widehat{K} \to \mathbb{R}^d$ is given by
\begin{equation*}
	\widehat{\pmb{\varphi}} = (\det F_K^\prime ) (F_K^\prime)^{-1} \pmb{\varphi} \circ F_K.
\end{equation*}
The spaces $S_p(\mathcal{T}_h)$, $\pmb{\mathrm{BDM}}_p(\mathcal{T}_h)$, and $\pmb{\mathrm{RT}}_{p-1}(\mathcal{T}_h)$ are given by standard transformation and (contravariant) Piola transformation of functions on the reference element:
\begin{align*}
  S_{p}(\mathcal{T}_h) &\coloneqq \left\{ u \in H^1(\Omega) \colon \left.\kern-\nulldelimiterspace{u}\vphantom{\big|} \right|_{K} \circ F_K \in \mathcal{P}_{p}(\widehat{K}) \text{ for all } K \in \mathcal{T}_h \right\}, \\
	\pmb{\mathrm{BDM}}_p(\mathcal{T}_h) &\coloneqq \left\{ \pmb{\varphi} \in \pmb{H}(\operatorname{div}, \Omega) \colon (\det F_K^\prime) (F_K^\prime)^{-1} \left.\kern-\nulldelimiterspace{\pmb{\varphi}}\vphantom{\big|} \right|_{K} \circ F_K \in \pmb{\mathrm{BDM}}_p(\widehat{K}) \text{ for all } K \in \mathcal{T}_h \right\}, \\
  \pmb{\mathrm{RT}}_{p-1}(\mathcal{T}_h) &\coloneqq \left\{ \pmb{\varphi} \in \pmb{H}(\operatorname{div}, \Omega) \colon (\det F_K^\prime) (F_K^\prime)^{-1} \left.\kern-\nulldelimiterspace{\pmb{\varphi}}\vphantom{\big|} \right|_{K} \circ F_K \in \pmb{\mathrm{RT}}_{p-1}(\widehat{K}) \text{ for all } K \in \mathcal{T}_h \right\}.
\end{align*}

For the approximation properties of the $\HDiv$ conforming finite element spaces see \cite[Proposition~{2.5.4}]{boffi-brezzi-fortin13} as a standard reference for non-curved elements and without the $p$-aspect. For an analysis of the $hp$-version under Assumption~\ref{assumption:quasi_uniform_regular_meshes}
we refer to \cite[Section~{4}]{bernkopf-melenk19}.
\subsection{The standard duality argument}
Before formulating various duality arguments,
we recall that the conforming least squares approximation $(\pmb{\varphi}_h,u_h)$
is the best approximation in the $\|\cdot\|_b$ norm:
\begin{equation}
\label{eq:cea-lemma}
\|(\pmb{\varphi} - \pmb{\varphi}_h,u - u_h)\|_{b}  = \min_{\substack{\tilde{u}_h \in \Sp , \\ \tilde{\pmb{\varphi}}_h \in \RTBDMZero}} \| (\pmb{\varphi} - \tilde{\pmb{\varphi}}_h , u - \tilde{u}_h )\|_{b}.
\end{equation}
\begin{lemma}\label{lemma:e_u_suboptimal_l2_error_estimate}
  Let $\Gamma$ be smooth and $( \pmb{\varphi}_h , u_h )$ be the least squares approximation of $( \pmb{\varphi} , u )$. Furthermore, let  $e^u = u-u_h$ and $\pmb{e}^{\pmb{\varphi}} = \pmb{\varphi}-\pmb{\varphi}_h$. Then,
  for any $\tilde{u}_h \in \Sp$, $\tilde{\pmb{\varphi}}_h \in \RTBDMZero$,
  \begin{align*}
    \norm{e^u}_{L^2(\Omega)}
    &\lesssim  \frac{h}{p}  \| ( \pmb{e}^{\pmb{\varphi}} , e^u ) \|_b \\
    &\lesssim \frac{h}{p}  \norm{u - \tilde{u}_h}_{H^1(\Omega)}
      + \frac{h}{p}  \norm{\pmb{\varphi} - \tilde{\pmb{\varphi}}_h}_{L^2(\Omega)}
      + \frac{h}{p}  \norm{\nabla \cdot (\pmb{\varphi} - \tilde{\pmb{\varphi}}_h)}_{L^2(\Omega)}.
  \end{align*}
\end{lemma}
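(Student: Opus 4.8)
The plan is to establish the first inequality via the duality argument of Theorem~\ref{theorem:duality_argument}\ref{item:duality_argument_neumann} and then reduce the second inequality to the best-approximation property \eqref{eq:cea-lemma} combined with standard approximation theory.

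First I would invoke Theorem~\ref{theorem:duality_argument} with the special choice $w = e^u = u - u_h$ (which lies in $\HOne$). This provides a dual pair $(\pmb{\psi}, v) \in \HZeroDiv \times \HOne$ satisfying $\norm{e^u}_{L^2(\Omega)}^2 = b( (\pmb{e}^{\pmb{\varphi}}, e^u), (\pmb{\psi}, v) )$, together with the regularity bounds $\norm{v}_{H^2(\Omega)} + \norm{\pmb{\psi}}_{H^3(\Omega)} + \norm{\nabla \cdot \pmb{\psi}}_{H^2(\Omega)} \lesssim \norm{e^u}_{L^2(\Omega)}$. The key structural fact is Galerkin orthogonality: since $(\pmb{\varphi}_h, u_h)$ solves the discrete least squares problem, the error $(\pmb{e}^{\pmb{\varphi}}, e^u)$ is $b$-orthogonal to every discrete pair $(\tilde{\pmb{\psi}}_h, \tilde{v}_h) \in \RTBDMZero \times \Sp$. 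Hence I may subtract an arbitrary discrete interpolant of $(\pmb{\psi}, v)$ and write $\norm{e^u}_{L^2(\Omega)}^2 = b( (\pmb{e}^{\pmb{\varphi}}, e^u), (\pmb{\psi} - \tilde{\pmb{\psi}}_h, v - \tilde{v}_h) )$. Applying Cauchy--Schwarz in the $b$-norm yields $\norm{e^u}_{L^2(\Omega)}^2 \lesssim \norm{(\pmb{e}^{\pmb{\varphi}}, e^u)}_b \, \norm{(\pmb{\psi} - \tilde{\pmb{\psi}}_h, v - \tilde{v}_h)}_b$, and the norm equivalence of Theorem~\ref{theorem:norm_equivalence} bounds the second factor by $\norm{v - \tilde{v}_h}_{H^1(\Omega)} + \norm{\pmb{\psi} - \tilde{\pmb{\psi}}_h}_{\HDiv}$.

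The decisive point is then to choose the discrete interpolants well and to extract a factor $h/p$. Using the full $H^2 \times H^3$ regularity of the dual solution and the $hp$ approximation properties of $\Sp$ and $\RTBDMZero$ (cf.\ the references to \cite{boffi-brezzi-fortin13} and \cite[Section~4]{bernkopf-melenk19} collected earlier in this section), one obtains $\norm{v - \tilde{v}_h}_{H^1(\Omega)} \lesssim (h/p) \norm{v}_{H^2(\Omega)}$, $\norm{\pmb{\psi} - \tilde{\pmb{\psi}}_h}_{L^2(\Omega)} \lesssim (h/p) \norm{\pmb{\psi}}_{H^2(\Omega)}$, and $\norm{\nabla \cdot (\pmb{\psi} - \tilde{\pmb{\psi}}_h)}_{L^2(\Omega)} \lesssim (h/p) \norm{\nabla \cdot \pmb{\psi}}_{H^1(\Omega)}$; the extra regularity ($H^3$ for $\pmb{\psi}$, $H^2$ for $\nabla \cdot \pmb{\psi}$) is available but only the first-order gain is needed here. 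Combining these with the dual regularity estimates gives $\norm{(\pmb{\psi} - \tilde{\pmb{\psi}}_h, v - \tilde{v}_h)}_b \lesssim (h/p) \norm{e^u}_{L^2(\Omega)}$. Dividing through by $\norm{e^u}_{L^2(\Omega)}$ establishes $\norm{e^u}_{L^2(\Omega)} \lesssim (h/p) \norm{(\pmb{e}^{\pmb{\varphi}}, e^u)}_b$, which is the first asserted inequality.

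For the second inequality I would use the best-approximation identity \eqref{eq:cea-lemma}: for any $\tilde{u}_h \in \Sp$ and $\tilde{\pmb{\varphi}}_h \in \RTBDMZero$ one has $\norm{(\pmb{e}^{\pmb{\varphi}}, e^u)}_b \leq \norm{(\pmb{\varphi} - \tilde{\pmb{\varphi}}_h, u - \tilde{u}_h)}_b$, and then expand the right-hand $b$-norm through its definition \eqref{eq:b}, bounding the two terms $\norm{\nabla \cdot (\pmb{\varphi} - \tilde{\pmb{\varphi}}_h) + \gamma(u - \tilde{u}_h)}_{L^2(\Omega)}$ and $\norm{\nabla(u - \tilde{u}_h) + (\pmb{\varphi} - \tilde{\pmb{\varphi}}_h)}_{L^2(\Omega)}$ by the triangle inequality into the three stated quantities. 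The main obstacle is not any single estimate but the bookkeeping of the duality step: one must confirm that the regularity delivered by Theorem~\ref{theorem:duality_argument} is \emph{exactly} what is consumed by the $hp$ interpolation estimates, and that Galerkin orthogonality legitimately permits subtracting the discrete interpolant before applying Cauchy--Schwarz. As the introduction stresses, this single factor of $h/p$ is precisely what will later turn out to be suboptimal, motivating the refined argument of Theorem~\ref{theorem:e_u_optimal_l2_error_estimate}; here, however, the straightforward chain above suffices.
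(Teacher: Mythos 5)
Your proposal is correct and follows essentially the same route as the paper's proof: duality via Theorem~\ref{theorem:duality_argument} with $w=e^u$, Galerkin orthogonality, Cauchy--Schwarz in the $b$-norm, the norm equivalence of Theorem~\ref{theorem:norm_equivalence}, $hp$ approximation of the regular dual pair to extract the factor $h/p$, and finally the best-approximation property \eqref{eq:cea-lemma} for the second inequality. The only cosmetic difference is that the paper bounds $\|(\pmb{\varphi}-\tilde{\pmb{\varphi}}_h,u-\tilde{u}_h)\|_b$ by citing the norm equivalence rather than expanding \eqref{eq:b} with the triangle inequality, which amounts to the same thing.
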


\begin{proof}
  Apply Theorem~\ref{theorem:duality_argument} (duality argument for the scalar variable) with $w = e^u$.
  For any $\tilde{v}_h \in \Sp$, $\tilde{\pmb{\psi}}_h \in \RTBDMZero$,
  we find due to the Galerkin orthogonality and the Cauchy-Schwarz inequality:
  \begin{equation}\label{eq:galerkin_orthogonality_in_lemma}
    \begin{alignedat}{1}
      \norm{e^u}_{L^2(\Omega)}^2
      &= b( ( \pmb{e}^{\pmb{\varphi}} , e^u ) , ( \pmb{\psi} , v ) ) \\
      &= b( ( \pmb{e}^{\pmb{\varphi}} , e^u ) , ( \pmb{\psi} - \tilde{\pmb{\psi}}_h , v - \tilde{v}_h ) ) \\
      &\leq \| ( \pmb{e}^{\pmb{\varphi}} , e^u ) \|_b \| ( \pmb{\psi} - \tilde{\pmb{\psi}}_h , v - \tilde{v}_h ) \|_b.
    \end{alignedat}
  \end{equation}
  Using Theorem~\ref{theorem:norm_equivalence} (norm equivalence), and exploiting the regularity results and estimates of Theorem~\ref{theorem:duality_argument}
  as well as the $\HOne$ and $\HDiv$ conforming operators in \cite{melenk-rojik18}, we can find $\tilde{v}_h \in \Sp$, $\tilde{\pmb{\psi}}_h \in \RTBDMZero$, such that
  \begin{align*}
    \| ( \pmb{\psi} - \tilde{\pmb{\psi}}_h , v - \tilde{v}_h ) \|_b
    &\lesssim \| v - \tilde{v}_h \|_{H^1(\Omega)} + \| \pmb{\psi} - \tilde{\pmb{\psi}}_h \|_{\HDiv} \\
    &\lesssim h/p \left( \norm{v}_{H^2(\Omega)} + \norm{\pmb{\psi}}_{\pmb{H}^1(\Omega, \div)} \right) \\
    &\lesssim h/p \norm{e^u}_{L^2(\Omega)},
  \end{align*}
  where we exploited the regularity for $(\pmb{\psi}, v)$ and the \textsl{a priori} estimates of Theorem~\ref{theorem:duality_argument}, which proves the first estimate.
  The second one follows by the fact that the least squares solution is the projection with respect to the scalar product $b$.
  Therefore $\| ( \pmb{e}^{\pmb{\varphi}} , e^u ) \|_b \leq \| ( \pmb{\varphi} - \tilde{\pmb{\varphi}}_h , u - \tilde{u}_h ) \|_b$.
  The result follows by applying the norm equivalence given in Theorem~\ref{theorem:norm_equivalence}.
\end{proof}

\begin{remark}[Heuristic arguments for improved $L^2(\Omega)$ convergence]\label{remark:heuristic_arguments}
  We present an argument why improved convergence of the scalar variable $u$ can be expected.
  We again start by applying our duality argument and exploit the Galerkin orthogonality as in (\ref{eq:galerkin_orthogonality_in_lemma}) in the proof of Lemma~\ref{lemma:e_u_suboptimal_l2_error_estimate}.
  Instead of immediately applying the Cauchy-Schwarz inequality we investigate the terms in the $b$ scalar product
  and analyze the best rate we can expect from the regularity of the dual problem:
  \begin{align*}
    \norm{e^u}_{L^2(\Omega)}^2
    &= b( ( \pmb{e}^{\pmb{\varphi}} , e^u ), ( \pmb{\psi} - \tilde{\pmb{\psi}}_h , v - \tilde{v}_h ) ) \\
    &=
    ( \underbrace{\nabla \cdot \pmb{e}^{\pmb{\varphi}} + \gamma e^u}_{\frownie{}} ,
      \underbrace{\nabla \cdot (\pmb{\psi} - \tilde{\pmb{\psi}}_h)}_{\sim h^2} + \gamma \underbrace{(v - \tilde{v}_h)}_{\sim h^2} )_{\ltwo}
    +
    ( \underbrace{\nabla e^u + \pmb{e}^{\pmb{\varphi}}}_{\frownie{}} ,
      \underbrace{\nabla (v - \tilde{v}_h)}_{\sim h} + \underbrace{\pmb{\psi} - \tilde{\pmb{\psi}}_h}_{\sim h^3}  )_{\ltwo}. \\
  \end{align*}
  Note that the terms are not equilibrated and we cannot expect any rate from the terms marked by $\frownie{}$.
  However choosing $( \tilde{\pmb{\psi}}_h , \tilde{v}_h )$ to be the least squares approximation $( \pmb{\psi}_h , v_h )$ of $( \pmb{\psi} , v )$ and again exploiting the Galerkin orthogonality we have for any
  $( \tilde{\pmb{\varphi}}_h , \tilde{u}_h )$:
  \begin{align*}
    \norm{e^u}_{L^2(\Omega)}^2
    &= b( ( \pmb{e}^{\pmb{\varphi}} , e^u ), ( \pmb{e}^{\pmb{\psi}} , e^v ) ) \\
    &= b( ( \pmb{\varphi} - \tilde{\pmb{\varphi}}_h , u - \tilde{u}_h ), ( \pmb{e}^{\pmb{\psi}} , e^v ) ) \\
    &=
    ( \underbrace{\nabla \cdot (\pmb{\varphi} - \tilde{\pmb{\varphi}}_h)}_{\frownie{}} + \gamma \underbrace{(u - \tilde{u}_h)}_{\sim h^2} ,
      \underbrace{\nabla \cdot \pmb{e}^{\pmb{\psi}}}_{\sim h} + \gamma \underbrace{e^v}_{\sim h^2} )_{\ltwo}
    + ( \underbrace{\nabla (u - \tilde{u}_h)}_{\sim h} + \underbrace{\pmb{\varphi} - \tilde{\pmb{\varphi}}_h}_{\sim h} , \underbrace{\nabla e^v + \pmb{e}^{\pmb{\psi}}}_{\sim h} )_{\ltwo}.
  \end{align*}
  The improved convergence of the dual solution will be shown in Lemma~\ref{lemma:convergence_of_dual_solution}.
  From a best approximation viewpoint the $\nabla \cdot$ term involving $\pmb{\varphi}$ still has no rate.
  To be more precise, the second term has the right powers of $h$ resulting in an overall $h^2$.
  Since the term $\gamma (u - \tilde{u}_h)$ already has order $h^2$ we have no problem with that one.
  The term with the worst rate is
  \begin{equation*}
    ( \nabla \cdot (\pmb{\varphi} - \tilde{\pmb{\varphi}}_h) , \nabla \cdot \pmb{e}^{\pmb{\psi}} )_{\ltwo} \sim h.
  \end{equation*}
  Out of the box we cannot find an extra $h$ to get optimal convergence, even though $\pmb{\psi}$ has far more regularity, which we did not exploit yet.
  We now want to construct an operator $\IhZero$ mapping into the conforming finite element space of the vector variable.
  To exploit the regularity of $\pmb{\psi}$ we insert any $\tilde{\pmb{\psi}}_h \in \RTBDMZero$.
  We have
  \begin{equation*}
    ( \nabla \cdot (\pmb{\varphi} - \IhZero \pmb{\varphi}) , \nabla \cdot \pmb{e}^{\pmb{\psi}} )_{\ltwo} =
    ( \nabla \cdot (\pmb{\varphi} - \IhZero \pmb{\varphi}) , \nabla \cdot ( \pmb{\psi} - \tilde{\pmb{\psi}}_h ) )_{\ltwo} +
    ( \nabla \cdot (\pmb{\varphi} - \IhZero \pmb{\varphi}) , \nabla \cdot ( \tilde{\pmb{\psi}}_h - \pmb{\psi}_h ) )_{\ltwo}.
  \end{equation*}
  Note that $\tilde{\pmb{\psi}}_h - \pmb{\psi}_h$ is a discrete object.
  If we assume $\IhZero$ to satisfy the orthogonality condition
  \begin{equation*}
    ( \nabla \cdot (\pmb{\varphi} - \IhZero \pmb{\varphi}) , \nabla \cdot \pmb{\chi}_h )_{\ltwo} = 0 , \qquad \forall \pmb{\chi}_h \in \RTBDMZero
  \end{equation*}
  we arrive at
  \begin{equation*}
    ( \nabla \cdot (\pmb{\varphi} - \IhZero \pmb{\varphi}) , \nabla \cdot \pmb{e}^{\pmb{\psi}} )_{\ltwo} =
    ( \nabla \cdot (\pmb{\varphi} - \IhZero \pmb{\varphi}) , \underbrace{\nabla \cdot ( \pmb{\psi} - \tilde{\pmb{\psi}}_h )}_{h^2} )_{\ltwo} \sim h^2.
  \end{equation*}
  Therefore the operator $\IhZero$ should satisfy the aforementioned orthogonality condition and have good approximation properties in $L^2(\Omega)$, as needed above.
  In the following we will construct operators $\IhZero$ and $\Ih$ acting on $\HZeroDiv$ and $\HDiv$ respectively.
\eremk
\end{remark}

\noindent
\subsection{The operators $\IhZero$ and $\Ih$ }
In the spirit of Remark~\ref{remark:heuristic_arguments} a natural choice for the operator $\IhZero$ is the following constrained minimization problem
\begin{equation*}
  \IhZero \pmb{\varphi} = \underset{\pmb{\varphi}_h \in \RTBDMZero }{\mathrm{argmin}} \frac{1}{2}\norm{ \pmb{\varphi} - \pmb{\varphi}_h }_{L^2(\Omega)}^2
  \qquad \text{s.t.} \qquad
  ( \nabla \cdot (\pmb{\varphi} - \IhZero \pmb{\varphi}) , \nabla \cdot \pmb{\chi}_h )_{\ltwo} = 0 \qquad \forall \pmb{\chi}_h \in \RTBDMZero.
\end{equation*}
The corresponding Lagrange function is
\begin{equation*}
  L(\pmb{\varphi}_h, \pmb{\lambda}_h) = \frac{1}{2}\norm{ \pmb{\varphi}_h - \pmb{\varphi} }_{L^2(\Omega)}^2 + ( \nabla \cdot (\pmb{\varphi}_h - \pmb{\varphi}) , \nabla \cdot \pmb{\lambda}_h )_{\ltwo}
\end{equation*}
and the associated saddle point problem is to find $(\pmb{\varphi}_h, \pmb{\lambda}_h) \in \RTBDMZero \times \RTBDMZero$ such that
\begin{subequations}
\begin{alignat}{2}
\label{eq:IhZero-a}
  &( \pmb{\varphi}_h - \pmb{\varphi} , \pmb{\mu}_h )_{\ltwo} + ( \nabla \cdot \pmb{\mu}_h , \nabla \cdot \pmb{\lambda}_h )_{\ltwo}  &&= 0 \qquad \forall \pmb{\mu}_h  \in \RTBDMZero,\\
\label{eq:IhZero-b}
  &( \nabla \cdot ( \pmb{\varphi}_h - \pmb{\varphi} ), \nabla \cdot \pmb{\eta}_h )_{\ltwo}                                                &&= 0 \qquad \forall \pmb{\eta}_h \in \RTBDMZero.
\end{alignat}
\end{subequations}
Uniqueness is not given since only the divergence of the Lagrange parameter appears.
However, by focussing on the divergence of the Lagrange parameter, we can formulate it in the following way:
Find $(\pmb{\varphi}_h, \lambda_h) \in \RTBDMZero \times \nabla \cdot \RTBDMZero$ such that
\begin{subequations}
\begin{alignat}{3}
\label{eq:Ih-a}
  &( \pmb{\varphi}_h , \pmb{\mu}_h )_{\ltwo} + ( \nabla \cdot \pmb{\mu}_h , \lambda_h )_{\ltwo}  &&= ( \pmb{\varphi} , \pmb{\mu}_h )_{\ltwo} \quad &&\forall \pmb{\mu}_h    \in \RTBDMZero,\\
\label{eq:Ih-b}
  &( \nabla \cdot \pmb{\varphi}_h , \eta_h )_{\ltwo}                                                   &&= ( \nabla \cdot \pmb{\varphi} , \eta_h )_{\ltwo} \qquad    &&\forall \eta_h         \in \nabla \cdot \RTBDMZero.
\end{alignat}
\end{subequations}
The construction of $\Ih$ is completely analogous, one just drops the zero boundary conditions everywhere.

To see that the operator $\IhZero$ is well-defined,
we have to check the Babu{\v s}ka–Brezzi conditions, see \cite{boffi-brezzi-fortin13}.
Let us first verify solvability on the continuous level.
\\
\noindent
\textbf{Coercivity on the kernel}:
Let $\pmb{\mu} \in \left\{ \pmb{\psi} \in \HZeroDiv \colon (\nabla \cdot \pmb{\psi}, \eta)_{\ltwo} = 0, \forall \eta \in \nabla \cdot \HZeroDiv \right\}$ be given.
The coercivity is trivial since by construction $ (\nabla \cdot \pmb{\mu}, \nabla \cdot \pmb{\mu} )_{\ltwo} = 0 $ and therefore
\begin{equation*}
  (\pmb{\mu}, \pmb{\mu})_{\ltwo} = \norm{\pmb{\mu}}_{L^2(\Omega)}^2 = \norm{\pmb{\mu}}_{L^2(\Omega)}^2 + \norm{\nabla \cdot \pmb{\mu}}_{L^2(\Omega)}^2 = \norm{\pmb{\mu}}_{\HDiv}^2.
\end{equation*}
\\
\noindent
\textbf{inf-sup condition}:
Let $\eta \in \nabla \cdot \HZeroDiv$ be given.
First let $u \in H^1(\Omega)$ with zero average solve
\begin{equation*}
\begin{alignedat}{2}
    - \Delta u &= \eta \quad   &&\text{in } \Omega,\\
  \partial_n u &= 0            &&\text{on } \Gamma.
\end{alignedat}
\end{equation*}
By elliptic regularity we have $\norm{u}_{H^2(\Omega)} \lesssim \norm{\eta}_{L^2(\Omega)}$ and upon defining $\pmb{\mu} = - \nabla u$ we also have $\norm{\pmb{\mu}}_{\HDiv} \lesssim \norm{\eta}_{L^2(\Omega)}$.
Note that by construction $\pmb{\mu} \in \HZeroDiv$ as well as
\begin{equation*}
  (\nabla \cdot \pmb{\mu}, \eta)_{\ltwo} =
  (\eta, \eta)_{\ltwo} =
  \norm{\eta}_{L^2(\Omega)} \norm{\eta}_{L^2(\Omega)} \gtrsim
  \norm{\eta}_{L^2(\Omega)} \norm{\pmb{\mu}}_{\HDiv},
\end{equation*}
which proves the inf-sup condition.
\\
\noindent
\textbf{Coercivity on the kernel - discrete}:
The coercivity is again trivial by the same argument as above.
\\
\noindent
\textbf{inf-sup condition - discrete}:
Let $\lambda_h \in \nabla \cdot \RTBDMZero$ be given.
As above in the continuous case we solve the Poisson problem
\begin{equation*}
\begin{alignedat}{2}
    - \Delta u &= \lambda_h \quad   &&\text{in } \Omega,\\
  \partial_n u &= 0                 &&\text{on } \Gamma.
\end{alignedat}
\end{equation*}
Let $\pmb{\Lambda} = - \nabla u$ and again we have $\norm{\pmb{\Lambda}}_{\HDiv} \leq \norm{\pmb{\Lambda}}_{H^1(\Omega)} \leq \norm{u}_{H^2(\Omega)} \lesssim \norm{\lambda_h}_{L^2(\Omega)}$.
We now employ the commuting projection based interpolation operators defined in \cite{melenk-rojik18},
especially the global operator $\pmb{\Pi}^{\div}_{p}$ given in \cite[Remark~{2.10}]{melenk-rojik18}, see also \cite[Section~{4.8}]{rojik19} in the case $\RTBDMZero = \BDMZero$.
Let therefore $\pmb{\Pi}^{\div, \star}_{p_v}$ denote either the operator $\pmb{\Pi}^{\div}_{p_v - 1}$ if $\RTBDMZero = \RTZero$ or the analogous operator $\pmb{\Pi}^{\div}_{p_v}$ in the case  $\RTBDMZero = \BDMZero$.
We use this operator to project $\pmb{\Lambda}$ onto the conforming subspace.
With $\pmb{\Lambda}_h \coloneqq \pmb{\Pi}^{\div, \star}_{p_v} \pmb{\Lambda}$ we find
\begin{equation*}
  \nabla \cdot \pmb{\Lambda}_h = \nabla \cdot \pmb{\Pi}^{\div, \star}_{p_v} \pmb{\Lambda} = \pmb{\Pi}^{L^2}_{p_v} \nabla \cdot \pmb{\Lambda} =  \pmb{\Pi}^{L^2}_{p_v} \lambda_h = \lambda_h,
\end{equation*}
where $\pmb{\Pi}^{L^2}_{p_v}$ denotes the $L^2$ orthogonal projection on $\nabla \cdot \RTBDMZero$.
Using \cite[Theorem~{2.8 (vi)}]{melenk-rojik18} we can estimate
\begin{equation*}
  \| \pmb{\Lambda} - \pmb{\Pi}^{\div, \star}_{p_v} \pmb{\Lambda} \|_{\HDiv}
  \lesssim \norm{\pmb{\Lambda}}_{H^1(\Omega)}
  \lesssim \norm{\lambda_h}_{L^2(\Omega)},
\end{equation*}
which finally leads to
\begin{equation*}
  \| \pmb{\Lambda}_h \|_{\HDiv}
  = \| \pmb{\Pi}^{\div, \star}_{p_v} \pmb{\Lambda} \|_{\HDiv}
  \lesssim \| \pmb{\Lambda} - \pmb{\Pi}^{\div, \star}_{p_v} \pmb{\Lambda} \|_{\HDiv} + \| \pmb{\Lambda} \|_{\HDiv}
  \lesssim  \norm{\lambda_h}_{L^2(\Omega)}.
\end{equation*}
For any $\lambda_h \in \nabla \cdot \RTBDMZero$ we estimate
\begin{equation*}
  \sup_{\pmb{\varphi}_h \in \RTBDMZero } \frac{(\nabla \cdot \pmb{\varphi}_h, \lambda_h)_{\ltwo}}{\norm{\pmb{\varphi}_h}_{\HDiv} \norm{\lambda_h}_{L^2(\Omega)}}
  \geq \frac{(\nabla \cdot \pmb{\Lambda}_h, \lambda_h)_{\ltwo}}{\norm{\pmb{\Lambda}_h}_{\HDiv} \norm{\lambda_h}_{L^2(\Omega)}}
  = \frac{ \norm{\lambda_h}_{L^2(\Omega)} }{\norm{\pmb{\Lambda}_h}_{\HDiv}}
  \gtrsim 1,
\end{equation*}
which proves the discrete inf-sup condition.
The above arguments can be modified in a straightforward manner when replacing $\RTBDMZero$ with $\RTBDM$ and $\HZeroDiv$ with $\HDiv$.
The only caveate is the fact that one has to replace the homogeneous Neumann boundary condition in the auxiliary problem,
used in the verification of the inf-sup condition, by a homogeneous Dirichlet boundary condition.
We have therefore proven
\begin{lemma}\label{lemma:Ih_well_defined}
For any mesh $\mathcal{T}_h$ satisfying Assumption~\ref{assumption:quasi_uniform_regular_meshes},
the operators $\IhZero: \HZeroDiv \rightarrow \RTBDMZero$ and $\Ih : \HDiv \rightarrow \RTBDM$ are well defined
with bounds independent of the mesh size $h$ and the polynomial degree $p$.
They are projections.
\end{lemma}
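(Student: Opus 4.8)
The plan is to recognize the defining constrained minimization problem as the saddle-point system \eqref{eq:Ih-a}--\eqref{eq:Ih-b} and to invoke the standard Brezzi splitting theorem (the Babu\v{s}ka--Brezzi theory; see \cite{boffi-brezzi-fortin13}). Accordingly, I would verify the two structural hypotheses at both the continuous and the discrete level: coercivity of the bilinear form $(\cdot,\cdot)_{\ltwo}$ on the kernel of the divergence constraint, and the inf-sup condition for the constraint form $(\nabla \cdot \cdot\,, \cdot)_{\ltwo}$. Well-definedness of $\IhZero$ and $\Ih$ together with their stability bounds then follows immediately; the delicate point is to ensure that all constants are independent of $h$ \emph{and} $p$.

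The coercivity on the kernel is essentially free: on the kernel one has $\nabla \cdot \pmb{\mu} = 0$, whence $(\pmb{\mu}, \pmb{\mu})_{\ltwo} = \norm{\pmb{\mu}}_{L^2(\Omega)}^2 = \norm{\pmb{\mu}}_{\HDiv}^2$ with constant $1$. The genuine work lies in the inf-sup condition, and in particular in its \emph{discrete} form with a constant that does not degenerate as $p \to \infty$. My strategy here is a Fortin-type lifting: given $\lambda_h \in \nabla \cdot \RTBDMZero$, I would solve the auxiliary Neumann problem $-\Delta u = \lambda_h$ (a Dirichlet problem in the case of $\Ih$) and set $\pmb{\Lambda} = -\nabla u$, so that $\nabla \cdot \pmb{\Lambda} = \lambda_h$ and $\norm{\pmb{\Lambda}}_{\HDiv} \lesssim \norm{\lambda_h}_{L^2(\Omega)}$ by elliptic regularity. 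I would then project $\pmb{\Lambda}$ into the conforming space with the commuting projection-based interpolation operator of \cite{melenk-rojik18}. The commuting-diagram property is exactly what makes this succeed: it forces $\nabla \cdot \pmb{\Lambda}_h = \pmb{\Pi}^{L^2}(\nabla \cdot \pmb{\Lambda}) = \pmb{\Pi}^{L^2}\lambda_h = \lambda_h$, while the $h$- and $p$-uniform stability of that operator gives $\norm{\pmb{\Lambda}_h}_{\HDiv} \lesssim \norm{\lambda_h}_{L^2(\Omega)}$, and testing against $\pmb{\Lambda}_h$ yields the inf-sup estimate with a uniform constant.

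The main obstacle, and the reason the statement is nontrivial, is precisely this $p$-robustness of the discrete inf-sup constant: a naive use of the canonical $\HDiv$ interpolant would furnish an $h$-uniform but $p$-dependent bound, and the polynomial-degree independence hinges entirely on the deep commuting operators of \cite{melenk-rojik18}. With both inf-sup conditions established, Brezzi's theorem delivers unique solvability and uniform bounds for \eqref{eq:Ih-a}--\eqref{eq:Ih-b}, hence well-definedness of $\IhZero$ and $\Ih$ with $h,p$-independent norms. Finally, to see that the operators are projections, I would note that for $\pmb{\varphi} \in \RTBDMZero$ the pair $(\pmb{\varphi}, 0)$ already solves \eqref{eq:Ih-a}--\eqref{eq:Ih-b}, so uniqueness forces $\IhZero \pmb{\varphi} = \pmb{\varphi}$; the identical argument applies to $\Ih$.
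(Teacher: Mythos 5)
Your proposal is correct and follows essentially the same route as the paper: the same Brezzi saddle-point framework, the same trivial kernel coercivity, and the same Fortin-type lifting via an auxiliary Poisson problem combined with the commuting projection-based interpolation operator of \cite{melenk-rojik18} to obtain the $h$- and $p$-uniform discrete inf-sup constant (with the Neumann/Dirichlet swap for $\Ih$). Your explicit argument for the projection property, via uniqueness applied to the pair $(\pmb{\varphi},0)$, is a small welcome addition that the paper leaves implicit.
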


We are now going to analyze the approximation properties of the operator $\IhZero$ and $\Ih$ in the $L^2(\Omega)$ norm.
To that end we need certain decompositions on a continuous as well as a discrete level.
\begin{lemma}[Continuous and discrete Helmholtz-like decomposition - no boundary conditions]\label{lemma:helmholtz_decomp}
  The operators
  $\pmb{\Pi}^{\mathrm{curl}} \colon \HDiv \to \nabla \times \HCurl$
  and
  $\pmb{\Pi}^{\mathrm{curl}}_h \colon \RTBDM \to \nabla \times \Nedelec$
  given by
  \begin{align}
    ( \pmb{\Pi}^{\mathrm{curl}}   \pmb{\varphi}  , \nabla \times \pmb{\mu} )_{\ltwo} &= ( \pmb{\varphi}  , \nabla \times \pmb{\mu} )_{\ltwo} \quad \forall \pmb{\mu} \in \HCurl, \label{eq:continuous_helmholtz_decomp} \\
    ( \pmb{\Pi}^{\mathrm{curl}}_h \pmb{\varphi}_h, \nabla \times \pmb{\mu} )_{\ltwo} &= ( \pmb{\varphi}_h, \nabla \times \pmb{\mu} )_{\ltwo} \quad \forall \pmb{\mu} \in \Nedelec \label{eq:discrete_helmholtz_decomp}
  \end{align}
  are well defined.
  Furthermore, the remainder $\pmb{r}$ of the continuous decomposition $\pmb{\varphi} = \pmb{\Pi}^{\mathrm{curl}} \pmb{\varphi} + \pmb{r}$ satisfies
  \begin{equation*}
  \begin{alignedat}{2}
    \nabla \cdot  \pmb{r}  &= \nabla \cdot \pmb{\varphi} \quad   &&\text{in } \Omega, \\
    \nabla \times \pmb{r}  &= 0                                  &&\text{in } \Omega, \\
    \pmb{n} \times \pmb{r} &= 0                                  &&\text{on } \Gamma,
  \end{alignedat}
  \end{equation*}
  as well as $\pmb{r} \in \pmb{H}^1(\Omega)$.
  Additionally there exists $R \in H^2(\Omega) \cap \HZeroOne$ such that $\pmb{r} = \nabla R$, where $R$ satisfies
  \begin{equation*}
  \begin{alignedat}{2}
    \Delta R  &= \nabla \cdot \pmb{\varphi} \quad   &&\text{in } \Omega, \\
            R &= 0                                  &&\text{on } \Gamma.
  \end{alignedat}
  \end{equation*}
  Finally, the estimate $\norm{ R }_{H^2(\Omega)} \lesssim \norm{\pmb{r}}_{H^1(\Omega)} \lesssim \norm{\nabla \cdot \pmb{\varphi}}_{L^2(\Omega)}$ holds.
\end{lemma}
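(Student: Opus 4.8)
The plan is to build the decomposition constructively from a scalar potential $R$ and then check that $\pmb{\Pi}^{\mathrm{curl}}\pmb{\varphi}\coloneqq\pmb{\varphi}-\nabla R$ satisfies the defining identity \eqref{eq:continuous_helmholtz_decomp}, rather than first establishing abstract closedness of $\nabla\times\HCurl$. First I would solve a scalar Dirichlet problem: since $\pmb{\varphi}\in\HDiv$ we have $\nabla\cdot\pmb{\varphi}\in L^2(\Omega)$, so the Lax--Milgram theorem (coercivity by the Poincar\'e inequality on $\HZeroOne$) yields a unique $R\in\HZeroOne$ with $(\nabla R,\nabla v)_{\ltwo}=(\pmb{\varphi},\nabla v)_{\ltwo}$ for all $v\in\HZeroOne$, which is the weak form of $\Delta R=\nabla\cdot\pmb{\varphi}$ in $\Omega$, $R=0$ on $\Gamma$. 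Because $\Gamma$ is smooth, the elliptic shift theorem gives $R\in H^2(\Omega)$ with $\norm{R}_{H^2(\Omega)}\lesssim\norm{\nabla\cdot\pmb{\varphi}}_{L^2(\Omega)}$. Setting $\pmb{r}\coloneqq\nabla R$ then furnishes $\pmb{r}\in\pmb{H}^1(\Omega)$ and, using once more the Poincar\'e bound $\norm{R}_{L^2(\Omega)}\lesssim\norm{\nabla R}_{L^2(\Omega)}$, the full chain $\norm{R}_{H^2(\Omega)}\lesssim\norm{\pmb{r}}_{H^1(\Omega)}\lesssim\norm{\nabla\cdot\pmb{\varphi}}_{L^2(\Omega)}$, i.e.\ the asserted estimate.

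Next I would verify the three differential relations for $\pmb{r}$. The identity $\nabla\times\pmb{r}=\nabla\times\nabla R=0$ is immediate, and $\nabla\cdot\pmb{r}=\Delta R=\nabla\cdot\pmb{\varphi}$ holds in $L^2(\Omega)$ by construction. For the boundary condition I would use that $R\in H^2(\Omega)$ with $R=0$ on $\Gamma$ forces the surface gradient of the trace to vanish, so the tangential part of $\nabla R$ is zero on $\Gamma$; this is precisely $\pmb{n}\times\pmb{r}=0$.

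Then I would set $\pmb{\Pi}^{\mathrm{curl}}\pmb{\varphi}\coloneqq\pmb{\varphi}-\pmb{r}$ and confirm the two properties in the statement. Identity \eqref{eq:continuous_helmholtz_decomp} reduces to $(\pmb{r},\nabla\times\pmb{\mu})_{\ltwo}=0$ for all $\pmb{\mu}\in\HCurl$, which follows by integration by parts: since $\pmb{r}=\nabla R$, the volume term produces $-(R,\nabla\cdot(\nabla\times\pmb{\mu}))_{\ltwo}=0$ and the boundary term vanishes because $R=0$ on $\Gamma$. It remains to check that $\pmb{\varphi}-\pmb{r}$ genuinely lies in $\nabla\times\HCurl$: taking the divergence gives $\nabla\cdot(\pmb{\varphi}-\pmb{r})=0$, and on the simply connected smooth domain $\Omega$ a divergence-free $\pmb{L}^2(\Omega)$ field admits a vector potential in $\HCurl$ (a scalar stream function when $n=2$), so $\pmb{\varphi}-\pmb{r}$ is a curl. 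Uniqueness of $\pmb{\Pi}^{\mathrm{curl}}\pmb{\varphi}$ follows by testing the homogeneous version of \eqref{eq:continuous_helmholtz_decomp} against a potential of the difference of two candidates, which makes the difference vanish in $L^2(\Omega)$; hence the operator is well defined. The discrete operator $\pmb{\Pi}^{\mathrm{curl}}_h$ is the $L^2(\Omega)$-orthogonal projection onto the finite-dimensional, hence closed, space $\nabla\times\Nedelec$, so \eqref{eq:discrete_helmholtz_decomp} determines it uniquely with no further argument.

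I expect the only genuinely non-routine step to be the surjectivity of the curl needed for $\pmb{\varphi}-\pmb{r}\in\nabla\times\HCurl$: this is where the topological hypotheses on $\Omega$ (simple connectedness and a smooth, connected boundary) enter, through the standard vector-potential / Helmholtz--Hodge decomposition theory, whereas every remaining step is a direct consequence of Lax--Milgram, the elliptic shift theorem, and integration by parts.
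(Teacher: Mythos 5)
Your proof is correct, but it runs in the opposite direction from the paper's. The paper defines $\pmb{\Pi}^{\mathrm{curl}}$ abstractly as the $L^2(\Omega)$-orthogonal projection onto $\nabla\times\HCurl$, reads off $\nabla\times\pmb{r}=0$ and $\nabla\cdot\pmb{r}=\nabla\cdot\pmb{\varphi}$ from the orthogonality, obtains $\pmb{n}\times\pmb{r}=0$ from the weak characterization of $\HZeroCurl$ (Monk, Thm.~3.33), and only then produces the potential $R\in\HZeroOne$ via exactness of the de Rham complex \emph{with} boundary conditions at the $\HZeroCurl$ level, identifying the Dirichlet problem for $R$ a posteriori and invoking the shift theorem. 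You instead solve that Dirichlet problem first, set $\pmb{r}=\nabla R$ and $\pmb{\Pi}^{\mathrm{curl}}\pmb{\varphi}=\pmb{\varphi}-\pmb{r}$, check the orthogonality $(\pmb{r},\nabla\times\pmb{\mu})_{\ltwo}=0$ by integration by parts, and certify that $\pmb{\varphi}-\pmb{r}$ lies in $\nabla\times\HCurl$ via exactness of the complex \emph{without} boundary conditions at the $\HDiv$ level (every divergence-free $\pmb{L}^2(\Omega)$ field is a curl). The two routes lean on different segments of the de Rham complex, but both required exactness properties hold under the paper's standing hypotheses (simply connected $\Omega$ with smooth connected $\Gamma$). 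Your construction buys two small things: the $H^2(\Omega)$ regularity of $R$ and the norm chain come out immediately rather than at the end, and you never need to argue that $\nabla\times\HCurl$ is a closed subspace of $\pmb{L}^2(\Omega)$, which the paper's appeal to the projection theorem implicitly requires; the cost is that you must separately verify membership of $\pmb{\varphi}-\pmb{r}$ in the range of the curl and uniqueness of the projection, both of which you do correctly. The treatment of the discrete operator is identical in both arguments.
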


\begin{proof}
  For unique solvability of the variational definition of the operators,
  just note that they are the $L^2(\Omega)$ orthogonal projection on $\nabla \times \HCurl$ and $\nabla \times \Nedelec$ respectively.
  By construction we have
  \begin{equation*}
    (\pmb{r}, \nabla \times \pmb{\mu} )_{\ltwo} = 0 \quad \forall \pmb{\mu} \in \HCurl
  \end{equation*}
  which by definition gives $\nabla \times \pmb{r} = 0$.
  Furthermore, by the characterization of $\HZeroCurl$ given in \cite[Thm.~{3.33}]{monk03} we have $\pmb{n} \times \pmb{r} = 0$.
  Since $\pmb{\Pi}^{\curl} \pmb{\varphi} \in \nabla \times \HCurl$ we immediately have $\nabla \cdot  \pmb{r} = \nabla \cdot \pmb{\varphi}$.
  Exploiting the exact sequence property of the following de Rahm complex
  \begin{equation*}
    \left\{ 0 \right\} \stackrel{\mathrm{id}}{\longrightarrow}
    \HZeroOne          \stackrel{\nabla}{\longrightarrow}
    \HZeroCurl         \stackrel{\nabla \times}{\longrightarrow}
    \HZeroDiv          \stackrel{\nabla \cdot}{\longrightarrow}
    L^2_0(\Omega)      \stackrel{0}{\longrightarrow}
    \left\{ 0 \right\}
  \end{equation*}
  in the case that both $\Omega$ and $\Gamma$ are simply connected,
  we can find $R \in \HZeroOne$ such that $\pmb{r} = \nabla R$.
  Therefore $R$ solves the asserted equation.
  The Friedrichs inequality and elliptic regularity theory then give the desired results.
\end{proof}

By nearly the same arguments we also have a version for zero boundary conditions:

\begin{lemma}[Continuous and discrete Helmholtz-like decomposition - zero boundary conditions]\label{lemma:helmholtz_decomp_zero}
  The operators
  $\pmb{\Pi}^{\mathrm{curl},0} \colon \HZeroDiv \to \nabla \times \HZeroCurl$
  and
  $\pmb{\Pi}^{\mathrm{curl},0}_h \colon \RTBDMZero \to \nabla \times \NedelecZero$
  given by
  \begin{align}
    ( \pmb{\Pi}^{\mathrm{curl},0}   \pmb{\varphi}  , \nabla \times \pmb{\mu} )_{\ltwo} &= ( \pmb{\varphi}  , \nabla \times \pmb{\mu} )_{\ltwo} \quad \forall \pmb{\mu} \in \HZeroCurl \label{eq:continuous_helmholtz_decomp_zero} \\
    ( \pmb{\Pi}^{\mathrm{curl},0}_h \pmb{\varphi}_h, \nabla \times \pmb{\mu} )_{\ltwo} &= ( \pmb{\varphi}_h, \nabla \times \pmb{\mu} )_{\ltwo} \quad \forall \pmb{\mu} \in \NedelecZero \label{eq:discrete_helmholtz_decomp_zero}
  \end{align}
  are well defined.
  Furthermore, the remainder $\pmb{r}$ of the continuous decomposition $\pmb{\varphi} = \pmb{\Pi}^{\mathrm{curl},0} \pmb{\varphi} + \pmb{r}$ satisfies
  \begin{equation*}
    \begin{alignedat}{2}
      \nabla \cdot  \pmb{r}  &= \nabla \cdot \pmb{\varphi} \quad   &&\text{in } \Omega, \\
      \nabla \times \pmb{r}  &= 0                                  &&\text{in } \Omega, \\
      \pmb{r} \cdot \pmb{n}  &= 0                                  &&\text{on } \Gamma,
    \end{alignedat}
  \end{equation*}
  as well as $\pmb{r} \in \pmb{H}^1(\Omega)$.
  Additionally there exists an $R \in H^2(\Omega) \cap H^1(\Omega)/\mathbb{R}$ such that $\pmb{r} = \nabla R$, where $R$ satisfies
  \begin{equation*}
  \begin{alignedat}{2}
    \Delta R     &= \nabla \cdot \pmb{\varphi} \quad   &&\text{in } \Omega, \\
    \partial_n R &= 0                                  &&\text{on } \Gamma.
  \end{alignedat}
  \end{equation*}
  Finally, the estimate $\norm{ R }_{H^2(\Omega)} \lesssim \norm{\pmb{r}}_{H^1(\Omega)} \lesssim \norm{\nabla \cdot \pmb{\varphi}}_{L^2(\Omega)}$ holds.
\end{lemma}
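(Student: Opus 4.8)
The plan is to mirror the proof of Lemma~\ref{lemma:helmholtz_decomp}, swapping the roles of the two boundary conditions throughout. First I would observe that $\nabla \times \HZeroCurl$ and $\nabla \times \NedelecZero$ are closed subspaces of $\pmb{L}^2(\Omega)$, so the defining relations \eqref{eq:continuous_helmholtz_decomp_zero} and \eqref{eq:discrete_helmholtz_decomp_zero} characterize $\pmb{\Pi}^{\mathrm{curl},0}$ and $\pmb{\Pi}^{\mathrm{curl},0}_h$ as the $L^2(\Omega)$ orthogonal projections onto these spaces, which makes both operators well defined. By construction the remainder satisfies $(\pmb{r}, \nabla \times \pmb{\mu})_{\ltwo} = 0$ for all $\pmb{\mu} \in \HZeroCurl$; testing against $\pmb{\mu} \in C_0^\infty(\Omega)^n \subset \HZeroCurl$ yields $\nabla \times \pmb{r} = 0$ distributionally, and since $\pmb{\Pi}^{\mathrm{curl},0}\pmb{\varphi} \in \nabla \times \HZeroCurl$ lies in the kernel of the divergence, we obtain $\nabla \cdot \pmb{r} = \nabla \cdot \pmb{\varphi}$. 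For the boundary condition I would exploit that, in the zero-trace complex, $\nabla \times \HZeroCurl \subseteq \HZeroDiv$, so that $\pmb{\Pi}^{\mathrm{curl},0}\pmb{\varphi}$ has vanishing normal trace; as $\pmb{\varphi} \in \HZeroDiv$ as well, the difference $\pmb{r} = \pmb{\varphi} - \pmb{\Pi}^{\mathrm{curl},0}\pmb{\varphi}$ satisfies $\pmb{r} \cdot \pmb{n} = 0$ on $\Gamma$. This is the essential difference to Lemma~\ref{lemma:helmholtz_decomp}: restricting the test space to $\HZeroCurl$ removes the tangential boundary condition on $\pmb{r}$ and replaces it by the normal one.

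Next, $\pmb{r}$ is curl-free on the simply connected domain $\Omega$, so by the exact sequence property of the de Rham complex \emph{without} boundary conditions (rather than the zero-trace complex displayed in the proof of Lemma~\ref{lemma:helmholtz_decomp}) there exists a potential $R \in H^1(\Omega)/\mathbb{R}$ with $\pmb{r} = \nabla R$. Then $\Delta R = \nabla \cdot \nabla R = \nabla \cdot \pmb{r} = \nabla \cdot \pmb{\varphi}$ and $\partial_n R = \pmb{r} \cdot \pmb{n} = 0$, so $R$ solves the stated Neumann problem. The estimates then follow from elliptic regularity on the smooth domain: normalizing $R$ to zero mean gives $\norm{R}_{H^2(\Omega)} \lesssim \norm{\nabla \cdot \pmb{\varphi}}_{L^2(\Omega)}$, whence $\pmb{r} = \nabla R \in \pmb{H}^1(\Omega)$ with $\norm{\pmb{r}}_{H^1(\Omega)} \leq \norm{R}_{H^2(\Omega)}$.

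The point requiring the only real care is the solvability of this Neumann problem. Its compatibility condition $\int_\Omega \nabla \cdot \pmb{\varphi} \, dx = \int_\Gamma \pmb{\varphi} \cdot \pmb{n} \, ds = 0$ is exactly guaranteed by $\pmb{\varphi} \in \HZeroDiv$, and it is precisely this that forces the modding out by constants and the Neumann (rather than Dirichlet) condition in the statement. Thus the main obstacle is not analytic depth but careful bookkeeping: one must track that the trace surviving on $\pmb{r}$ is the normal one and correspondingly invoke the boundary-condition-free complex, so that the compatible Neumann problem, and not the Dirichlet problem of Lemma~\ref{lemma:helmholtz_decomp}, governs the potential.
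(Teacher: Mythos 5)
Your proposal is correct and follows essentially the same route as the paper's own proof: identifying the operators as $L^2(\Omega)$ orthogonal projections, reading off $\nabla\times\pmb{r}=0$ and $\nabla\cdot\pmb{r}=\nabla\cdot\pmb{\varphi}$, obtaining $\pmb{r}\cdot\pmb{n}=0$ from $\nabla\times\HZeroCurl\subset\HZeroDiv$, and invoking the boundary-condition-free exact sequence to produce the potential $R$ solving the Neumann problem. Your explicit remark on the compatibility condition $\int_\Omega\nabla\cdot\pmb{\varphi}\,dx=0$ is a small but accurate addition the paper leaves implicit.
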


\begin{proof}
  Unique solvability as well as $\nabla \times \pmb{r} = 0$ and $\nabla \cdot  \pmb{r} = \nabla \cdot \pmb{\varphi}$ follows by the same arguments as in the proof of Lemma~\ref{lemma:helmholtz_decomp}.
  Since $\pmb{\varphi} \in \HZeroDiv$ and $\pmb{\Pi}^{\mathrm{curl},0}   \pmb{\varphi} \in \nabla \times \HZeroCurl \subset \HZeroDiv$ we find
  \begin{equation*}
    \pmb{r} \cdot \pmb{n} = \pmb{\varphi} \cdot \pmb{n} - \pmb{\Pi}^{\mathrm{curl},0}   \pmb{\varphi} \cdot \pmb{n} = 0.
  \end{equation*}
  Again by the exact sequence
  \begin{equation*}
    \mathbb{R}       \stackrel{\mathrm{id}}{\longrightarrow}
    \HOne            \stackrel{\nabla}{\longrightarrow}
    \HCurl           \stackrel{\nabla \times}{\longrightarrow}
    \HDiv            \stackrel{\nabla \cdot}{\longrightarrow}
    L^2(\Omega)      \stackrel{0}{\longrightarrow}
    \left\{ 0 \right\}
  \end{equation*}
  we can find $R \in \HOne$ such that $\pmb{r} = \nabla R$.
  Finally since $\partial_n R = \nabla R \cdot \pmb{n} = \pmb{r} \cdot \pmb{n} = 0$,
  we find that $R$ solves the asserted equation.
  The Poincar\'e inequality and elliptic regularity theory then give the desired results.
\end{proof}

\begin{lemma}\label{lemma:properties_of_Ih}
  The operator $\IhZero$ satisfies
  for arbitrary $\tilde{\pmb{\varphi}}_h \in \RTBDMZero$ the estimates
  \begin{align}
\label{lemma:properties_of_Ih-a}
    \norm{\pmb{\varphi} - \IhZero \pmb{\varphi}}_{L^2(\Omega)} &\lesssim \norm{\pmb{\varphi} - \tilde{\pmb{\varphi}}_h}_{L^2(\Omega)} + \frac{h}{p_v} \norm{\nabla \cdot (\pmb{\varphi} - \tilde{\pmb{\varphi}}_h) }_{L^2(\Omega)}, \\
\label{lemma:properties_of_Ih-b}
    \norm{\nabla \cdot (\pmb{\varphi} - \IhZero \pmb{\varphi}) }_{L^2(\Omega)} &\leq \norm{\nabla \cdot (\pmb{\varphi} - \tilde{\pmb{\varphi}}_h) }_{L^2(\Omega)}.
  \end{align}
  The same estimates hold true for the operator $\Ih$ for arbitrary $\tilde{\pmb{\varphi}}_h \in \RTBDM$.
\end{lemma}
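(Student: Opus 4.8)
The second estimate \eqref{lemma:properties_of_Ih-b} is the easy one. The constraint equation \eqref{eq:Ih-b} states exactly that $\nabla \cdot \IhZero \pmb{\varphi}$ equals the $L^2(\Omega)$-orthogonal projection $P_h(\nabla \cdot \pmb{\varphi})$ of $\nabla \cdot \pmb{\varphi}$ onto $\nabla \cdot \RTBDMZero$. Hence $\nabla \cdot (\pmb{\varphi} - \IhZero \pmb{\varphi}) = (\mathrm{id} - P_h)(\nabla \cdot \pmb{\varphi})$, and since $\nabla \cdot \tilde{\pmb{\varphi}}_h \in \nabla \cdot \RTBDMZero$ is left invariant by $P_h$, writing $(\mathrm{id}-P_h)(\nabla \cdot \pmb{\varphi}) = (\mathrm{id}-P_h)\nabla \cdot (\pmb{\varphi}-\tilde{\pmb{\varphi}}_h)$ and using that an orthogonal projection is norm reducing gives \eqref{lemma:properties_of_Ih-b} immediately.

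For the first estimate \eqref{lemma:properties_of_Ih-a} write $\pmb{e} = \pmb{\varphi} - \IhZero\pmb{\varphi}$ and abbreviate $g = \nabla\cdot(\pmb{\varphi}-\tilde{\pmb{\varphi}}_h)$. Two structural facts drive the argument. First, by \eqref{lemma:properties_of_Ih-b}, $\norm{\nabla\cdot\pmb{e}}_{L^2(\Omega)} \le \norm{g}_{L^2(\Omega)}$ and $\nabla\cdot\pmb{e} \perp \nabla\cdot\RTBDMZero$. Second, testing the saddle point equation \eqref{eq:Ih-a} with divergence-free discrete fields shows that $\pmb{e}$ is $L^2(\Omega)$-orthogonal to $Z_h \coloneqq \{\pmb{\mu}_h \in \RTBDMZero : \nabla\cdot\pmb{\mu}_h = 0\} = \nabla\times\NedelecZero$. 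The naive best-approximation bound over the feasible set only reproduces the suboptimal estimate $\norm{\pmb{e}}_{L^2(\Omega)} \lesssim \norm{\pmb{\varphi}-\tilde{\pmb{\varphi}}_h}_{L^2(\Omega)} + \norm{g}_{L^2(\Omega)}$, so the extra factor $h/p_v$ must be extracted from a Helmholtz decomposition of $\pmb{e}$. I therefore apply Lemma~\ref{lemma:helmholtz_decomp_zero} to $\pmb{e}\in\HZeroDiv$ and split $\pmb{e} = \pmb{e}_c + \nabla R$ into the $L^2(\Omega)$-orthogonal parts $\pmb{e}_c = \pmb{\Pi}^{\mathrm{curl},0}\pmb{e}$ and $\nabla R$ with $\Delta R = \nabla\cdot\pmb{e}$, $\partial_n R = 0$, $R$ normalized to zero mean, so that $\norm{\pmb{e}}_{L^2(\Omega)}^2 = \norm{\pmb{e}_c}_{L^2(\Omega)}^2 + \norm{\nabla R}_{L^2(\Omega)}^2$.

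The gradient part is straightforward: integrating by parts and using $\nabla\cdot\pmb{e}\perp\nabla\cdot\RTBDMZero$ gives $\norm{\nabla R}_{L^2(\Omega)}^2 = -(\nabla\cdot\pmb{e}, R - P_h R)_{\ltwo}$, and the $hp$ approximation property of the $L^2(\Omega)$-projection onto the zero-mean piecewise polynomial space $\nabla\cdot\RTBDMZero$, together with the Poincar\'e inequality, yields $\norm{\nabla R}_{L^2(\Omega)} \lesssim (h/p_v)\norm{\nabla\cdot\pmb{e}}_{L^2(\Omega)} \le (h/p_v)\norm{g}_{L^2(\Omega)}$, already of the desired order. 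The curl part is the crux. Starting from $\norm{\pmb{e}_c}_{L^2(\Omega)}^2 = (\pmb{e}_c, \pmb{\varphi}-\IhZero\pmb{\varphi})_{\ltwo}$ I insert $\tilde{\pmb{\varphi}}_h$ and retain the discrete remainder $\tilde{\pmb{\varphi}}_h - \IhZero\pmb{\varphi} \in \RTBDMZero$, whose divergence equals $-P_h g$. The plan is to compare this remainder with the commuting $\HDiv$-conforming projection $\pmb{\Pi}^{\div,\star}_{p_v}$ of \cite{melenk-rojik18} applied to the smooth lifting $\nabla\Psi$, where $-\Delta\Psi = P_h g$, $\partial_n\Psi = 0$ (solvable since $P_h g$ has zero mean) with $\norm{\Psi}_{H^2(\Omega)}\lesssim\norm{g}_{L^2(\Omega)}$ by elliptic regularity. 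Since the commuting projection reproduces the divergence $-P_h g$, the difference of the discrete remainder and $\pmb{\Pi}^{\div,\star}_{p_v}\nabla\Psi$ lies in $Z_h$, hence is $L^2(\Omega)$-orthogonal to $\pmb{e}_c$ (one uses $\pmb{e}_c\perp Z_h$, which follows from $\pmb{e}\perp Z_h$ and $\nabla R\perp Z_h$). What remains is $(\pmb{e}_c, \pmb{\Pi}^{\div,\star}_{p_v}\nabla\Psi)_{\ltwo}$; here $(\pmb{e}_c, \nabla\Psi)_{\ltwo}$ vanishes because $\pmb{e}_c$ is divergence-free with vanishing normal trace, while the commuting projection error gives $(\pmb{e}_c, \pmb{\Pi}^{\div,\star}_{p_v}\nabla\Psi - \nabla\Psi)_{\ltwo} \lesssim (h/p_v)\norm{\pmb{e}_c}_{L^2(\Omega)}\norm{g}_{L^2(\Omega)}$. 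Dividing by $\norm{\pmb{e}_c}_{L^2(\Omega)}$ yields $\norm{\pmb{e}_c}_{L^2(\Omega)}\lesssim\norm{\pmb{\varphi}-\tilde{\pmb{\varphi}}_h}_{L^2(\Omega)} + (h/p_v)\norm{g}_{L^2(\Omega)}$, and combining with the gradient part proves \eqref{lemma:properties_of_Ih-a}. The case of $\Ih$ is completely analogous, using Lemma~\ref{lemma:helmholtz_decomp} and replacing the Neumann condition in the auxiliary problem for $\Psi$ by a Dirichlet one. The main obstacle is precisely this curl part: the gain of $h/p_v$ is invisible at the level of the best-approximation property and only emerges from playing the divergence-free structure of $\pmb{e}_c$ against the smooth lifting $\nabla\Psi$ through the commuting projection.
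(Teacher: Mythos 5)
Your proof is correct, but it is organized differently from the paper's. The paper keeps the splitting $\norm{\pmb e}_{L^2(\Omega)}^2 = (\pmb e, \pmb\varphi - \tilde{\pmb\varphi}_h)_{\ltwo} + (\pmb e, \tilde{\pmb\varphi}_h - \IhZero\pmb\varphi)_{\ltwo}$ and applies Lemma~\ref{lemma:helmholtz_decomp_zero} twice --- once continuously, once discretely --- to the \emph{discrete} difference $\tilde{\pmb\varphi}_h - \IhZero\pmb\varphi$; the discrete curl part drops out by the first saddle-point equation, the difference $\pmb r - \pmb r_h$ of the two remainders is handled with the commuting projection $\pmb\Pi^{\div,\star}_{p_v}$ in essentially the way you handle your curl step, and the leftover term $(\pmb e, \pmb r)_{\ltwo}$ requires a second duality argument through an $H^4$-regular lift, with a separate case for $p_v=1$, before Young's inequality closes the estimate. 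You instead decompose the error $\pmb e$ itself orthogonally into $\pmb e_c + \nabla R$, so Pythagoras replaces Young; your gradient part is disposed of directly by the divergence orthogonality (\ref{eq:Ih-b}) plus the $L^2(\Omega)$-projection error of the zero-mean potential $R$, with no duality and no case distinction at $p_v=1$, and your curl part plays the orthogonality of $\pmb e_c$ to $Z_h = \nabla\times\NedelecZero$ (coming from (\ref{eq:Ih-a}) and the Helmholtz splitting) against the commuting projection of the Neumann lift $\nabla\Psi$ of $P_h g$. Both arguments rest on the same external ingredients --- Lemma~\ref{lemma:helmholtz_decomp_zero}, the discrete exact sequence, and the commuting projection of \cite{melenk-rojik18} --- but yours trades the extra power $(h/p_v)^2$ that the paper extracts from its $T_2$ term (and which the lemma does not need) for a shorter, uniform argument. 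If you write this up, make explicit three points that you use and that the paper also relies on in its own version: the compatibility condition $\int_\Omega P_h g = 0$ for the Neumann problem defining $\Psi$ (which you note), the fact that $\pmb\Pi^{\div,\star}_{p_v}$ preserves the vanishing normal trace so that $\pmb\Pi^{\div,\star}_{p_v}\nabla\Psi \in \RTBDMZero$, and the fact that the estimate $\norm{\nabla\Psi - \pmb\Pi^{\div,\star}_{p_v}\nabla\Psi}_{L^2(\Omega)} \lesssim (h/p_v)\norm{\nabla\Psi}_{H^1(\Omega)}$ is applicable because $\nabla\cdot\nabla\Psi = -P_h g$ is a discrete function.
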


\begin{proof}
  Let $\tilde{\pmb{\varphi}}_h \in \RTBDMZero$ be arbitrary.
  Due to the orthogonality relation satisfied by the operator $\IhZero$ the estimate (\ref{lemma:properties_of_Ih-b}) is obvious.
  We have with $\pmb{e} = \pmb{\varphi} - \IhZero \pmb{\varphi}$
  \begin{equation*}
    \norm{\pmb{e}}_{L^2(\Omega)}^2 = (\pmb{e}, \pmb{\varphi} - \tilde{\pmb{\varphi}}_h )_{\ltwo} + (\pmb{e}, \tilde{\pmb{\varphi}}_h - \IhZero \pmb{\varphi})_{\ltwo}.
  \end{equation*}
  In order to treat the second term we apply Lemma~\ref{lemma:helmholtz_decomp_zero} and split the discrete object $\tilde{\pmb{\varphi}}_h - \IhZero \pmb{\varphi} \in \RTBDMZero$ on a discrete and a continuous level. That is,
  \begin{align*}
    \tilde{\pmb{\varphi}}_h - \IhZero \pmb{\varphi} &= \nabla \times \pmb{\mu} + \pmb{r},   \\
    \tilde{\pmb{\varphi}}_h - \IhZero \pmb{\varphi} &= \nabla \times \pmb{\mu}_h + \pmb{r}_h
  \end{align*}
  for certain $\pmb{\mu} \in \HZeroCurl$, $\pmb{r} \in \HZeroDiv$, $\pmb{\mu}_h \in \NedelecZero$, and $\pmb{r}_h \in \RTBDMZero$.
  Since $\nabla \cdot \nabla \times = 0$ we have
  \begin{equation*}
    (\pmb{\varphi} - \IhZero \pmb{\varphi} , \nabla \times \pmb{\mu}_h)_{\ltwo} = 0
  \end{equation*}
  by definition of the operator $\IhZero$ and consequently
  \begin{equation*}
    (\pmb{e}, \tilde{\pmb{\varphi}}_h - \IhZero \pmb{\varphi})_{\ltwo} = (\pmb{e}, \nabla \times \pmb{\mu}_h + \pmb{r}_h )_{\ltwo} = (\pmb{e}, \pmb{r}_h )_{\ltwo} = (\pmb{e}, \pmb{r}_h - \pmb{r} )_{\ltwo} + (\pmb{e}, \pmb{r} )_{\ltwo} \eqqcolon T_1 + T_2.
  \end{equation*}
  \noindent
  \textbf{Treatment of $T_1$}:
  To estimate $T_1$ we first need one of the commuting projection based interpolation operators defined in \cite{melenk-rojik18}.
  Especially the global operator $\pmb{\Pi}^{\div}_{p}$ given in \cite[Remark~{2.10}]{melenk-rojik18}, see also \cite{rojik19}.
  Let therefore $\pmb{\Pi}^{\div, \star}_{p_v}$ denote either the operator $\pmb{\Pi}^{\div}_{p_v - 1}$ if $\RTBDMZero = \RTZero$ or the analogous operator $\pmb{\Pi}^{\div}_{p_v}$ in the case  $\RTBDMZero = \BDMZero$.
  First note that $\nabla \cdot \pmb{r} = \nabla \cdot \pmb{r}_h \in \nabla \cdot \RTBDMZero$.
  By the commuting diagram property of the operator $\pmb{\Pi}^{\div, \star}_{p_v}$ as well as the projection property we therefore have
  \begin{equation*}
    \nabla \cdot (\pmb{\Pi}^{\div, \star}_{p_v} \pmb{r} - \pmb{r}_h) = \pmb{\Pi}^{L^2}_{p_v} (\nabla \cdot \pmb{r}) - \nabla \cdot \pmb{r}_h = \nabla \cdot \pmb{r} - \nabla \cdot \pmb{r}_h = 0.
  \end{equation*}
  By the exact sequence property we therefore have $\pmb{\Pi}^{\div, \star}_{p_v} \pmb{r} - \pmb{r}_h \in \nabla \times \NedelecZero$.
  Furthermore, the definition of $\pmb{r}$ and $\pmb{r}_h$ in Lemma~\ref{lemma:helmholtz_decomp_zero} gives the orthogonality relation $\pmb{r} - \pmb{r}_h \perp \nabla \times \NedelecZero$.
  Putting it all together we have
  \begin{equation*}
    \norm{ \pmb{r} - \pmb{r}_h }_{L^2(\Omega)}^2
    = (\pmb{r} - \pmb{r}_h, \pmb{r} - \pmb{\Pi}^{\div, \star}_{p_v} \pmb{r} )_{\ltwo} + (\pmb{r} - \pmb{r}_h, \pmb{\Pi}^{\div, \star}_{p_v} \pmb{r} - \pmb{r}_h )_{\ltwo}
    = (\pmb{r} - \pmb{r}_h, \pmb{r} - \pmb{\Pi}^{\div, \star}_{p_v} \pmb{r} )_{\ltwo},
  \end{equation*}
  which by the Cauchy-Schwarz inequality gives
  \begin{equation*}
    \norm{ \pmb{r} - \pmb{r}_h }_{L^2(\Omega)} \leq \| \pmb{r} - \pmb{\Pi}^{\div, \star}_{p_v} \pmb{r} \|_{L^2(\Omega)}.
  \end{equation*}
  Since $\nabla \cdot \pmb{r} = \nabla \cdot \pmb{r}_h$ is discrete we may apply \cite[Thm.~{2.8} (vi)]{melenk-rojik18} as well as perform a simple scaling argument to arrive at
  \begin{equation*}
    \| \pmb{r} - \pmb{\Pi}^{\div, \star}_{p_v} \pmb{r} \|_{L^2(\Omega)} \lesssim \frac{h}{p_v} \norm{\pmb{r}}_{H^1(\Omega)}  \lesssim \frac{h}{p_v} \norm{\nabla \cdot (\tilde{\pmb{\varphi}}_h - \IhZero \pmb{\varphi}) }_{L^2(\Omega)},
  \end{equation*}
  where the last estimate is due to the \textsl{a priori} estimate of Lemma~\ref{lemma:helmholtz_decomp_zero}.
  Summarizing we have
  \begin{equation*}
    T_1 \lesssim \frac{h}{p_v} \norm{\pmb{e}}_{L^2(\Omega)} \norm{\nabla \cdot (\tilde{\pmb{\varphi}}_h - \IhZero \pmb{\varphi}) }_{L^2(\Omega)}  \lesssim \frac{h}{p_v} \norm{\pmb{e}}_{L^2(\Omega)} \norm{\nabla \cdot (\pmb{\varphi} - \tilde{\pmb{\varphi}}_h) }_{L^2(\Omega)},
  \end{equation*}
  where the last estimate follows by adding and subtracting $\pmb{\varphi}$, the triangle inequality as well as the second inequality of the present lemma.
  \\
  \noindent
  \textbf{Treatment of $T_2$}:
  The term $T_2$ is treated with a duality argument.
  We select $\pmb{\psi} \in \HDiv$ such that
  \begin{equation*}
    (\nabla \cdot \pmb{v}, \nabla \cdot \pmb{\psi})_{\ltwo} = (\pmb{v}, \pmb{r})_{\ltwo} \qquad \forall \pmb{v} \in \HZeroDiv.
  \end{equation*}
To that end, we note that by Lemma~\ref{lemma:helmholtz_decomp_zero} we have $\pmb{r} = \nabla R$ for some $R \in H^2(\Omega)$.
  Therefore for $\pmb{v} \in \HZeroDiv$ we have
  \begin{equation*}
    (\nabla \cdot \pmb{v}, \nabla \cdot \pmb{\psi})_{\ltwo} =
    (\pmb{v}, \pmb{r})_{\ltwo} =
    (\pmb{v}, \nabla R)_{\ltwo} =
    - (\nabla \cdot \pmb{v}, R)_{\ltwo}
  \end{equation*}
so that the desired $\pmb{\psi}$ is found as $\pmb{\psi}=\nabla w$ with $w$ solving
  \begin{equation*}
  \begin{alignedat}{2}
    - \Delta w &= R  \quad   &&\text{in } \Omega,\\
             w &= 0          &&\text{on } \Gamma. \\
  \end{alignedat}
\end{equation*}
Furthermore, since $R \in H^2(\Omega)$,  elliptic regularity gives $w \in H^4(\Omega)$ and therefore $\pmb{\psi} \in \pmb{H}^3(\Omega)$.
Finally the following estimates hold
\begin{equation}\label{eq:regularity_estimate_psi_p_geq_2}
  \norm{\nabla \cdot \pmb{\psi}}_{H^2(\Omega)}
  \leq \norm{\pmb{\psi}}_{H^3(\Omega)}
  \leq \norm{w}_{H^4(\Omega)}
  \lesssim \norm{R}_{H^2(\Omega)}
  \lesssim \norm{\pmb{r}}_{H^1(\Omega)}
  \lesssim \norm{\nabla \cdot (\tilde{\pmb{\varphi}}_h - \IhZero \pmb{\varphi})}_{L^2(\Omega)},
\end{equation}
due to elliptic regularity and the results of Lemma~\ref{lemma:helmholtz_decomp_zero}.
We therefore have for any $\pmb{\psi}_h \in \RTBDMZero$
\begin{equation*}
  T_2
  = (\pmb{e}, \pmb{r})_{\ltwo}
  = (\nabla \cdot \pmb{e}, \nabla \cdot \pmb{\psi})_{\ltwo}
  = (\nabla \cdot \pmb{e}, \nabla \cdot (\pmb{\psi} - \pmb{\psi}_h))_{\ltwo}
  \leq \norm{\nabla \cdot \pmb{e}}_{L^2(\Omega)} \norm{\nabla \cdot (\pmb{\psi} - \pmb{\psi}_h)}_{L^2(\Omega)},
\end{equation*}
where we used the definition of $T_2$, the duality argument elaborated above, the orthogonality relation of $\IhZero$ to insert any $\pmb{\psi}_h \in \RTBDMZero$, and the Cauchy-Schwarz inequality.
Finally exploiting the \textsl{a priori} estimate of $\pmb{\psi}$ in \eqref{eq:regularity_estimate_psi_p_geq_2} we find for $p_v > 1$ that
\begin{align*}
  T_2
  &\leq
  \norm{\nabla \cdot \pmb{e}}_{L^2(\Omega)} \cdot \inf_{\pmb{\psi}_h \in \RTBDMZero} \norm{\nabla \cdot (\pmb{\psi} - \pmb{\psi}_h)}_{L^2(\Omega)}
  \lesssim  \norm{\nabla \cdot \pmb{e}}_{L^2(\Omega)} (h/p_v)^2 \norm{\nabla \cdot \pmb{\psi}}_{H^2(\Omega)} \\
  &\lesssim \norm{\nabla \cdot \pmb{e}}_{L^2(\Omega)} (h/p_v)^2 \norm{\nabla \cdot (\tilde{\pmb{\varphi}}_h - \IhZero \pmb{\varphi})}_{L^2(\Omega)}.
\end{align*}
In the lowest order case $p_v = 1$ we cannot fully exploit the regularity.
However, we find
\begin{equation}\label{eq:regularity_estimate_psi_p_geq_1}
  \norm{\nabla \cdot \pmb{\psi}}_{H^1(\Omega)}
  \leq \norm{\pmb{\psi}}_{H^2(\Omega)}
  \leq \norm{w}_{H^3(\Omega)}
  \lesssim \norm{R}_{H^1(\Omega)}
  \lesssim \norm{\nabla \cdot (\tilde{\pmb{\varphi}}_h - \IhZero \pmb{\varphi})}_{(H^{1}(\Omega))^\prime},
\end{equation}
Proceeding as above and using estimate \eqref{eq:regularity_estimate_psi_p_geq_1} we find
\begin{align*}
  T_2
  &\leq
  \norm{\nabla \cdot \pmb{e}}_{L^2(\Omega)} \cdot \inf_{\pmb{\psi}_h \in \RTBDMZero} \norm{\nabla \cdot (\pmb{\psi} - \pmb{\psi}_h)}_{L^2(\Omega)}
  \lesssim  \norm{\nabla \cdot \pmb{e}}_{L^2(\Omega)} h/p_v \norm{\nabla \cdot \pmb{\psi}}_{H^1(\Omega)} \\
  &\lesssim \norm{\nabla \cdot \pmb{e}}_{L^2(\Omega)} h/p_v \norm{\nabla \cdot (\tilde{\pmb{\varphi}}_h - \IhZero \pmb{\varphi})}_{(H^{1}(\Omega))^\prime}
  \lesssim \norm{\nabla \cdot \pmb{e}}_{L^2(\Omega)} h/p_v \norm{\tilde{\pmb{\varphi}}_h - \IhZero \pmb{\varphi}}_{L^2(\Omega)}.
\end{align*}
The last last estimate is due to integration by parts and the boundary condition of $\tilde{\pmb{\varphi}}_h - \IhZero \pmb{\varphi}$; in fact
\begin{align*}
  \norm{\nabla \cdot (\tilde{\pmb{\varphi}}_h - \IhZero \pmb{\varphi})}_{(H^{1}(\Omega))^\prime}
  &=
  \sup_{v \in H^1(\Omega)} \frac{|(\nabla \cdot (\tilde{\pmb{\varphi}}_h - \IhZero \pmb{\varphi}), v)_{\ltwo}|}{\norm{v}_{H^1(\Omega)}}
  =
  \sup_{v \in H^1(\Omega)} \frac{|( \tilde{\pmb{\varphi}}_h - \IhZero \pmb{\varphi}, \nabla v)_{\ltwo}|}{\norm{v}_{H^1(\Omega)}} \\
  &\leq
  \norm{\tilde{\pmb{\varphi}}_h - \IhZero \pmb{\varphi}}_{L^2(\Omega)}
\end{align*}
holds.
Putting everything together
we have for $p_v > 1$
\begin{align*}
  \norm{\pmb{e}}_{L^2(\Omega)}^2
  &= (\pmb{e}, \pmb{\varphi} - \tilde{\pmb{\varphi}}_h )_{\ltwo} + (\pmb{e}, \tilde{\pmb{\varphi}}_h - \IhZero \pmb{\varphi})_{\ltwo} \\
  &= (\pmb{e}, \pmb{\varphi} - \tilde{\pmb{\varphi}}_h )_{\ltwo} + T_1 + T_2 \\
  &\lesssim \norm{\pmb{e}}_{L^2(\Omega)} \norm{\pmb{\varphi} - \tilde{\pmb{\varphi}}_h}_{L^2(\Omega)}
    + \frac{h}{p_v} \norm{\pmb{e}}_{L^2(\Omega)} \norm{\nabla \cdot (\pmb{\varphi} - \tilde{\pmb{\varphi}}_h) }_{L^2(\Omega)} \\
    &\qquad + \frac{h^2}{p_v^2} \norm{\nabla \cdot \pmb{e}}_{L^2(\Omega)} \norm{\nabla \cdot (\tilde{\pmb{\varphi}}_h - \IhZero \pmb{\varphi})}_{L^2(\Omega)} \\
  &\lesssim \norm{\pmb{e}}_{L^2(\Omega)} \norm{\pmb{\varphi} - \tilde{\pmb{\varphi}}_h}_{L^2(\Omega)}
    + \frac{h}{p_v} \norm{\pmb{e}}_{L^2(\Omega)} \norm{\nabla \cdot (\pmb{\varphi} - \tilde{\pmb{\varphi}}_h) }_{L^2(\Omega)}
    + \frac{h^2}{p_v^2} \norm{\nabla \cdot (\pmb{\varphi} - \tilde{\pmb{\varphi}}_h)}_{L^2(\Omega)}^2,
\end{align*}
where the last estimate again follows from inserting $\pmb{\varphi}$ and using the second estimate of the present lemma.
Young's inequality then yields the result for the operator $\IhZero$.
The lowest order case is treated analogous.
For the operator $\Ih$ the only difference is that one applies Lemma~\ref{lemma:helmholtz_decomp}
instead of Lemma~\ref{lemma:helmholtz_decomp_zero} and perform the duality argument on all of $\HDiv$ instead of $\HZeroDiv$.
Here it is important to note that the potential $R$ given by Lemma~\ref{lemma:helmholtz_decomp} satisfies homogeneous boundary conditions,
so that the boundary term vanishes in the partial integration.
\end{proof}

\begin{remark}
  $\HDiv$-conforming approximation operators similar to $\Ih$ and $\IhZero$ are presented in
  \cite{ern-gudi-smears-vohralik19}, where the focus is on a patchwise construction rather than the
  (global) orthogonalities (\ref{eq:IhZero-b}), (\ref{eq:Ih-b}).
  \eremk
\end{remark}

\begin{theorem}\label{theorem:e_phi_optimal_l2_error_estimate}
  Let $\Gamma$ be smooth and $( \pmb{\varphi}_h , u_h )$ be the least squares approximation of $( \pmb{\varphi} , u )$. Furthermore, let  $e^u = u-u_h$ and $\pmb{e}^{\pmb{\varphi}} = \pmb{\varphi}-\pmb{\varphi}_h$. Then, for any $\tilde{u}_h \in \Sp$, $\tilde{\pmb{\varphi}}_h \in \RTBDMZero$,
    \begin{align*}
      \norm{\pmb{e}^{\pmb{\varphi}}}_{L^2(\Omega)}
      &\lesssim \frac{h}{p} \| ( \pmb{e}^{\pmb{\varphi}} , e^u ) \|_b
        + \| \pmb{\varphi} - \tilde{\pmb{\varphi}}_h \|_{L^2(\Omega)}
        + \frac{h}{p} \norm{\nabla \cdot (\pmb{\varphi} - \tilde{\pmb{\varphi}}_h) }_{L^2(\Omega)} \\
      &\lesssim
        \frac{h}{p} \norm{u - \tilde{u}_h}_{H^1(\Omega)} +
        \| \pmb{\varphi} - \tilde{\pmb{\varphi}}_h \|_{L^2(\Omega)} +
        \frac{h}{p} \| \nabla \cdot (\pmb{\varphi} - \tilde{\pmb{\varphi}}_h) \|_{L^2(\Omega)}.
    \end{align*}
\end{theorem}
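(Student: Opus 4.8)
The plan is to compare $\pmb{\varphi}_h$ not with $\pmb{\varphi}$ itself but with its constrained interpolant, writing $\pmb{e}^{\pmb{\varphi}} = (\pmb{\varphi} - \IhZero\pmb{\varphi}) + \pmb{\delta}_h$ with $\pmb{\delta}_h \coloneqq \IhZero\pmb{\varphi} - \pmb{\varphi}_h \in \RTBDMZero$. The approximation part is handled immediately by Lemma~\ref{lemma:properties_of_Ih}, which delivers precisely $\norm{\pmb{\varphi} - \tilde{\pmb{\varphi}}_h}_{L^2(\Omega)} + \frac{h}{p}\norm{\nabla\cdot(\pmb{\varphi} - \tilde{\pmb{\varphi}}_h)}_{L^2(\Omega)}$ (using $p\le p_v$). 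Everything then reduces to proving the same bound for the discrete remainder $\norm{\pmb{\delta}_h}_{L^2(\Omega)}$.

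For $\pmb{\delta}_h$ I would run an Aubin–Nitsche argument based on Theorem~\ref{theorem:duality_argument_phi}: take the dual solution $(\pmb{\psi}, v)$ associated with $\pmb{\eta} = \pmb{\delta}_h$, so that $\norm{\pmb{\delta}_h}_{L^2(\Omega)}^2 = b((\pmb{\delta}_h, e^u), (\pmb{\psi}, v))$ (the scalar slot is free), with $\norm{v}_{H^3(\Omega)} + \norm{\pmb{\psi}}_{L^2(\Omega)} + \norm{\nabla\cdot\pmb{\psi}}_{H^1(\Omega)} \lesssim \norm{\pmb{\delta}_h}_{L^2(\Omega)}$. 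Using $\pmb{\delta}_h = \pmb{e}^{\pmb{\varphi}} - (\pmb{\varphi}-\IhZero\pmb{\varphi})$ I split this into $I - II$ with $I = b((\pmb{e}^{\pmb{\varphi}}, e^u), (\pmb{\psi}, v))$ and $II = b((\pmb{\varphi}-\IhZero\pmb{\varphi}, 0), (\pmb{\psi}, v))$. By the defining relation of the dual, $II$ collapses to $(\pmb{\varphi}-\IhZero\pmb{\varphi}, \pmb{\delta}_h)_{\ltwo}$, hence is bounded by $\norm{\pmb{\varphi}-\IhZero\pmb{\varphi}}_{L^2(\Omega)}\norm{\pmb{\delta}_h}_{L^2(\Omega)}$ and absorbed into the two approximation terms via Lemma~\ref{lemma:properties_of_Ih}.

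The core is $I$, where Galerkin orthogonality allows subtraction of any discrete pair; I choose $\tilde{\pmb{\psi}}_h = \IhZero\pmb{\psi}$ and $\tilde{v}_h$ a good approximation of $v \in H^3(\Omega)$. Two facts drive the estimate. Since $\IhZero$ is a projection (Lemma~\ref{lemma:Ih_well_defined}) and, by the structure of the dual (cf.\ the proof of Theorem~\ref{theorem:duality_argument_phi}), $\pmb{\psi} = \pmb{\delta}_h + \nabla(z-v)$ with $\pmb{\delta}_h$ discrete, one has $\pmb{\psi} - \IhZero\pmb{\psi} = \nabla(z-v) - \IhZero\nabla(z-v)$; and the divergence-orthogonality of $\IhZero$ gives $\norm{\nabla\cdot(\pmb{\psi}-\IhZero\pmb{\psi})}_{L^2(\Omega)} \lesssim \frac{h}{p}\norm{\nabla\cdot\pmb{\psi}}_{H^1(\Omega)} \lesssim \frac{h}{p}\norm{\pmb{\delta}_h}_{L^2(\Omega)}$. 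Expanding $I$, the contributions pairing $\nabla\cdot\pmb{e}^{\pmb{\varphi}} + \gamma e^u$ with $\nabla\cdot(\pmb{\psi}-\IhZero\pmb{\psi}) + \gamma(v-\tilde{v}_h)$, and $\nabla e^u + \pmb{e}^{\pmb{\varphi}}$ with $\nabla(v-\tilde{v}_h)$, are all of order $\frac{h}{p}\norm{(\pmb{e}^{\pmb{\varphi}}, e^u)}_b\,\norm{\pmb{\delta}_h}_{L^2(\Omega)}$ by the $H^3$/$H^1$ regularity.

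The genuinely delicate term is $(\nabla e^u + \pmb{e}^{\pmb{\varphi}}, \pmb{\psi}-\IhZero\pmb{\psi})_{\ltwo}$, whose second factor lies only in $L^2(\Omega)$ and a priori carries no rate; this is the step I expect to be the main obstacle. I would split $\nabla e^u + \pmb{e}^{\pmb{\varphi}} = \nabla e^u + (\pmb{\varphi}-\IhZero\pmb{\varphi}) + \pmb{\delta}_h$ and treat the pieces separately. Integrating by parts, $(\nabla e^u, \pmb{\psi}-\IhZero\pmb{\psi})_{\ltwo} = -(e^u, \nabla\cdot(\pmb{\psi}-\IhZero\pmb{\psi}))_{\ltwo}$ together with the suboptimal scalar bound $\norm{e^u}_{L^2(\Omega)} \lesssim \frac{h}{p}\norm{(\pmb{e}^{\pmb{\varphi}}, e^u)}_b$ of Lemma~\ref{lemma:e_u_suboptimal_l2_error_estimate} produces an $(h/p)^2$ term; the piece $(\pmb{\varphi}-\IhZero\pmb{\varphi}, \pmb{\psi}-\IhZero\pmb{\psi})_{\ltwo}$ is estimated by $\norm{\pmb{\varphi}-\IhZero\pmb{\varphi}}_{L^2(\Omega)}\norm{\pmb{\delta}_h}_{L^2(\Omega)}$ using only the crude bound $\norm{\pmb{\psi}-\IhZero\pmb{\psi}}_{L^2(\Omega)} \lesssim \norm{\pmb{\delta}_h}_{L^2(\Omega)}$. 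For the decisive piece $(\pmb{\delta}_h, \pmb{\psi}-\IhZero\pmb{\psi})_{\ltwo}$ I upgrade the regularity of $z$: because $\nabla\cdot\pmb{\delta}_h \in L^2(\Omega)$, elliptic regularity gives $z, z-v \in H^2(\Omega)$ with $\norm{z-v}_{H^2(\Omega)} \lesssim \norm{\nabla\cdot\pmb{\delta}_h}_{L^2(\Omega)} + \norm{\pmb{\delta}_h}_{L^2(\Omega)} \lesssim \norm{(\pmb{e}^{\pmb{\varphi}}, e^u)}_b + \norm{\pmb{\delta}_h}_{L^2(\Omega)}$, the last step using that $\nabla\cdot\pmb{\delta}_h$ equals the $L^2(\Omega)$-projection of $\nabla\cdot\pmb{e}^{\pmb{\varphi}}$ and $\norm{\nabla\cdot\pmb{e}^{\pmb{\varphi}}}_{L^2(\Omega)} \lesssim \norm{(\pmb{e}^{\pmb{\varphi}}, e^u)}_b$. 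Then Lemma~\ref{lemma:properties_of_Ih} yields $\norm{\pmb{\psi}-\IhZero\pmb{\psi}}_{L^2(\Omega)} \lesssim \frac{h}{p}\norm{z-v}_{H^2(\Omega)}$, so that $(\pmb{\delta}_h, \pmb{\psi}-\IhZero\pmb{\psi})_{\ltwo} \lesssim \frac{h}{p}\norm{(\pmb{e}^{\pmb{\varphi}}, e^u)}_b\norm{\pmb{\delta}_h}_{L^2(\Omega)} + \frac{h}{p}\norm{\pmb{\delta}_h}_{L^2(\Omega)}^2$, the last term being absorbed on the left for $h/p$ small. Collecting all contributions and dividing by $\norm{\pmb{\delta}_h}_{L^2(\Omega)}$ gives the estimate for the discrete part and hence the first asserted inequality; the second follows from the quasi-optimality \eqref{eq:cea-lemma} and the norm equivalence of Theorem~\ref{theorem:norm_equivalence}.
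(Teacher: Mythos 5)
Your strategy is genuinely different from the paper's and most of it checks out. The paper applies the duality argument of Theorem~\ref{theorem:duality_argument_phi} directly to $\pmb{\eta}=\pmb{e}^{\pmb{\varphi}}$ and then attacks the critical term $(\pmb{e}^{\pmb{\varphi}},\pmb{\psi}-\tilde{\pmb{\psi}}_h)_{\ltwo}$ by a Helmholtz decomposition $\pmb{\psi}=\nabla\times\pmb{\rho}+\nabla z$ with $z\in H^3(\Omega)$: the gradient part has full divergence regularity, the curl part is handled by the $L^2$-bounded projection $\pmb{\Pi}^{\mathrm{curl},0}_h$ paired with the best-approximation error of $\pmb{\varphi}$, and the discrete interaction is controlled by the discrete Helmholtz decomposition of Lemma~\ref{lemma:helmholtz_decomp_zero}. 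You instead pose the dual problem for the discrete remainder $\pmb{\delta}_h=\IhZero\pmb{\varphi}-\pmb{\varphi}_h$, so that $\pmb{\psi}=\pmb{\delta}_h+\nabla(z-v)$ and the projection property of $\IhZero$ turns $\pmb{\psi}-\IhZero\pmb{\psi}$ into a pure gradient remainder with an $H^2$ potential; this is an elegant way to sidestep the low regularity $\pmb{\psi}\in\pmb{L}^2(\Omega)$ and, if completed, would buy a unified treatment of Theorems \ref{theorem:e_phi_optimal_l2_error_estimate} and \ref{theorem:e_u_optimal_l2_error_estimate} through the single operator $\IhZero$. The algebra is correct: the identity $II=(\pmb{\varphi}-\IhZero\pmb{\varphi},\pmb{\delta}_h)_{\ltwo}$, the bound $\norm{\nabla\cdot\pmb{\delta}_h}_{L^2(\Omega)}\leq\norm{\nabla\cdot\pmb{e}^{\pmb{\varphi}}}_{L^2(\Omega)}$ via the $L^2$-projection property of $\IhZero$, the $H^2$ bound for $z-v$, and the final absorption (harmless, since for $h/p$ bounded below the assertion is trivial by norm equivalence) are all fine.

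The one step that is not justified as written is the claim that Lemma~\ref{lemma:properties_of_Ih} ``yields'' $\norm{\pmb{\psi}-\IhZero\pmb{\psi}}_{L^2(\Omega)}\lesssim\frac{h}{p}\norm{z-v}_{H^2(\Omega)}$. The lemma only reduces this to finding some $\tilde{\pmb{\chi}}_h\in\RTBDMZero$ with $\norm{\nabla(z-v)-\tilde{\pmb{\chi}}_h}_{L^2(\Omega)}\lesssim\frac{h}{p}\norm{z-v}_{H^2(\Omega)}$ while keeping $\norm{\nabla\cdot(\nabla(z-v)-\tilde{\pmb{\chi}}_h)}_{L^2(\Omega)}\lesssim\norm{z-v}_{H^2(\Omega)}$, i.e., a $p$-explicit $L^2$ approximation result in $\RTBDMZero$ for an $\pmb{H}^1$ field whose divergence $\Delta(z-v)$ is only in $L^2(\Omega)$. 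In the pure $h$-version this is the classical Raviart--Thomas/BDM interpolation estimate and the argument closes; in the $hp$ setting it requires the commuting operators of \cite{melenk-rojik18} under strictly less divergence regularity than the paper ever invokes them for (the paper applies them only to fields with discrete or $H^1$ divergence, and its Helmholtz-splitting architecture exists precisely to avoid the situation you are in). You could repair this within your framework by splitting $\nabla\cdot\nabla(z-v)=\gamma(z-v)+(1-\gamma)z-\nabla\cdot\pmb{\delta}_h$ into an $H^1$ part and a discrete part and treating them separately, but that essentially reconstructs the machinery of Lemmas \ref{lemma:helmholtz_decomp_zero} and \ref{lemma:properties_of_Ih}. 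As it stands, the proposal proves the $h$-version of the theorem and leaves a gap in the $p$-dependence of exactly one term.
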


\begin{proof}
  Let $(\pmb{\psi}, v) \in \HZeroDiv \times \HOne$ denote the dual solution given by Theorem~\ref{theorem:duality_argument_phi} applied to $\pmb{\eta} = \pmb{e}^{\pmb{\varphi}}$.
  Theorem~\ref{theorem:duality_argument_phi} gives $\pmb{\psi} \in \pmb{L}^2(\Omega)$, $\nabla \cdot \pmb{\psi} \in H^1(\Omega)$, and $v \in H^3(\Omega)$.
  Due to the Galerkin orthogonality we have for any $( \tilde{\pmb{\psi}}_h , \tilde{v}_h  )$
  \begin{equation*}
      \norm{\pmb{e}^{\pmb{\varphi}}}_{L^2(\Omega)}^2
      = b( ( \pmb{e}^{\pmb{\varphi}} , e^u ) , ( \pmb{\psi} , v ) )
      = b( ( \pmb{e}^{\pmb{\varphi}} , e^u ) , ( \pmb{\psi} - \tilde{\pmb{\psi}}_h , v - \tilde{v}_h ) ).
  \end{equation*}
  We now estimate all terms in the above:
  \begin{equation*}
  \begin{alignedat}{2}
    ( \nabla e^u + \pmb{e}^{\pmb{\varphi}} , \nabla (v - \tilde{v}_h) )_{\ltwo}
      &\leq \| ( \pmb{e}^{\pmb{\varphi}} , e^u ) \|_b  \| \nabla (v - \tilde{v}_h) \|_{L^2(\Omega)}, \\
    ( \nabla \cdot \pmb{e}^{\pmb{\varphi}} + \gamma e^u , \nabla \cdot (\pmb{\psi} - \tilde{\pmb{\psi}}_h) + \gamma (v - \tilde{v}_h) )_{\ltwo}
      &\lesssim \| ( \pmb{e}^{\pmb{\varphi}} , e^u ) \|_b  \left[ \| \nabla \cdot ( \pmb{\psi} - \tilde{\pmb{\psi}}_h )  \|_{L^2(\Omega)} + \norm{ v - \tilde{v}_h }_{L^2(\Omega)} \right], \\
    ( \nabla e^u , \pmb{\psi} - \tilde{\pmb{\psi}}_h )_{\ltwo}
      = - ( e^u , \nabla \cdot ( \pmb{\psi} - \tilde{\pmb{\psi}}_h ) )_{\ltwo}
      &\leq \norm{e^u}_{L^2(\Omega)} \| \nabla \cdot ( \pmb{\psi} - \tilde{\pmb{\psi}}_h  ) \|_{L^2(\Omega)}.
  \end{alignedat}
  \end{equation*}
  Therefore, we conclude that
  \begin{equation}\label{eq:e_phi_intermediate_estimate_1}
      \norm{\pmb{e}^{\pmb{\varphi}}}_{L^2(\Omega)}^2
      \lesssim \| ( \pmb{e}^{\pmb{\varphi}} , e^u ) \|_b  \left[ \| \nabla \cdot ( \pmb{\psi} - \tilde{\pmb{\psi}}_h ) \|_{L^2(\Omega)} + \norm{ v - \tilde{v}_h }_{H^1(\Omega)} \right]
      + (\pmb{e}^{\pmb{\varphi}} , \pmb{\psi} - \tilde{\pmb{\psi}}_h )_{\ltwo},
  \end{equation}
  the limiting term being for now the last one.
  To overcome the lack of regularity of $\pmb{\psi}$ we perform a Helmholtz decomposition.
  In fact since $\pmb{\psi} \in \HZeroDiv$ as well as $\nabla \cdot \pmb{\psi} \in \HOne$
  there exist $\pmb{\rho} \in \HZeroCurl$ and $z \in H^3(\Omega)$ such that $\pmb{\psi} = \nabla \times \pmb{\rho} + \nabla z$.
  The construction is as follows:
  Let $z \in \HOne$ solve
  \begin{equation*}
  \begin{alignedat}{2}
    - \Delta z     &= - \nabla \cdot \pmb{\psi} \quad  &&\text{in } \Omega, \\
      \partial_n z &= 0                                   &&\text{on } \Gamma.
  \end{alignedat}
  \end{equation*}
  Since $\nabla \cdot (\pmb{\psi} - \nabla z) = 0$ as well as $(\pmb{\psi} - \nabla z) \cdot \pmb{n} = 0$ by construction,
  the exact sequence property of the employed spaces allows for the existence of $\pmb{\rho} \in \HZeroCurl$ such that $\pmb{\psi} - \nabla z = \nabla \times \pmb{\rho}$.
  Finally the following estimates hold due to the \textsl{a priori} estimate of the Lax-Milgram theorem and partial integration for the first estimate, elliptic regularity theory for the second, and the triangle inequality together with the first estimate for the third one:
  \begin{align*}
    \norm{z}_{H^1(\Omega)}                        &\lesssim \norm{\nabla \cdot \pmb{\psi}}_{(H^1(\Omega))^\prime} \leq \norm{\pmb{\psi}}_{L^2(\Omega)}, \\
    \norm{z}_{H^3(\Omega)}                        &\lesssim \norm{\nabla \cdot \pmb{\psi}}_{H^1(\Omega)}, \\
    \norm{\nabla \times \pmb{\rho}}_{L^2(\Omega)} &\leq \norm{\pmb{\psi}}_{L^2(\Omega)} + \norm{\nabla z}_{L^2(\Omega)} \lesssim \norm{\pmb{\psi}}_{L^2(\Omega)}.
  \end{align*}
  We now continue estimating (\ref{eq:e_phi_intermediate_estimate_1}) by applying the Helmholtz decomposition.
  For any $\tilde{\pmb{\psi}}_h^c$,  $\tilde{\pmb{\psi}}_h^g \in \RTBDMZero$ we have with $\tilde{\pmb{\psi}}_h = \tilde{\pmb{\psi}}_h^c + \tilde{\pmb{\psi}}_h^g$
  \begin{equation*}
    (\pmb{e}^{\pmb{\varphi}} , \pmb{\psi} - \tilde{\pmb{\psi}}_h )_{\ltwo}
    = (\pmb{e}^{\pmb{\varphi}} , \nabla \times \pmb{\rho} - \tilde{\pmb{\psi}}_h^c )_{\ltwo} + (\pmb{e}^{\pmb{\varphi}} , \nabla z - \tilde{\pmb{\psi}}_h^g )_{\ltwo}
    \eqqcolon T^c + T^g.
  \end{equation*}
  \noindent
  \textbf{Treatment of $T^g$}:
  By the Cauchy-Schwarz inequality we have
  \begin{equation*}
    T^g = (\pmb{e}^{\pmb{\varphi}} , \nabla z - \tilde{\pmb{\psi}}_h^g )_{\ltwo}
    \leq \norm{\pmb{e}^{\pmb{\varphi}}}_{L^2(\Omega)} \| \nabla z - \tilde{\pmb{\psi}}_h^g \|_{L^2(\Omega)}.
  \end{equation*}
  \textbf{Treatment of $T^c$}:
  For any $\tilde{\pmb{\varphi}}_h \in \RTBDMZero$ we have
  \begin{align*}
    T^c
    &= (\pmb{e}^{\pmb{\varphi}} , \nabla \times \pmb{\rho} - \tilde{\pmb{\psi}}_h^c )_{\ltwo} \\
    &= (\pmb{\varphi} - \tilde{\pmb{\varphi}}_h , \nabla \times \pmb{\rho} - \tilde{\pmb{\psi}}_h^c )_{\ltwo}
     + (\tilde{\pmb{\varphi}}_h - \pmb{\varphi}_h , \nabla \times \pmb{\rho} - \tilde{\pmb{\psi}}_h^c )_{\ltwo} \eqqcolon T^c_1 + T^c_2.
  \end{align*}
  \textbf{Treatment of $T^c_1$}:
  By the Cauchy-Schwarz inequality we have
  \begin{equation*}
    T^c_1 = (\pmb{\varphi} - \tilde{\pmb{\varphi}}_h , \nabla \times \pmb{\rho} - \tilde{\pmb{\psi}}_h^c )_{\ltwo}
    \leq \| \pmb{\varphi} - \tilde{\pmb{\varphi}}_h \|_{L^2(\Omega)} \| \nabla \times \pmb{\rho} - \tilde{\pmb{\psi}}_h^c \|_{L^2(\Omega)}.
  \end{equation*}
  \textbf{Treatment of $T^c_2$}:
  In order to treat $T^c_2$ we proceed as in the proof of Lemma~\ref{lemma:properties_of_Ih} and apply Lemma~\ref{lemma:helmholtz_decomp_zero} to split the discrete object $\tilde{\pmb{\varphi}}_h - \pmb{\varphi}_h \in \RTBDMZero$ on a discrete and a continuous level:
  \begin{align*}
    \tilde{\pmb{\varphi}}_h - \pmb{\varphi}_h &= \nabla \times \pmb{\mu} + \pmb{r},   \\
    \tilde{\pmb{\varphi}}_h - \pmb{\varphi}_h &= \nabla \times \pmb{\mu}_h + \pmb{r}_h,
  \end{align*}
  for certain $\pmb{\mu} \in \HZeroCurl$, $\pmb{r} \in \HZeroDiv$, $\pmb{\mu}_h \in \NedelecZero$, and $\pmb{r}_h \in \RTBDMZero$.
  We now choose $\tilde{\pmb{\psi}}_h^c = \pmb{\Pi}^{\mathrm{curl},0}_h \nabla \times \pmb{\rho} $ given by Lemma~\ref{lemma:helmholtz_decomp_zero}.
  Exploiting the definition of the operator $\pmb{\Pi}^{\mathrm{curl},0}_h$ we find
  \begin{align*}
    T^c_2
    &= (\tilde{\pmb{\varphi}}_h - \pmb{\varphi}_h , \nabla \times \pmb{\rho} - \tilde{\pmb{\psi}}_h^c )_{\ltwo} \\
    &= \underbrace{(\nabla \times \pmb{\mu}_h , \nabla \times \pmb{\rho} - \pmb{\Pi}^{\mathrm{curl},0}_h \nabla \times \pmb{\rho} )_{\ltwo}}_{=0}
     + (\pmb{r}_h , \nabla \times \pmb{\rho} - \pmb{\Pi}^{\mathrm{curl},0}_h \nabla \times \pmb{\rho} )_{\ltwo} \\
    &= (\pmb{r}_h - \pmb{r}, \nabla \times \pmb{\rho} - \pmb{\Pi}^{\mathrm{curl},0}_h \nabla \times \pmb{\rho} )_{\ltwo} + (\pmb{r}, \nabla \times \pmb{\rho} - \pmb{\Pi}^{\mathrm{curl},0}_h \nabla \times \pmb{\rho} )_{\ltwo} \\
    &\eqqcolon T_1 + T_2.
  \end{align*}
  \textbf{Treatment of $T_1$}:
  With the same notation as in the proof of Lemma~\ref{lemma:properties_of_Ih} and with exactly the same arguments we have
  \begin{equation*}
    \| \pmb{r} - \pmb{r}_h \|_{L^2(\Omega)} \lesssim \frac{h}{p_v} \norm{\pmb{r}}_{H^1(\Omega)}  \lesssim \frac{h}{p_v} \norm{\nabla \cdot (\tilde{\pmb{\varphi}}_h - \pmb{\varphi}_h) }_{L^2(\Omega)}.
  \end{equation*}
  By the Cauchy-Schwarz inequality we have
  \begin{equation*}
    T_1
    \lesssim \frac{h}{p_v} \norm{\nabla \cdot (\tilde{\pmb{\varphi}}_h - \pmb{\varphi}_h) }_{L^2(\Omega)} \| \nabla \times \pmb{\rho} - \pmb{\Pi}^{\mathrm{curl},0}_h \nabla \times \pmb{\rho} \|_{L^2(\Omega)}
    \lesssim \frac{h}{p_v} \norm{\nabla \cdot (\tilde{\pmb{\varphi}}_h - \pmb{\varphi}_h) }_{L^2(\Omega)} \| \nabla \times \pmb{\rho} \|_{L^2(\Omega)},
  \end{equation*}
  where the last estimate follows from the fact that
  \begin{equation*}
    \| \nabla \times \pmb{\rho} - \pmb{\Pi}^{\mathrm{curl},0}_h \nabla \times \pmb{\rho} \|_{L^2(\Omega)} \leq
    \| \nabla \times \pmb{\rho} - \nabla \times \tilde{\pmb{\rho}}_h \|_{L^2(\Omega)}
  \end{equation*}
  for any $\tilde{\pmb{\rho}}_h \in \NedelecZero$ since it is a projection.
  Finally inserting $\pmb{\varphi}$ and applying the triangle inequality as well as estimating $\norm{\nabla \cdot (\pmb{\varphi} - \pmb{\varphi}_h) }_{L^2(\Omega)}$ by $\| (e^u, \pmb{e}^{\pmb{\varphi}}) \|_b$ we find
  \begin{equation*}
    T_1
    \lesssim \frac{h}{p_v} \norm{\nabla \cdot (\pmb{\varphi} - \tilde{\pmb{\varphi}}_h) }_{L^2(\Omega)} \| \nabla \times \pmb{\rho} \|_{L^2(\Omega)} + \frac{h}{p_v} \| ( \pmb{e}^{\pmb{\varphi}} , e^u ) \|_b \| \nabla \times \pmb{\rho} \|_{L^2(\Omega)}.
  \end{equation*}
  \\
  \noindent
  \textbf{Treatment of $T_2$}: Note again that $\pmb{\rho} \in \HZeroCurl$ and the fact that $\pmb{\Pi}^{\mathrm{curl},0}_h$ maps into $\nabla \times \NedelecZero$.
  Therefore, we can write $\nabla \times \pmb{\rho} - \pmb{\Pi}^{\mathrm{curl},0}_h \nabla \times \pmb{\rho} = \nabla \times \widehat{\pmb{\rho}}$ for some $\widehat{\pmb{\rho}} \in \HZeroCurl$
  and the boundary terms consequently vanish in the following integration by parts
  \begin{equation*}
    T_2
    = (\pmb{r}, \nabla \times \widehat{\pmb{\rho}} )_{\ltwo}
    = (\nabla \times \pmb{r}, \widehat{\pmb{\rho}} )_{\ltwo}.
  \end{equation*}
  Finally, $T_2 = 0$, since
$ \nabla \times \pmb{r} = 0 $ by Lemma~\ref{lemma:helmholtz_decomp_zero}.
  \\
  \noindent
  \textbf{Collecting all the terms}: Collecting the terms together with the estimate $\norm{\nabla \times \pmb{\rho}}_{L^2(\Omega)} \lesssim \norm{\pmb{\psi}}_{L^2(\Omega)} \lesssim \norm{\pmb{e}^{\pmb{\varphi}}}_{L^2(\Omega)}$ from the Helmholtz decomposition and the regularity estimates of Lemma~\ref{theorem:duality_argument_phi} we find
  \begin{equation}\label{eq:e_phi_intermediate_estimate_2}
    \begin{alignedat}{1}
      (\pmb{e}^{\pmb{\varphi}} , \pmb{\psi} - \tilde{\pmb{\psi}}_h )_{\ltwo}
      \lesssim \Bigg[ \| \nabla z &- \tilde{\pmb{\psi}}_h^g \|_{L^2(\Omega)}
       + \| \pmb{\varphi} - \tilde{\pmb{\varphi}}_h \|_{L^2(\Omega)}\\
       &+ \frac{h}{p_v} \norm{\nabla \cdot (\pmb{\varphi} - \tilde{\pmb{\varphi}}_h) }_{L^2(\Omega)}
       + \frac{h}{p_v} \| ( \pmb{e}^{\pmb{\varphi}} , e^u ) \|_b \Bigg] \norm{\pmb{e}^{\pmb{\varphi}}}_{L^2(\Omega)}.
    \end{alignedat}
  \end{equation}
  Since $\tilde{\pmb{\psi}}_h^c =  \pmb{\Pi}^{\mathrm{curl},0}_h \nabla \times \pmb{\rho} \in \nabla \times \NedelecZero$ we have
  \begin{equation*}
    \| \nabla \cdot ( \pmb{\psi} - \tilde{\pmb{\psi}}_h ) ) \|_{L^2(\Omega)} = \| \nabla \cdot ( \nabla z - \tilde{\pmb{\psi}}_h^g ) ) \|_{L^2(\Omega)}.
  \end{equation*}
  Due to the regularity of $z \in H^3(\Omega)$ we can find $\tilde{\pmb{\psi}}_h^g \in \RTBDMZero$ such that
  \begin{equation*}
    \| \nabla z - \tilde{\pmb{\psi}}_h^g  \|_{\HDiv}
    \lesssim
      \frac{h}{p_v} \norm{\nabla z}_{\pmb{H}^1(\Omega, \div)}
      \lesssim \frac{h}{p_v} \norm{\nabla \cdot \pmb{\psi}}_{H^1(\Omega)}
      \lesssim \frac{h}{p_v} \norm{\pmb{e}^{\pmb{\varphi}}}_{L^2(\Omega)}
      \lesssim \frac{h}{p_v} \| ( \pmb{e}^{\pmb{\varphi}} , e^u ) \|_b.
  \end{equation*}
  Therefore, estimate (\ref{eq:e_phi_intermediate_estimate_2}) can be summarized as follows:
  \begin{equation}\label{eq:e_phi_intermediate_estimate_3}
    (\pmb{e}^{\pmb{\varphi}} , \pmb{\psi} - \tilde{\pmb{\psi}}_h )_{\ltwo}
    \lesssim \left[\frac{h}{p_v} \| (e^u, \pmb{e}^{\pmb{\varphi}}) \|_b
    + \| \pmb{\varphi} - \tilde{\pmb{\varphi}}_h \|_{L^2(\Omega)}
    + \frac{h}{p_v} \norm{\nabla \cdot (\pmb{\varphi} - \tilde{\pmb{\varphi}}_h) }_{L^2(\Omega)} \right] \norm{\pmb{e}^{\pmb{\varphi}}}_{L^2(\Omega)}.
  \end{equation}
  Again due to the regularity of $v \in H^3(\Omega)$ we can find $\tilde{v}_h \in \Sp$ such that
  \begin{equation*}
    \| v - \tilde{v}_h  \|_{H^1(\Omega)}
    \lesssim \frac{h}{p_s} \norm{v}_{H^2(\Omega)}
    \lesssim \frac{h}{p_s} \norm{\pmb{e}^{\pmb{\varphi}}}_{L^2(\Omega)}.
  \end{equation*}
  Finally, summarizing the estimates (\ref{eq:e_phi_intermediate_estimate_1}) and (\ref{eq:e_phi_intermediate_estimate_3}) and again using
  \begin{equation*}
    \| \nabla \cdot ( \pmb{\psi} - \tilde{\pmb{\psi}}_h ) ) \|_{L^2(\Omega)}
      = \| \nabla \cdot ( \nabla z - \tilde{\pmb{\psi}}_h^g ) ) \|_{L^2(\Omega)}
      \lesssim \frac{h}{p_v} \| ( \pmb{e}^{\pmb{\varphi}} , e^u ) \|_b
  \end{equation*}
  we find
  \begin{equation*}
      \norm{\pmb{e}^{\pmb{\varphi}}}_{L^2(\Omega)}^2
      \lesssim \left[\frac{h}{p} \| ( \pmb{e}^{\pmb{\varphi}} , e^u ) \|_b
      + \| \pmb{\varphi} - \tilde{\pmb{\varphi}}_h \|_{L^2(\Omega)}
      + \frac{h}{p} \norm{\nabla \cdot (\pmb{\varphi} - \tilde{\pmb{\varphi}}_h) }_{L^2(\Omega)} \right] \norm{\pmb{e}^{\pmb{\varphi}}}_{L^2(\Omega)}.
  \end{equation*}
  Canceling one power of $\norm{\pmb{e}^{\pmb{\varphi}}}_{L^2(\Omega)}$ then yields the first estimate. The second one follows again by the fact that the least squares approximation is the projection with respect to $b$ and the norm equivalence given in Theorem~\ref{theorem:norm_equivalence}.
\end{proof}

\begin{lemma}\label{lemma:convergence_of_dual_solution_grad_u}
  Let $\Gamma$ be smooth and $( \pmb{\varphi}_h , u_h )$ be the least squares approximation of $( \pmb{\varphi} , u )$. Furthermore, let  $e^u = u-u_h$ and $\pmb{e}^{\pmb{\varphi}} = \pmb{\varphi}-\pmb{\varphi}_h$.
  Let $(\pmb{\psi}, v) \in \HZeroDiv \times \HOne$ be the solution of the dual problem given by Theorem~\ref{theorem:duality_argument_grad_u} with $w = e^u$.
  Additionally, let $( \pmb{\psi}_h , v_h )$ be the least squares approximation of $( \pmb{\psi} , v )$ and
  denote $e^v = v-v_h$ and $\pmb{e}^{\pmb{\psi}} = \pmb{\psi}-\pmb{\psi}_h$.
  Then,
  \begin{equation*}
    \| ( \pmb{e}^{\pmb{\psi}} , e^v ) \|_b    \lesssim \norm{\nabla e^u}_{L^2(\Omega)} \quad \text{ and } \quad
    \norm{e^v}_{L^2(\Omega)}                          \lesssim \frac{h}{p} \norm{\nabla e^u}_{L^2(\Omega)} \quad \text{ and } \quad
    \| \pmb{e}^{\pmb{\psi}} \|_{L^2(\Omega)}  \lesssim \frac{h}{p} \norm{\nabla e^u}_{L^2(\Omega)}.
  \end{equation*}
\end{lemma}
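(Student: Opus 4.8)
The plan is to obtain all three bounds by transporting the results already established for the primal problem to the dual pair $(\pmb{\psi}, v)$ and its least squares approximation $(\pmb{\psi}_h, v_h)$. The key point, which I would state first, is that since the bilinear form $b$ is symmetric and $(\pmb{\psi}_h, v_h)$ is by definition the $b$-orthogonal (least squares) projection of $(\pmb{\psi}, v)$, the Galerkin orthogonality $b((\pmb{e}^{\pmb{\psi}}, e^v), (\tilde{\pmb{\psi}}_h, \tilde{v}_h)) = 0$ holds for all $\tilde{\pmb{\psi}}_h \in \RTBDMZero$ and $\tilde{v}_h \in \Sp$. The proofs of Lemma~\ref{lemma:e_u_suboptimal_l2_error_estimate} and Theorem~\ref{theorem:e_phi_optimal_l2_error_estimate} use only this orthogonality together with the duality arguments of Section~\ref{section:duality_argument}; hence both apply verbatim with $(\pmb{\psi}, v)$, $(\pmb{\psi}_h, v_h)$ in place of $(\pmb{\varphi}, u)$, $(\pmb{\varphi}_h, u_h)$.

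For the first estimate I would exploit that $(\pmb{\psi}_h, v_h)$ is the best approximation in the $\|\cdot\|_b$ norm, see \eqref{eq:cea-lemma}. Comparing against the admissible competitor $(\pmb{0}, 0) \in \RTBDMZero \times \Sp$ gives $\|(\pmb{e}^{\pmb{\psi}}, e^v)\|_b \leq \|(\pmb{\psi}, v)\|_b$, and the upper bound of the norm equivalence (Theorem~\ref{theorem:norm_equivalence}) bounds the right-hand side by $\|v\|_{H^1(\Omega)} + \|\pmb{\psi}\|_{\HDiv}$. Since $(\pmb{\psi}, v)$ is the dual solution of Theorem~\ref{theorem:duality_argument_grad_u} with $w = e^u$, its regularity estimates bound $\|v\|_{H^1(\Omega)}$, $\|\pmb{\psi}\|_{H^2(\Omega)}$ and $\|\nabla\cdot\pmb{\psi}\|_{H^1(\Omega)}$ --- and thus $\|\pmb{\psi}\|_{\HDiv}$ --- by $\|\nabla e^u\|_{L^2(\Omega)}$, which yields the first claim.

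The second estimate follows by applying Lemma~\ref{lemma:e_u_suboptimal_l2_error_estimate} to the dual pair, giving $\|e^v\|_{L^2(\Omega)} \lesssim (h/p)\|(\pmb{e}^{\pmb{\psi}}, e^v)\|_b$, and then inserting the first estimate. For the third estimate I would apply Theorem~\ref{theorem:e_phi_optimal_l2_error_estimate} to the dual pair: for any $\tilde{\pmb{\psi}}_h \in \RTBDMZero$ it bounds $\|\pmb{e}^{\pmb{\psi}}\|_{L^2(\Omega)}$ by $(h/p)\|(\pmb{e}^{\pmb{\psi}}, e^v)\|_b + \|\pmb{\psi} - \tilde{\pmb{\psi}}_h\|_{L^2(\Omega)} + (h/p)\|\nabla\cdot(\pmb{\psi} - \tilde{\pmb{\psi}}_h)\|_{L^2(\Omega)}$. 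I would take $\tilde{\pmb{\psi}}_h$ to be the commuting $\HDiv$-conforming interpolant of $\pmb{\psi}$ from \cite{melenk-rojik18}; using $\pmb{\psi} \in \pmb{H}^2(\Omega)$ and $\nabla\cdot\pmb{\psi} \in H^1(\Omega)$ one has $\|\pmb{\psi} - \tilde{\pmb{\psi}}_h\|_{L^2(\Omega)} \lesssim (h/p)\|\pmb{\psi}\|_{H^1(\Omega)}$ and $\|\nabla\cdot(\pmb{\psi} - \tilde{\pmb{\psi}}_h)\|_{L^2(\Omega)} \lesssim \|\nabla\cdot\pmb{\psi}\|_{H^1(\Omega)}$, so that by the regularity estimates of Theorem~\ref{theorem:duality_argument_grad_u} and the first estimate each of the three terms is $\lesssim (h/p)\|\nabla e^u\|_{L^2(\Omega)}$.

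I do not expect a genuine obstacle here; the lemma is essentially a corollary of the earlier machinery. The only step demanding care is the transfer in the first paragraph, namely confirming that the phrase \emph{least squares approximation of} $(\pmb{\psi}, v)$ supplies precisely the Galerkin orthogonality that the proofs of Lemma~\ref{lemma:e_u_suboptimal_l2_error_estimate} and Theorem~\ref{theorem:e_phi_optimal_l2_error_estimate} consume; this is immediate from the symmetry of $b$. Everything else reduces to feeding in the dual regularity from Theorem~\ref{theorem:duality_argument_grad_u} and invoking standard $hp$ approximation results.
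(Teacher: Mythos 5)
Your proposal is correct and follows essentially the same route as the paper: the first bound via the $b$-norm best-approximation property (eq.~\eqref{eq:cea-lemma}) combined with the norm equivalence and the regularity estimates of Theorem~\ref{theorem:duality_argument_grad_u}, the second by applying Lemma~\ref{lemma:e_u_suboptimal_l2_error_estimate} to the dual pair, and the third by applying Theorem~\ref{theorem:e_phi_optimal_l2_error_estimate} to the dual pair and feeding in the dual regularity. The only differences are cosmetic --- you make explicit the zero competitor and the justification (symmetry of $b$ giving Galerkin orthogonality for the dual approximation) that the paper leaves implicit.
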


\begin{proof}
  Theorem~\ref{theorem:duality_argument_grad_u} provides
  $\|\pmb{\psi}\|_{\pmb{H}^2(\Omega)} + \|\nabla \cdot \pmb{\psi}\|_{H^1(\Omega)} + \|v \|_{H^1(\Omega)} \lesssim \|\nabla w\|_{L^2(\Omega)}$.
  Stability of the least squares method (cf.\ (\ref{eq:cea-lemma})) yields
  \begin{equation*}
    \| ( \pmb{e}^{\pmb{\psi}} , e^v ) \|_b
    \lesssim \norm{\nabla e^u}_{L^2(\Omega)}.
  \end{equation*}
  By Lemma~\ref{lemma:e_u_suboptimal_l2_error_estimate} we have
  \begin{equation*}
    \norm{e^v}_{L^2(\Omega)} \lesssim h/p \| ( \pmb{e}^{\pmb{\psi}} , e^v ) \|_b,
  \end{equation*}
  which together with the above gives the second estimate.
  By Theorem~\ref{theorem:e_phi_optimal_l2_error_estimate} we have
  \begin{equation*}
    \| \pmb{e}^{\pmb{\psi}} \|_{L^2(\Omega)}
    \lesssim
      \frac{h}{p} \norm{v - \tilde{v}_h}_{H^1(\Omega)} +
      \| \pmb{\psi} - \tilde{\pmb{\psi}}_h \|_{L^2(\Omega)} +
      \frac{h}{p} \| \nabla \cdot (\pmb{\psi} - \tilde{\pmb{\psi}}_h) \|_{L^2(\Omega)}
  \end{equation*}
  for any $\tilde{v}_h \in \Sp$, $\tilde{\pmb{\psi}}_h \in \RTBDMZero$.
  The result follows immediately by again exploiting the regularity of the dual solution and the approximation properties of the employed spaces.
\end{proof}

\begin{theorem}\label{theorem:grad_e_u_optimal_l2_error_estimate}
  Let $\Gamma$ be smooth and $( \pmb{\varphi}_h , u_h )$ be the least squares approximation of $( \pmb{\varphi} , u )$.
  Furthermore, let  $e^u = u-u_h$. Then, for any $\tilde{\pmb{\varphi}}_h \in \RTBDMZero$, $\tilde{u}_h \in \Sp$,
  \begin{equation*}
    \norm{\nabla e^u}_{L^2(\Omega)}
    \lesssim \norm{ u - \tilde{u}_h }_{H^1(\Omega)} +
      \frac{h}{p} \| \pmb{\varphi} - \tilde{\pmb{\varphi}}_h \|_{L^2(\Omega)} +
      \frac{h}{p} \| \nabla \cdot (\pmb{\varphi} - \tilde{\pmb{\varphi}}_h ) \|_{L^2(\Omega)}.
  \end{equation*}
\end{theorem}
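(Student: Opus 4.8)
The plan is to run the duality argument of Theorem~\ref{theorem:duality_argument_grad_u} \emph{twice} --- exploiting Galerkin orthogonality for both the primal and the dual problem --- and to neutralise the single troublesome term by choosing the divergence-orthogonal approximant $\IhZero\pmb{\varphi}$.

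First I would apply Theorem~\ref{theorem:duality_argument_grad_u} with $w = e^u$ to obtain a dual solution $(\pmb{\psi}, v) \in \HZeroDiv \times \HOne$ with $\norm{\nabla e^u}_{L^2(\Omega)}^2 = b( ( \pmb{e}^{\pmb{\varphi}} , e^u ) , ( \pmb{\psi} , v ) )$ together with the regularity bounds $\|\pmb{\psi}\|_{\pmb{H}^2(\Omega)} + \|\nabla \cdot \pmb{\psi}\|_{H^1(\Omega)} + \|v\|_{H^1(\Omega)} \lesssim \norm{\nabla e^u}_{L^2(\Omega)}$. Let $(\pmb{\psi}_h, v_h)$ be its least squares approximation, with errors $\pmb{e}^{\pmb{\psi}}, e^v$, for which Lemma~\ref{lemma:convergence_of_dual_solution_grad_u} supplies the \emph{improved} rates $\|e^v\|_{L^2(\Omega)} \lesssim \frac{h}{p}\norm{\nabla e^u}_{L^2(\Omega)}$ and $\|\pmb{e}^{\pmb{\psi}}\|_{L^2(\Omega)} \lesssim \frac{h}{p}\norm{\nabla e^u}_{L^2(\Omega)}$, whereas $\|(\pmb{e}^{\pmb{\psi}}, e^v)\|_b \lesssim \norm{\nabla e^u}_{L^2(\Omega)}$ carries no gain. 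Using Galerkin orthogonality for the primal problem to subtract $(\pmb{\psi}_h, v_h)$, and then Galerkin orthogonality for the (symmetric) dual problem to subtract any discrete $(\tilde{\pmb{\varphi}}_h, \tilde{u}_h)$, I arrive at
\[
  \norm{\nabla e^u}_{L^2(\Omega)}^2 = b( ( \pmb{e}^{\pmb{\varphi}} , e^u ) , ( \pmb{e}^{\pmb{\psi}} , e^v ) ) = b( ( \pmb{\varphi} - \tilde{\pmb{\varphi}}_h , u - \tilde{u}_h ) , ( \pmb{e}^{\pmb{\psi}} , e^v ) ).
\]

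Next I would expand $b$ into its two $L^2(\Omega)$ products and estimate termwise, taking the divergence-orthogonal choice $\tilde{\pmb{\varphi}}_h = \IhZero\pmb{\varphi}$ from Lemma~\ref{lemma:Ih_well_defined}. Most products are routine: every piece containing $e^v$ or $\pmb{e}^{\pmb{\psi}}$ directly absorbs a factor $h/p$ by Cauchy-Schwarz and the improved dual rates; the gradient piece $( \nabla (u - \tilde{u}_h) , \nabla e^v + \pmb{e}^{\pmb{\psi}} )_{\ltwo}$ produces the leading $\norm{u - \tilde{u}_h}_{H^1(\Omega)}\norm{\nabla e^u}_{L^2(\Omega)}$ via $\|\nabla e^v + \pmb{e}^{\pmb{\psi}}\|_{L^2(\Omega)} \le \|(\pmb{e}^{\pmb{\psi}}, e^v)\|_b$; and the cross term $( \pmb{\varphi} - \IhZero\pmb{\varphi} , \nabla e^v )_{\ltwo}$ is integrated by parts --- legitimate since both $\pmb{\varphi}$ and $\IhZero\pmb{\varphi}$ lie in $\HZeroDiv$ in the Neumann case --- to trade $\nabla e^v$ for $\nabla \cdot (\pmb{\varphi} - \IhZero\pmb{\varphi})$ tested against $\|e^v\|_{L^2(\Omega)}$.

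The one genuinely delicate term is $( \nabla \cdot (\pmb{\varphi} - \IhZero\pmb{\varphi}) , \nabla \cdot \pmb{e}^{\pmb{\psi}} )_{\ltwo}$, since $\|\nabla \cdot \pmb{e}^{\pmb{\psi}}\|_{L^2(\Omega)}$ carries no spare power of $h$; this is where the orthogonality built into $\IhZero$ is essential. Because $\nabla \cdot \tilde{\pmb{\psi}}_h \in \nabla \cdot \RTBDMZero$ for every $\tilde{\pmb{\psi}}_h \in \RTBDMZero$, the defining relation of $\IhZero$ lets me replace $\nabla \cdot \pmb{e}^{\pmb{\psi}}$ by $\nabla \cdot (\pmb{\psi} - \tilde{\pmb{\psi}}_h)$ and then exploit $\nabla \cdot \pmb{\psi} \in H^1(\Omega)$ to gain $\|\nabla \cdot (\pmb{\psi} - \tilde{\pmb{\psi}}_h)\|_{L^2(\Omega)} \lesssim \frac{h}{p}\|\nabla \cdot \pmb{\psi}\|_{H^1(\Omega)} \lesssim \frac{h}{p}\norm{\nabla e^u}_{L^2(\Omega)}$. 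Finally I would convert each occurrence of $\pmb{\varphi} - \IhZero\pmb{\varphi}$ back to an arbitrary $\tilde{\pmb{\varphi}}_h$ through the two estimates of Lemma~\ref{lemma:properties_of_Ih}, collect all contributions into the factor $[\,\norm{u - \tilde{u}_h}_{H^1(\Omega)} + \frac{h}{p}\|\pmb{\varphi} - \tilde{\pmb{\varphi}}_h\|_{L^2(\Omega)} + \frac{h}{p}\|\nabla \cdot (\pmb{\varphi} - \tilde{\pmb{\varphi}}_h)\|_{L^2(\Omega)}\,]\,\norm{\nabla e^u}_{L^2(\Omega)}$, and cancel one power of $\norm{\nabla e^u}_{L^2(\Omega)}$. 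The main obstacle is precisely this divergence--divergence term; the remaining products are bookkeeping once the improved dual rates and the $\IhZero$ approximation bounds are in place.
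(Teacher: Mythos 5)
Your proposal is correct and follows essentially the same route as the paper's proof: the gradient duality argument with double Galerkin orthogonality, the choice $\tilde{\pmb{\varphi}}_h = \IhZero\pmb{\varphi}$, the improved dual rates from Lemma~\ref{lemma:convergence_of_dual_solution_grad_u}, integration by parts on the $(\pmb{\varphi}-\IhZero\pmb{\varphi},\nabla e^v)$ term, and the orthogonality of $\IhZero$ to tame the divergence--divergence term before reverting to arbitrary $\tilde{\pmb{\varphi}}_h$ via Lemma~\ref{lemma:properties_of_Ih}. You even correctly write $\norm{\nabla e^u}_{L^2(\Omega)}^2$ on the left of the duality identity, where the paper's displayed formula has a typo ($\norm{e^u}_{L^2(\Omega)}^2$).
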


\begin{proof}
  As in Remark~\ref{remark:heuristic_arguments} with $( \pmb{e}^{\pmb{\psi}} , e^v )$ denoting the error of the FOSLS approximation of the dual solution given by Theorem~\ref{theorem:duality_argument_grad_u} (duality argument for the gradient of the scalar variable) applied to $w = e^u$
  we have for any $\tilde{\pmb{\varphi}}_h \in \RTBDMZero$, $\tilde{u}_h \in \Sp$
  \begin{align*}
    \norm{e^u}_{L^2(\Omega)}^2
    &= b( ( \pmb{\varphi} - \tilde{\pmb{\varphi}}_h , u - \tilde{u}_h ), ( \pmb{e}^{\pmb{\psi}} , e^v ) ) \\
    &=
    ( \nabla \cdot (\pmb{\varphi} - \tilde{\pmb{\varphi}}_h) + \gamma (u - \tilde{u}_h) ,
      \nabla \cdot \pmb{e}^{\pmb{\psi}} + \gamma e^v )_{\ltwo}
    + ( \nabla (u - \tilde{u}_h) + \pmb{\varphi} - \tilde{\pmb{\varphi}}_h , \nabla e^v + \pmb{e}^{\pmb{\psi}} )_{\ltwo}.
  \end{align*}
  We specifically choose $\tilde{\pmb{\varphi}}_h = \IhZero \pmb{\varphi}$.
  In the following we heavily use the properties of the operator $\IhZero$ given in Lemma~\ref{lemma:properties_of_Ih}.
  First we exploit the regularity of the dual solution using Lemma~\ref{lemma:convergence_of_dual_solution_grad_u} as well as the estimates of Theorem~\ref{theorem:duality_argument_grad_u}:
  \begin{equation*}
  \begin{alignedat}{2}
    ( \gamma (u - \tilde{u}_h) , \nabla \cdot \pmb{e}^{\pmb{\psi}} + \gamma e^v )_{\ltwo}
      &\lesssim \norm{ u - \tilde{u}_h }_{L^2(\Omega)} \| ( \pmb{e}^{\pmb{\psi}}, e^v ) \|_b \\
      &\lesssim \norm{ u - \tilde{u}_h }_{H^1(\Omega)} \norm{\nabla e^u}_{L^2(\Omega)} , \\
    ( \nabla (u - \tilde{u}_h) , \nabla e^v + \pmb{e}^{\pmb{\psi}} )_{\ltwo}
        &\lesssim \norm{\nabla (u - \tilde{u}_h) }_{L^2(\Omega)} \| ( \pmb{e}^{\pmb{\psi}}, e^v ) \|_b \\
        &\lesssim \norm{ u - \tilde{u}_h }_{H^1(\Omega)} \norm{\nabla e^u}_{L^2(\Omega)}, \\
    ( \pmb{\varphi} - \IhZero \pmb{\varphi} , \nabla e^v )_{\ltwo}
        &= - ( \nabla \cdot ( \pmb{\varphi} - \IhZero \pmb{\varphi} ) , e^v )_{\ltwo} \\
        &\leq \norm{\nabla \cdot ( \pmb{\varphi} - \IhZero \pmb{\varphi})}_{L^2(\Omega)} \norm{e^v}_{L^2(\Omega)} \\
        &\lesssim h/p \norm{\nabla \cdot ( \pmb{\varphi} - \IhZero \pmb{\varphi})}_{L^2(\Omega)} \norm{\nabla e^u}_{L^2(\Omega)}, \\
    ( \nabla \cdot (\pmb{\varphi} - \IhZero \pmb{\varphi}) , \gamma e^v )_{\ltwo}
        &\leq \norm{\nabla \cdot ( \pmb{\varphi} - \IhZero \pmb{\varphi})}_{L^2(\Omega)} \norm{e^v}_{L^2(\Omega)} \\
        &\lesssim h/p \norm{\nabla \cdot ( \pmb{\varphi} - \IhZero \pmb{\varphi})}_{L^2(\Omega)} \norm{\nabla e^u}_{L^2(\Omega)}, \\
    ( \pmb{\varphi} - \IhZero \pmb{\varphi} , \pmb{e}^{\pmb{\psi}} )_{\ltwo}
        &\lesssim \norm{  \pmb{\varphi} - \IhZero \pmb{\varphi} }_{L^2(\Omega)} \| \pmb{e}^{\pmb{\psi}} \|_{L^2(\Omega)} \\
        &\lesssim h/p \norm{  \pmb{\varphi} - \IhZero \pmb{\varphi} }_{L^2(\Omega)} \norm{\nabla e^u}_{L^2(\Omega)}, \\
    ( \nabla \cdot (\pmb{\varphi} - \IhZero \pmb{\varphi}) , \nabla \cdot \pmb{e}^{\pmb{\psi}} )_{\ltwo}
        &= ( \nabla \cdot (\pmb{\varphi} - \IhZero \pmb{\varphi}) , \nabla \cdot (\pmb{\psi} - \tilde{\pmb{\psi}}_h) )_{\ltwo} \\
        &\leq \norm{ \nabla \cdot (\pmb{\varphi} - \IhZero \pmb{\varphi}) }_{L^2(\Omega)} \| \nabla \cdot (\pmb{\psi} - \tilde{\pmb{\psi}}_h) \|_{L^2(\Omega)} \\
        &\lesssim h/p \norm{  \nabla \cdot (\pmb{\varphi} - \IhZero \pmb{\varphi}) }_{L^2(\Omega)} \norm{\nabla e^u}_{L^2(\Omega)}.
  \end{alignedat}
\end{equation*}
  Canceling one power of $\norm{\nabla e^u}_{L^2(\Omega)}$, collecting the terms, and using the estimate for $\IhZero$ we arrive at the asserted estimate.
\end{proof}

As a tool in the proof of our main theorem (Theorem~\ref{theorem:e_u_optimal_l2_error_estimate}) we need to analyze the error of the FOSLS approximation of the dual solution.
This is summarized in

\begin{lemma}\label{lemma:convergence_of_dual_solution}
  Let $\Gamma$ be smooth and $( \pmb{\varphi}_h , u_h )$ be the least squares approximation of $( \pmb{\varphi} , u )$. Furthermore, let  $e^u = u-u_h$ and $\pmb{e}^{\pmb{\varphi}} = \pmb{\varphi}-\pmb{\varphi}_h$.
  Let $(\pmb{\psi}, v) \in \HZeroDiv \times \HOne$ be the solution of the dual problem given by Theorem~\ref{theorem:duality_argument} with $w = e^u$.
  Additionally, let $( \pmb{\psi}_h , v_h )$ be the least squares approximation of $( \pmb{\psi} , v )$ and
  denote $e^v = v-v_h$ and $\pmb{e}^{\pmb{\psi}} = \pmb{\psi}-\pmb{\psi}_h$.
  Then,
  \begin{equation*}
    \| ( \pmb{e}^{\pmb{\psi}} , e^v ) \|_b \lesssim \frac{h}{p}     \norm{e^u}_{L^2(\Omega)} \quad \text{ and } \quad
    \norm{e^v}_{L^2(\Omega)}                       \lesssim \left(\frac{h}{p}\right)^{2} \norm{e^u}_{L^2(\Omega)}.
  \end{equation*}
  Furthermore,
  \[
  \| \pmb{e}^{\pmb{\psi}} \|_{L^2(\Omega)} \lesssim
  \begin{cases}
      h \norm{e^u}_{L^2(\Omega)}   & \mbox{ if } \RTBDMZero = \pmb{\mathrm{RT}}_{0}^{0}(\mathcal{T}_h), \\
      \left(\frac{h}{p}\right)^{2} \norm{e^u}_{L^2(\Omega)} & \textit{else}.
   \end{cases}
   \]
\end{lemma}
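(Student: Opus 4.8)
The plan is to follow the proof of Lemma~\ref{lemma:convergence_of_dual_solution_grad_u} almost verbatim, the one decisive change being that the dual pair $(\pmb{\psi}, v)$ now comes from Theorem~\ref{theorem:duality_argument} instead of Theorem~\ref{theorem:duality_argument_grad_u}, and therefore carries one extra order of regularity: for $w = e^u$ that theorem supplies $v \in H^2(\Omega)$, $\pmb{\psi} \in \pmb{H}^3(\Omega)$, $\nabla \cdot \pmb{\psi} \in H^2(\Omega)$ together with $\norm{v}_{H^2(\Omega)} + \norm{\pmb{\psi}}_{H^3(\Omega)} + \norm{\nabla \cdot \pmb{\psi}}_{H^2(\Omega)} \lesssim \norm{e^u}_{L^2(\Omega)}$. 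This extra order is precisely what will promote the three bounds by one further factor $h/p$ relative to Lemma~\ref{lemma:convergence_of_dual_solution_grad_u}. Throughout I will apply the earlier results -- the quasi-optimality \eqref{eq:cea-lemma}, Lemma~\ref{lemma:e_u_suboptimal_l2_error_estimate} and Theorem~\ref{theorem:e_phi_optimal_l2_error_estimate} -- to the least squares approximation $(\pmb{\psi}_h, v_h)$ of $(\pmb{\psi}, v)$; this is admissible because those statements use only the form $b$, its Galerkin orthogonality, and the regularity of the relevant dual solution, none of which depends on the particular right-hand side.

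For the first bound I would start from \eqref{eq:cea-lemma} and Theorem~\ref{theorem:norm_equivalence}, so that $\norm{(\pmb{e}^{\pmb{\psi}}, e^v)}_b \lesssim \norm{v - \tilde v_h}_{H^1(\Omega)} + \norm{\pmb{\psi} - \tilde\pmb{\psi}_h}_{\HDiv}$ for any $\tilde v_h \in \Sp$, $\tilde\pmb{\psi}_h \in \RTBDMZero$. Choosing these as the commuting interpolants of \cite{melenk-rojik18} and using $v \in H^2(\Omega)$ together with $\pmb{\psi} \in \pmb{H}^1(\Omega)$, $\nabla\cdot\pmb{\psi} \in H^1(\Omega)$, each summand gains a factor $h/p$, giving $\norm{(\pmb{e}^{\pmb{\psi}}, e^v)}_b \lesssim (h/p)\norm{e^u}_{L^2(\Omega)}$. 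For the second bound I invoke Lemma~\ref{lemma:e_u_suboptimal_l2_error_estimate} applied to the dual problem, namely $\norm{e^v}_{L^2(\Omega)} \lesssim (h/p)\norm{(\pmb{e}^{\pmb{\psi}}, e^v)}_b$, and combine it with the first bound to reach $(h/p)^2 \norm{e^u}_{L^2(\Omega)}$.

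The third bound is where the case split appears. I would apply Theorem~\ref{theorem:e_phi_optimal_l2_error_estimate} to the dual problem, obtaining for any $\tilde v_h \in \Sp$, $\tilde\pmb{\psi}_h \in \RTBDMZero$ that $\norm{\pmb{e}^{\pmb{\psi}}}_{L^2(\Omega)} \lesssim (h/p)\norm{v - \tilde v_h}_{H^1(\Omega)} + \norm{\pmb{\psi} - \tilde\pmb{\psi}_h}_{L^2(\Omega)} + (h/p)\norm{\nabla\cdot(\pmb{\psi} - \tilde\pmb{\psi}_h)}_{L^2(\Omega)}$. With $v \in H^2(\Omega)$ the first term is $\lesssim (h/p)^2\norm{e^u}_{L^2(\Omega)}$; with $\nabla\cdot\pmb{\psi} \in H^2(\Omega)$ the third term is also $\lesssim (h/p)^2\norm{e^u}_{L^2(\Omega)}$ for every admissible space (in the lowest order case $p=1$ the divergence space $\mathcal{P}_0$ only yields an $h$, but the prefactor $h/p = h$ then makes the product $h^2$). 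The single term that is not uniformly of order $(h/p)^2$ is the plain $L^2$ approximation $\norm{\pmb{\psi} - \tilde\pmb{\psi}_h}_{L^2(\Omega)}$ of the vector field: the space $\pmb{\mathrm{RT}}_0^0(\mathcal{T}_h)$ consists of $\mathcal{P}_0^d + \pmb{x}\mathcal{P}_0$ and can approximate the smooth $\pmb{\psi}$ only to order $h$, whereas every other admissible choice (including $\pmb{\mathrm{BDM}}_1$ and all higher degrees) contains $\mathcal{P}_1^d$ and hence reaches order $(h/p)^2$. This is exactly the dichotomy asserted in the statement.

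The hard part will be the bookkeeping for the third bound: one must confirm that, apart from the isolated vector $L^2$ term for $\pmb{\mathrm{RT}}_0^0(\mathcal{T}_h)$, every contribution coming out of Theorem~\ref{theorem:e_phi_optimal_l2_error_estimate} is genuinely of order $(h/p)^2$ once evaluated against the regularity $(v, \pmb{\psi}, \nabla\cdot\pmb{\psi}) \in H^2(\Omega) \times \pmb{H}^3(\Omega) \times H^2(\Omega)$ from Theorem~\ref{theorem:duality_argument}. Concretely this means tracking, for each admissible pair of spaces and each $p = \min(p_s, p_v)$, the separate $L^2$-approximation orders of the vector field, of its divergence, and of the scalar field, and checking that the $h/p$ prefactors never let any of them drop below $(h/p)^2$ except in the lone $\pmb{\mathrm{RT}}_0^0(\mathcal{T}_h)$ situation; the approximation orders themselves are standard and follow from \cite{melenk-rojik18, bernkopf-melenk19}.
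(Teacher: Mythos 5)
Your proposal is correct and follows the paper's own proof essentially verbatim: the first bound via the quasi-optimality \eqref{eq:cea-lemma} and the regularity from Theorem~\ref{theorem:duality_argument}, the second via Lemma~\ref{lemma:e_u_suboptimal_l2_error_estimate} applied to the dual pair, and the third via Theorem~\ref{theorem:e_phi_optimal_l2_error_estimate} combined with the approximation properties of the spaces. Your explicit bookkeeping of where the $\pmb{\mathrm{RT}}_0^0(\mathcal{T}_h)$ case split originates (the plain $L^2$ approximation of $\pmb{\psi}$) is in fact more detailed than the paper, which leaves that step implicit.
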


\begin{proof}
  Theorem~\ref{theorem:duality_argument} gives $\pmb{\psi} \in \pmb{H}^3(\Omega)$, $\nabla \cdot \pmb{\psi} \in H^2(\Omega)$ and $v \in H^2(\Omega)$
  with norms bounded by $\|e^v\|_{L^2(\Omega)}$.
  Therefore we have in view of optimality of the FOSLS method in the $b$-norm
  \begin{equation*}
    \| ( \pmb{e}^{\pmb{\psi}} , e^v ) \|_b
    \stackrel{(\ref{eq:cea-lemma})}{\leq} \| ( \pmb{\psi} - \tilde{\pmb{\psi}}_h , v - \tilde{v}_h ) \|_b
    \lesssim h/p \norm{e^v}_{L^2(\Omega)},
  \end{equation*}
  where the first estimate holds for any $\tilde{v}_h \in S_p$, $\tilde{\pmb{\psi}}_h \in \RTBDMZero$ and
  the second one follows with the same arguments as in the proof of Lemma~\ref{lemma:e_u_suboptimal_l2_error_estimate}.
  By Lemma~\ref{lemma:e_u_suboptimal_l2_error_estimate} we have
  \begin{equation*}
    \norm{e^v}_{L^2(\Omega)} \lesssim h/p \| ( \pmb{e}^{\pmb{\psi}} , e^v ) \|_b,
  \end{equation*}
  which together with the above gives the second estimate.
  By Theorem~\ref{theorem:e_phi_optimal_l2_error_estimate} we have
  \begin{equation*}
    \| \pmb{e}^{\pmb{\psi}} \|_{L^2(\Omega)}
    \lesssim
      \frac{h}{p} \norm{v - \tilde{v}_h}_{H^1(\Omega)} +
      \| \pmb{\psi} - \tilde{\pmb{\psi}}_h \|_{L^2(\Omega)} +
      \frac{h}{p} \| \nabla \cdot (\pmb{\psi} - \tilde{\pmb{\psi}}_h) \|_{L^2(\Omega)}
  \end{equation*}
  for any $\tilde{v}_h \in \Sp$, $\tilde{\pmb{\psi}}_h \in \RTBDMZero$.
  The result follows immediately by again exploiting the regularity of the dual solution and the approximation properties of the employed spaces.
\end{proof}

\begin{theorem}\label{theorem:e_u_optimal_l2_error_estimate}
  Let $\Gamma$ be smooth and $( \pmb{\varphi}_h , u_h )$ be the least squares approximation of $( \pmb{\varphi} , u )$.
  Furthermore, let  $e^u = u-u_h$. Then,
  for any $\tilde{\pmb{\varphi}}_h \in \RTBDMZero$, $\tilde{u}_h \in \Sp$,
  \[
  \norm{e^u}_{L^2(\Omega)}
   \lesssim
    \begin{cases}
      h \norm{ u - \tilde{u}_h }_{H^1(\Omega)} +
        h \| \pmb{\varphi} - \tilde{\pmb{\varphi}}_h \|_{L^2(\Omega)} +
        h \| \nabla \cdot (\pmb{\varphi} - \tilde{\pmb{\varphi}}_h ) \|_{L^2(\Omega)} & \mbox{for } \pmb{\mathrm{RT}}_{0}^{0}(\mathcal{T}_h), \\
      h \norm{ u - \tilde{u}_h }_{H^1(\Omega)} +
        h^2 \| \pmb{\varphi} - \tilde{\pmb{\varphi}}_h \|_{L^2(\Omega)} +
        h \| \nabla \cdot (\pmb{\varphi} - \tilde{\pmb{\varphi}}_h ) \|_{L^2(\Omega)} & \mbox{for } \pmb{\mathrm{BDM}}_{1}^{0}(\mathcal{T}_h), \\
      \frac{h}{p} \norm{ u - \tilde{u}_h }_{H^1(\Omega)} +
        \left( \frac{h}{p} \right)^2 \| \pmb{\varphi} - \tilde{\pmb{\varphi}}_h \|_{L^2(\Omega)} +
        \left( \frac{h}{p} \right)^2 \| \nabla \cdot (\pmb{\varphi} - \tilde{\pmb{\varphi}}_h ) \|_{L^2(\Omega)} & \textit{else}.
     \end{cases}
  \]
\end{theorem}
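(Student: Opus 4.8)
The plan is to carry out the improved duality argument previewed in Remark~\ref{remark:heuristic_arguments}. First I would apply Theorem~\ref{theorem:duality_argument} with $w = e^u$ to obtain a dual solution $(\pmb{\psi}, v)$ with $\norm{e^u}_{L^2(\Omega)}^2 = b((\pmb{e}^{\pmb{\varphi}}, e^u), (\pmb{\psi}, v))$, and then replace $(\pmb{\psi}, v)$ by the error $(\pmb{e}^{\pmb{\psi}}, e^v)$ of its FOSLS approximation. Using Galerkin orthogonality on both the primal and the dual side yields, for any $\tilde{\pmb{\varphi}}_h \in \RTBDMZero$ and $\tilde{u}_h \in \Sp$,
\begin{equation*}
  \norm{e^u}_{L^2(\Omega)}^2 = b((\pmb{\varphi} - \tilde{\pmb{\varphi}}_h, u - \tilde{u}_h), (\pmb{e}^{\pmb{\psi}}, e^v)).
\end{equation*}
The decisive step is to take $\tilde{\pmb{\varphi}}_h = \IhZero \pmb{\varphi}$, so that the divergence orthogonality $(\nabla \cdot (\pmb{\varphi} - \IhZero \pmb{\varphi}), \nabla \cdot \pmb{\chi}_h)_{\ltwo} = 0$ for all $\pmb{\chi}_h \in \RTBDMZero$ is at our disposal, together with the approximation bounds of Lemma~\ref{lemma:properties_of_Ih}.

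Expanding $b$ into its six constituent products, I would group the two terms containing $u - \tilde{u}_h$ and bound them by $\norm{u - \tilde{u}_h}_{H^1(\Omega)} \norm{(\pmb{e}^{\pmb{\psi}}, e^v)}_b \lesssim \tfrac{h}{p} \norm{u - \tilde{u}_h}_{H^1(\Omega)} \norm{e^u}_{L^2(\Omega)}$, invoking the $b$-norm estimate of Lemma~\ref{lemma:convergence_of_dual_solution}; this produces the $u$-contribution, which degenerates to coefficient $h$ in the two lowest-order cases, where $p = 1$. The two products $(\pmb{\varphi} - \IhZero \pmb{\varphi}, \nabla e^v)_{\ltwo} = -(\nabla \cdot (\pmb{\varphi} - \IhZero \pmb{\varphi}), e^v)_{\ltwo}$ (integrating by parts, both factors lying in $\HZeroDiv$ since $\Gamma = \Gamma_N$) and $(\nabla \cdot (\pmb{\varphi} - \IhZero \pmb{\varphi}), \gamma e^v)_{\ltwo}$ are each controlled by $\norm{\nabla \cdot (\pmb{\varphi} - \IhZero \pmb{\varphi})}_{L^2(\Omega)} \norm{e^v}_{L^2(\Omega)}$, and the bound $\norm{e^v}_{L^2(\Omega)} \lesssim (h/p)^2 \norm{e^u}_{L^2(\Omega)}$ of Lemma~\ref{lemma:convergence_of_dual_solution} together with \eqref{lemma:properties_of_Ih-b} delivers a $\nabla \cdot$-contribution at order $(h/p)^2$. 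The term $(\pmb{\varphi} - \IhZero \pmb{\varphi}, \pmb{e}^{\pmb{\psi}})_{\ltwo} \le \norm{\pmb{\varphi} - \IhZero \pmb{\varphi}}_{L^2(\Omega)} \norm{\pmb{e}^{\pmb{\psi}}}_{L^2(\Omega)}$ is where the split between $\pmb{\mathrm{RT}}_{0}^{0}(\mathcal{T}_h)$ and the rest originates, fed by the two regimes of $\norm{\pmb{e}^{\pmb{\psi}}}_{L^2(\Omega)}$ in Lemma~\ref{lemma:convergence_of_dual_solution} and by \eqref{lemma:properties_of_Ih-a}.

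The heart of the argument is the remaining product $(\nabla \cdot (\pmb{\varphi} - \IhZero \pmb{\varphi}), \nabla \cdot \pmb{e}^{\pmb{\psi}})_{\ltwo}$, which in the naive Lemma~\ref{lemma:e_u_suboptimal_l2_error_estimate} contributes only order $h/p$. Here I would insert an arbitrary $\tilde{\pmb{\psi}}_h \in \RTBDMZero$ and use the orthogonality of $\IhZero$ to annihilate the discrete remainder $\nabla \cdot (\tilde{\pmb{\psi}}_h - \pmb{\psi}_h)$, reducing the term to $(\nabla \cdot (\pmb{\varphi} - \IhZero \pmb{\varphi}), \nabla \cdot (\pmb{\psi} - \tilde{\pmb{\psi}}_h))_{\ltwo}$. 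Exploiting $\nabla \cdot \pmb{\psi} \in H^2(\Omega)$ with $\norm{\nabla \cdot \pmb{\psi}}_{H^2(\Omega)} \lesssim \norm{e^u}_{L^2(\Omega)}$ from Theorem~\ref{theorem:duality_argument}, the best approximation of $\nabla \cdot \pmb{\psi}$ in $\nabla \cdot \RTBDMZero$ then supplies the extra power of $h$ missing from the suboptimal estimate: at rate $(h/p)^2$ when the divergence space contains at least the piecewise affine functions, but only at rate $h$ in the lowest-order cases $\pmb{\mathrm{RT}}_{0}^{0}(\mathcal{T}_h)$ and $\pmb{\mathrm{BDM}}_{1}^{0}(\mathcal{T}_h)$, where $\nabla \cdot \RTBDMZero$ consists of piecewise constants. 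Combining with \eqref{lemma:properties_of_Ih-b}, collecting all six contributions, and cancelling one power of $\norm{e^u}_{L^2(\Omega)}$ yields the three asserted estimates.

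I expect the main obstacle to be precisely this orthogonality term: everything hinges on the fact that $\IhZero$ was constructed so that $\nabla \cdot \pmb{e}^{\pmb{\psi}}$ — which as a genuine error decays only like $h/p$ — can be traded for the best-approximation quantity $\nabla \cdot (\pmb{\psi} - \tilde{\pmb{\psi}}_h)$, thereby transferring the full $H^2(\Omega)$ regularity of $\nabla \cdot \pmb{\psi}$ into a second power of $h$. The secondary difficulty is purely bookkeeping: tracking the three distinct rate regimes, which stem from the caps imposed by the piecewise-constant divergence spaces in the lowest-order cases and from the two regimes of $\norm{\pmb{e}^{\pmb{\psi}}}_{L^2(\Omega)}$ that separate $\pmb{\mathrm{RT}}_{0}^{0}(\mathcal{T}_h)$ from $\pmb{\mathrm{BDM}}_{1}^{0}(\mathcal{T}_h)$.
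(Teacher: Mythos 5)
Your proposal follows the paper's proof essentially verbatim: the same double Galerkin orthogonality reducing $\norm{e^u}_{L^2(\Omega)}^2$ to $b(( \pmb{\varphi} - \IhZero\pmb{\varphi}, u - \tilde{u}_h), (\pmb{e}^{\pmb{\psi}}, e^v))$, the same term-by-term estimates via Lemma~\ref{lemma:convergence_of_dual_solution} and Lemma~\ref{lemma:properties_of_Ih}, and the same use of the divergence orthogonality of $\IhZero$ to trade $\nabla\cdot\pmb{e}^{\pmb{\psi}}$ for the best approximation of $\nabla\cdot\pmb{\psi}\in H^2(\Omega)$, with the three rate regimes arising exactly as you describe. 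The argument is correct and matches the paper's.
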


\begin{proof}
  As in Remark~\ref{remark:heuristic_arguments} with $( \pmb{e}^{\pmb{\psi}} , e^v )$ denoting the FOSLS approximation of the dual solution given by Theorem~\ref{theorem:duality_argument} applied to $w = e^u$
  we have for any $\tilde{\pmb{\varphi}}_h \in \RTBDMZero$, $\tilde{u}_h \in \Sp$
  \begin{align*}
    \norm{e^u}_{L^2(\Omega)}^2
    &= b( ( \pmb{\varphi} - \tilde{\pmb{\varphi}}_h , u - \tilde{u}_h ), ( \pmb{e}^{\pmb{\psi}} , e^v ) ) \\
    &=
    ( \nabla \cdot (\pmb{\varphi} - \tilde{\pmb{\varphi}}_h) + \gamma (u - \tilde{u}_h) ,
      \nabla \cdot \pmb{e}^{\pmb{\psi}} + \gamma e^v )_{\ltwo}
    + ( \nabla (u - \tilde{u}_h) + \pmb{\varphi} - \tilde{\pmb{\varphi}}_h , \nabla e^v + \pmb{e}^{\pmb{\psi}} )_{\ltwo}.
  \end{align*}
  We specifically choose $\tilde{\pmb{\varphi}}_h = \IhZero \pmb{\varphi}$.
  In the following we heavily use the properties of the operator $\IhZero$ given in Lemma~\ref{lemma:properties_of_Ih}.
  First we exploit the regularity of the dual solution using Lemma~\ref{lemma:convergence_of_dual_solution} as well as the estimates of Theorem~\ref{theorem:duality_argument}:
  \begin{equation*}
  \begin{alignedat}{2}
    ( \gamma (u - \tilde{u}_h) , \nabla \cdot \pmb{e}^{\pmb{\psi}} + \gamma e^v )_{\ltwo}
      &\lesssim \norm{ u - \tilde{u}_h }_{L^2(\Omega)} \| ( \pmb{e}^{\pmb{\psi}}, e^v ) \|_b \\
      &\lesssim h/p \norm{ u - \tilde{u}_h }_{H^1(\Omega)} \norm{e^u}_{L^2(\Omega)} , \\
    ( \nabla (u - \tilde{u}_h) , \nabla e^v + \pmb{e}^{\pmb{\psi}} )_{\ltwo}
        &\lesssim \norm{\nabla (u - \tilde{u}_h) }_{L^2(\Omega)} \| ( \pmb{e}^{\pmb{\psi}}, e^v ) \|_b \\
        &\lesssim h/p \norm{ u - \tilde{u}_h }_{H^1(\Omega)} \norm{e^u}_{L^2(\Omega)}, \\
    ( \pmb{\varphi} - \IhZero \pmb{\varphi} , \nabla e^v )_{\ltwo}
        &= - ( \nabla \cdot ( \pmb{\varphi} - \IhZero \pmb{\varphi} ) , e^v )_{\ltwo} \\
        &\leq \norm{\nabla \cdot ( \pmb{\varphi} - \IhZero \pmb{\varphi})}_{L^2(\Omega)} \norm{e^v}_{L^2(\Omega)} \\
        &\lesssim (h/p)^2 \norm{\nabla \cdot ( \pmb{\varphi} - \IhZero \pmb{\varphi})}_{L^2(\Omega)} \norm{e^u}_{L^2(\Omega)}, \\
    ( \nabla \cdot (\pmb{\varphi} - \IhZero \pmb{\varphi}) , \gamma e^v )_{\ltwo}
        &\leq \norm{\nabla \cdot ( \pmb{\varphi} - \IhZero \pmb{\varphi})}_{L^2(\Omega)} \norm{e^v}_{L^2(\Omega)} \\
        &\lesssim (h/p)^2 \norm{\nabla \cdot ( \pmb{\varphi} - \IhZero \pmb{\varphi})}_{L^2(\Omega)} \norm{e^u}_{L^2(\Omega)}, \\
    ( \pmb{\varphi} - \IhZero \pmb{\varphi} , \pmb{e}^{\pmb{\psi}} )_{\ltwo}
        &\lesssim \norm{  \pmb{\varphi} - \IhZero \pmb{\varphi} }_{L^2(\Omega)} \| \pmb{e}^{\pmb{\psi}} \|_{L^2(\Omega)} \\
        &\lesssim
        \begin{cases}
            h \norm{  \pmb{\varphi} - \IhZero \pmb{\varphi} }_{L^2(\Omega)} \norm{e^u}_{L^2(\Omega)}   & \mbox{if } \RTBDMZero = \pmb{\mathrm{RT}}_{0}^{0}(\mathcal{T}_h), \\
            \left(\frac{h}{p}\right)^{2} \norm{  \pmb{\varphi} - \IhZero \pmb{\varphi} }_{L^2(\Omega)} \norm{e^u}_{L^2(\Omega)} & \textit{else},
         \end{cases}. \\
    ( \nabla \cdot (\pmb{\varphi} - \IhZero \pmb{\varphi}) , \nabla \cdot \pmb{e}^{\pmb{\psi}} )_{\ltwo}
        &= ( \nabla \cdot (\pmb{\varphi} - \IhZero \pmb{\varphi}) , \nabla \cdot (\pmb{\psi} - \tilde{\pmb{\psi}}_h) )_{\ltwo} \\
        &\leq \norm{ \nabla \cdot (\pmb{\varphi} - \IhZero \pmb{\varphi}) }_{L^2(\Omega)} \| \nabla \cdot (\pmb{\psi} - \tilde{\pmb{\psi}}_h) \|_{L^2(\Omega)} \\
        &\lesssim
        \begin{cases}
            h \norm{  \nabla \cdot (\pmb{\varphi} - \IhZero \pmb{\varphi}) }_{L^2(\Omega)} \norm{e^u}_{L^2(\Omega)}   & \mbox{if } p_v = 1, \\
            \left(\frac{h}{p}\right)^{2} \norm{  \nabla \cdot (\pmb{\varphi} - \IhZero \pmb{\varphi}) }_{L^2(\Omega)} \norm{e^u}_{L^2(\Omega)} & \textit{else}.
        \end{cases}
  \end{alignedat}
  \end{equation*}
  Canceling one power of $\norm{e^u}_{L^2(\Omega)}$, collecting the terms, and using the estimate for $\IhZero$ we arrive at the asserted estimate.
\end{proof}

\begin{remark}
  Before stating the general corollary with prescribed right-hand side $f \in H^s(\Omega)$ we highlight the improved convergence result.
  Consider $f \in L^2(\Omega)$.
  For the classical conforming finite element method one observes convergence $O(h^2)$ due to the Aubin-Nitsche trick.
  More precisely, let $u_h^{\mathrm{FEM}}$ be the solution to the model problem obtained by classical FEM, then there holds
  \begin{equation*}
    \| u - u_h^{\mathrm{FEM}} \|_{L^2(\Omega)} \lesssim h^2 \norm{u}_{H^2(\Omega)} \lesssim h^2 \norm{f}_{L^2(\Omega)}.
  \end{equation*}
  As elaborated in Section~\ref{section:introduction} this rate could not be obtained for the FOSLS method by previous results, since further regularity of the vector variable $\pmb{\varphi}$ would be necessary.
  Results like \cite[Lemma~{3.4}]{bochev-gunzburger05} and \cite[Thm.~{4.1}]{jespersen77} are essentially a duality argument like Theorem~\ref{theorem:duality_argument} and the strategy of Lemma~\ref{lemma:e_u_suboptimal_l2_error_estimate}.
  Without further analysis the estimate of Lemma~\ref{lemma:e_u_suboptimal_l2_error_estimate}, does not give any further powers of $h$, since the $b$-norm is equivalent to the $\HDiv \times \HOne$ norm.
  Theorem~\ref{theorem:e_u_optimal_l2_error_estimate} ensures, at least if the space $\RTBDMZero$ is not of lowest order, i.e. $p_v > 1$, that the FOSLS method converges
also as $O(h^2)$.
  More precisely, the estimate in Theorem~\ref{theorem:e_u_optimal_l2_error_estimate} together with the approximation properties of the employed finite element spaces and $p_v > 1$ and $p_s \geq 1$ gives
  \begin{equation*}
    \norm{e^u}_{L^2(\Omega)} \lesssim
      h^2 \norm{ u }_{H^2(\Omega)} +
      h^2 \| \pmb{\varphi} \|_{H^1(\Omega)} +
      h^2 \| \nabla \cdot \pmb{\varphi} \|_{L^2(\Omega)}
      \lesssim h^2 \norm{f}_{L^2(\Omega)}.
  \end{equation*}
  So in fact the optimal rate in the sense of the beginning of Section~\ref{section:error_analysis} is achieved.
  If the lowest order case $p_v = 1$ also achieves optimal order is yet to be answered.
  Numerical experiments in Section~\ref{section:numerical_examples}, however, indicate it to be true.
\eremk
\end{remark}

We summarize the results for general right-hand side $f \in H^s(\Omega)$.
This summary is essentially the estimates given by the Theorems~\ref{theorem:e_phi_optimal_l2_error_estimate}, \ref{theorem:grad_e_u_optimal_l2_error_estimate}, and \ref{theorem:e_u_optimal_l2_error_estimate} together with the approximation properties of the employed finite element spaces.

\begin{corollary}\label{corollary:summary_of_estimates_for_f_in_higher_order_sobolev_space}
  Let $\Gamma$ be smooth and $f \in H^s(\Omega)$ for some $s \geq 0$.
  Then the solution to $(\ref{eq:model_problem_first_order_system})$ satisfies $u \in H^{s+2}(\Omega)$, $\pmb{\varphi} \in \pmb{H}^{s+1}(\Omega)$ and $\nabla \cdot \pmb{\varphi} \in H^s(\Omega)$.
  Let $( \pmb{\varphi}_h , u_h )$ be the least squares approximation of $( \pmb{\varphi} , u )$.
  Furthermore, let  $e^u = u-u_h$ and $\pmb{e}^{\pmb{\varphi}} = \pmb{\varphi}-\pmb{\varphi}_h$.
  Then, for the lowest order case $p_v = 1$,
  \begin{equation*}
    \norm{e^u}_{L^2(\Omega)} \lesssim
    h^{\min(s+1, 2)} \norm{f}_{H^s(\Omega)}.
  \end{equation*}
  For $p_v > 1$ there holds
  \begin{equation*}
    \norm{e^u}_{L^2(\Omega)} \lesssim \left(\frac{h}{p}\right)^{\min(s+1, p_s, p_v+1) + 1} \norm{f}_{H^s(\Omega)}.
  \end{equation*}
  Furthermore, the estimate
  \begin{equation*}
    \norm{\nabla e^u}_{L^2(\Omega)} \lesssim \left(\frac{h}{p}\right)^{\min(s+1, p_s, p_v+1)} \norm{f}_{H^s(\Omega)}.
  \end{equation*}
  holds. Finally, we have
  \\
  \begin{center}
    \begin{tabular}{|c|c|}                   \hline
      $ \RTBDMZero = \RTZero $ & $ \RTBDMZero = \BDMZero $    \\ \hline
      $ \norm{\pmb{e}^{\pmb{\varphi}}}_{L^2(\Omega)} \lesssim \left(\frac{h}{p}\right)^{\min(s+1, p_s + 1, p_v) } \norm{f}_{H^s(\Omega)} $ &
      $ \norm{\pmb{e}^{\pmb{\varphi}}}_{L^2(\Omega)} \lesssim \left(\frac{h}{p}\right)^{\min(s+1, p_s + 1, p_v + 1) } \norm{f}_{H^s(\Omega)} $. \\ \hline
    \end{tabular}
  \end{center}
\end{corollary}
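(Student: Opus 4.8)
The plan is to combine the abstract error estimates of Theorems~\ref{theorem:e_phi_optimal_l2_error_estimate}, \ref{theorem:grad_e_u_optimal_l2_error_estimate}, and \ref{theorem:e_u_optimal_l2_error_estimate} with the standard $hp$-approximation properties of $\Sp$ and $\RTBDMZero$, choosing in each abstract bound the free discrete functions $\tilde u_h$, $\tilde{\pmb\varphi}_h$ to be suitable interpolants of $u$ and $\pmb\varphi$. First I dispose of the regularity claim: since $\Gamma$ is smooth and $u$ solves $-\Delta u + \gamma u = f$ with homogeneous Neumann data, the elliptic shift theorem yields $u \in H^{s+2}(\Omega)$ with $\norm{u}_{H^{s+2}(\Omega)} \lesssim \norm{f}_{H^s(\Omega)}$; consequently $\pmb\varphi = -\nabla u \in \pmb{H}^{s+1}(\Omega)$ and, from the equation, $\nabla\cdot\pmb\varphi = \gamma u - f \in H^s(\Omega)$, both bounded by $\norm{f}_{H^s(\Omega)}$.

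Next I collect the approximation rates to be inserted, relying on \cite[Section~4]{bernkopf-melenk19} for the $hp$-version under Assumption~\ref{assumption:quasi_uniform_regular_meshes} and on the commuting projection-based interpolation of \cite{melenk-rojik18}. For $u \in H^{s+2}(\Omega)$ I pick $\tilde u_h \in \Sp$ with $\norm{u - \tilde u_h}_{H^1(\Omega)} \lesssim (h/p_s)^{\min(s+1,p_s)}\norm{u}_{H^{s+2}(\Omega)}$. For the vector variable the decisive structural fact is that $\RTBDMZero$ contains the full vector polynomials of degree $p_v$ in the BDM case but only those of degree $p_v-1$ in the RT case, while its divergence is piecewise of degree $p_v-1$ in both cases. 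Taking $\tilde{\pmb\varphi}_h = \pmb{\Pi}^{\div,\star}_{p_v}\pmb\varphi$ therefore gives simultaneously $\norm{\pmb\varphi - \tilde{\pmb\varphi}_h}_{L^2(\Omega)} \lesssim (h/p_v)^{r}\norm{\pmb\varphi}_{H^{s+1}(\Omega)}$ with $r = \min(s+1,p_v)$ (RT) resp.\ $r = \min(s+1,p_v+1)$ (BDM), together with $\norm{\nabla\cdot(\pmb\varphi - \tilde{\pmb\varphi}_h)}_{L^2(\Omega)} \lesssim (h/p_v)^{\min(s,p_v)}\norm{\nabla\cdot\pmb\varphi}_{H^s(\Omega)}$, the reduced order $\min(s,p_v)$ of the divergence being the key bookkeeping point. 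Since $p = \min(p_s,p_v)$, all these rates are bounded by the corresponding powers of $h/p$.

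With these at hand each asserted estimate follows by substitution and taking the minimum of the three resulting exponents. For $\norm{\nabla e^u}_{L^2(\Omega)}$ the three terms of Theorem~\ref{theorem:grad_e_u_optimal_l2_error_estimate} carry exponents $\min(s+1,p_s)$, $1+r$, and $1+\min(s,p_v) = \min(s+1,p_v+1)$, whose minimum collapses to $\min(s+1,p_s,p_v+1)$. For $\norm{e^u}_{L^2(\Omega)}$ with $p_v>1$ the additional factor $(h/p)^2$ multiplying the two vector contributions in the ``else'' branch of Theorem~\ref{theorem:e_u_optimal_l2_error_estimate} raises their exponents by two, so the binding scalar and divergence terms give the minimum $\min(s+1,p_s,p_v+1)+1$; for $p_v=1$ one instead uses the $\pmb{\mathrm{RT}}_0^0(\mathcal{T}_h)$ and $\pmb{\mathrm{BDM}}_1^0(\mathcal{T}_h)$ branches with $h/p = h$ and checks that the divergence term (of rate $\min(s,1)$) dominates, giving $h^{\min(s+1,2)}$ in both sub-cases. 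Finally $\norm{\pmb e^{\pmb\varphi}}_{L^2(\Omega)}$ follows in the same way from Theorem~\ref{theorem:e_phi_optimal_l2_error_estimate}, and here the RT/BDM split in $r$ is precisely what produces the two columns of the table, $\min(s+1,p_s+1,p_v)$ and $\min(s+1,p_s+1,p_v+1)$.

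There is no genuinely hard analytic step left: the substantial work is already contained in the preceding theorems and in the well-definedness and approximation analysis of $\IhZero$. The only real obstacle is disciplined bookkeeping: one must keep the reduced divergence rate $\min(s,p_v)$ distinct from the vector rate, separate the RT and BDM cases and the two lowest-order spaces, and verify in each branch that the claimed exponent is simultaneously a lower bound for, and attained by, the minimum of the three contributions. I would carry this out through a short case distinction according to which of $s+1$, $p_s$ (or $p_s+1$) and $p_v$ (or $p_v+1$) realizes the minimum, the value being in every instance achieved by one of the three terms.
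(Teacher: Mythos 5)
Your proposal is correct and follows essentially the same route as the paper: establish the shift-theorem regularity, insert $hp$-interpolants of $u$ and $\pmb{\varphi}$ (with the distinct $L^2$ rate $\min(s+1,p_v)$ resp.\ $\min(s+1,p_v+1)$ and divergence rate $\min(s,p_v)$) into Theorems~\ref{theorem:e_phi_optimal_l2_error_estimate}, \ref{theorem:grad_e_u_optimal_l2_error_estimate}, and \ref{theorem:e_u_optimal_l2_error_estimate}, and take minima of the exponents case by case. Your bookkeeping of the exponents, including the $p_v=1$ sub-cases and the RT/BDM split, matches the paper's (which leaves these "straightforward calculations" implicit), and your use of a single commuting interpolant to realize both vector rates simultaneously is consistent with the paper's appeal to the same operators.
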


\begin{proof}
  The regularity result follows immediately by standard arguments together with the fact that $ \pmb{\varphi} = - \nabla u $.
  We now analyze the quantities in the estimates of the Theorems~\ref{theorem:e_phi_optimal_l2_error_estimate}, \ref{theorem:grad_e_u_optimal_l2_error_estimate} and \ref{theorem:e_u_optimal_l2_error_estimate}:
  \begin{align*}
    &\norm{ u - \tilde{u}_h }_{H^1(\Omega)}
      \lesssim (h/p)^{\min(s+1, p_s)} \norm{u}_{H^{s+2}(\Omega)} \lesssim (h/p)^{\min(s+1, p_s)} \norm{f}_{H^{s}(\Omega)}, \\
    &\| \pmb{\varphi} - \tilde{\pmb{\varphi}}_h \|_{L^2(\Omega)}
      \lesssim
        \begin{cases}
           (h/p)^{\min(s+1, p_v)}   \norm{ \pmb{\varphi} }_{H^{s+1}(\Omega)} \lesssim (h/p)^{\min(s+1, p_v)}   \norm{f}_{H^{s}(\Omega)} & \mbox{for } \RTZero, \\
           (h/p)^{\min(s+1, p_v+1)} \norm{ \pmb{\varphi} }_{H^{s+1}(\Omega)} \lesssim (h/p)^{\min(s+1, p_v+1)} \norm{f}_{H^{s}(\Omega)} & \mbox{for } \BDMZero,
        \end{cases} \\
    &\| \nabla \cdot (\pmb{\varphi} - \tilde{\pmb{\varphi}}_h ) \|_{L^2(\Omega)}
      \lesssim (h/p)^{\min(s, p_v)} \norm{ \nabla \cdot \pmb{\varphi} }_{H^{s}(\Omega)} \lesssim h^{\min(s, p_v)} \norm{f}_{H^{s}(\Omega)}.
  \end{align*}
  The estimates of the Theorems~\ref{theorem:e_phi_optimal_l2_error_estimate}, \ref{theorem:grad_e_u_optimal_l2_error_estimate}, and \ref{theorem:e_u_optimal_l2_error_estimate} together with the above estimates give, after straightforward calculations, the asserted rates.
\end{proof}

We close this section with some remarks concerning sharpness of the estimates of Corollary~\ref{corollary:summary_of_estimates_for_f_in_higher_order_sobolev_space}:

\begin{remark}\label{remark:sharpness_of_estimates}
  Let the assumptions of Corollary~\ref{corollary:summary_of_estimates_for_f_in_higher_order_sobolev_space} be satisfied.
  From a best approximation point of view, since $u \in H^{s+2}(\Omega)$, we have
  \begin{align*}
    \inf_{\tilde{u}_h \in \Sp} \norm{u - \tilde{u}_h}_{L^2(\Omega)} &= O(h^{\min(s+1, p_s)+1}) \\
    \inf_{\tilde{u}_h \in \Sp} \norm{\nabla (u - \tilde{u}_h) }_{L^2(\Omega)} &= O(h^{\min(s+1, p_s)}) \\
    \inf_{\tilde{\pmb{\varphi}}_h \in \RTBDMZero} \| \pmb{\varphi} - \tilde{\pmb{\varphi}}_h \|_{L^2(\Omega)}
      &=
        \begin{cases}
           O(h^{\min(s+1, p_v)} )   & \mbox{if }\RTBDMZero = \RTZero, \\
           O(h^{\min(s+1, p_v+1)} ) & \mbox{if }\RTBDMZero = \BDMZero.
        \end{cases}
  \end{align*}
  Excluding the lowest order case $p_v = 1$ we have, choosing $p_v \geq p_s - 1$, sharpness of the estimates for $e^u$ and $\nabla e^u$.
  This can be easily seen, since the rates guaranteed by Corollary~\ref{corollary:summary_of_estimates_for_f_in_higher_order_sobolev_space}
  for $\norm{e^u}_{L^2(\Omega)}$ and $\norm{\nabla e^u}_{L^2(\Omega)}$ are the same as the ones from a best approximation argument.
  The estimates are therefore sharp.
  The lowest order case $p_v = 1$ seems to be suboptimal, as the numerical examples in Section~\ref{section:numerical_examples} suggest.
  In all other cases, i.e., $p_v > 1$ and $p_v < p_s - 1$, our numerical examples suggest sharpness of the estimates, in both the setting of a smooth solution as well as one with finite Sobolev regularity,
  but not achieving the best approximation rate.
  Since in the least squares functional the term $\norm{ \nabla u_h + \pmb{\varphi}_h }_{L^2(\Omega)}$ enforces $\nabla u_h$ and $\pmb{\varphi}_h$ to be $\textit{close}$,
  it is to be expected that an insufficient choice of $p_v$ limits the convergence rate.
  A theoretical justification concerning the observed rates in the cases $p_v = 1$ as well as $p_v > 1$ and $p_v < p_s - 1$ is yet to be studied.
  In conclusion, when the application in question is concerned with approximation of $u$ or $\nabla u$ in the $L^2(\Omega)$ norm,
  the best possible rate with the smallest number of degrees of freedom is achieved with the choice $p_v = p_s - 1$ regardless of the choice of $\RTBDMZero$.
  Therefore, it is computationally favorable to choose Raviart-Thomas elements over Brezzi-Douglas-Marini elements.
  Turning now to $\norm{\pmb{e}^{\pmb{\varphi}}}_{L^2(\Omega)}$ similar arguments guarantee sharpness of the estimates.
  In this case when $p_s + 1 \geq p_v$ and $p_s + 1 \geq p_v + 1$, for the choice of Raviart-Thomas elements and Brezzi-Douglas-Marini elements respectively.
  Again the other cases are open for theoretical justification.
  However, both theoretical as well as the numerical examples in Section~\ref{section:numerical_examples} suggest the choice of Brezzi-Douglas-Marini elements over Raviart-Thomas elements,
  when application is concerned with approximation of $\pmb{\varphi}$ in the $L^2(\Omega)$ norm.
\eremk
\end{remark}

\section{Numerical examples}\label{section:numerical_examples}

All our calculations are performed with the $hp$-FEM code NETGEN/NGSOLVE by J.~Sch\"oberl,~\cite{schoeberlNGSOLVE,schoeberl97}.
The curved boundaries are implemented using second order rational splines.

In the following we will perform two different numerical experiments.
\begin{enumerate}
  \item
    For the first one we choose $f \in C^{\infty}(\overline{\Omega})$.
    The suboptimal estimate $\norm{e^u}_{L^2(\Omega)} \lesssim  h/p  \lVert ( \pmb{e}^{\pmb{\varphi}} , e^u ) \rVert_b$
    of Lemma~\ref{lemma:e_u_suboptimal_l2_error_estimate} suffices to deduce optimal rates.
    Therefore, we only present three graphs in this section
    in order to highlight two aspects of the least squares approach:
    On the one hand the optimal choice of the
    employed polynomial degrees $p_s$ and $p_v$.
    On the other hand the superiority of Brezzi-Douglas-Marini elements over
    Raviart-Thomas elements when approximating the vector valued variable.
    For completeness we present other convergence plots in Appendix~\ref{section:appendix}.
  \item
    To showcase our new convergence result we then choose
    $f \in \cap_{\varepsilon >0} H^{1/2-\varepsilon}(\Omega)$, but $f \notin H^{1/2}(\Omega)$ with
    $u \in \cap_{\varepsilon >0} H^{5/2-\varepsilon}(\Omega)$ and
    $\pmb{\varphi} \in \cap_{\varepsilon >0} \pmb{H}^{3/2-\varepsilon}(\Omega)$.
    We again only present a selection of graphs focusing on the new convergence results,
    other convergence plots can be found in Appendix~\ref{section:appendix}.
\end{enumerate}

In all graphs, the actual numerical results are given by red dots.
The rate that is guaranteed by Corollary~\ref{corollary:summary_of_estimates_for_f_in_higher_order_sobolev_space} is plotted in black together with the number written out near the bottom right.
Furthermore, in blue the reference line for the best rate possible with the employed space $\Sp$ or $\RTBDMZero$ is plotted, depending on the quantity of interest, i.e.,
for $\norm{e^u}_{L^2(\Omega)}$ the blue reference line corresponds to $h^{\min(s+1, p_s)+1}$,
for $\norm{\nabla e^u}_{L^2(\Omega)}$ the blue reference line corresponds to $h^{\min(s+1, p_s)}$ and
for $\norm{\pmb{e}^{\pmb{\varphi}}}_{L^2(\Omega)}$ the blue reference line corresponds to $h^{\min(s+1, p_v)}$ for $\RTBDMZero = \RTZero$ and $h^{\min(s+1, p_v + 1)}$ for $\RTBDMZero = \BDMZero$.

\begin{example}\label{example:numerics_smooth_solution}
  We consider as the domain $\Omega$ the unit sphere in $\mathbb{R}^2$.
  The exact solution is the smooth function $u(x,y) = \cos(2 \pi (x^2 + y^2))$.
  The numerical results are plotted in Figures~\ref{figure:l2_error_u_RT} and \ref{figure:l2_error_u_BDM} for $\norm{e^u}_{L^2(\Omega)}$,
  in Figures~\ref{figure:l2_error_grad_u_RT} and \ref{figure:l2_error_grad_u_BDM} for $\norm{\nabla e^u}_{L^2(\Omega)}$, and
  in Figures~\ref{figure:l2_error_phi_RT} and \ref{figure:l2_error_phi_BDM} for $\norm{\pmb{e}^{\pmb{\varphi}}}_{L^2(\Omega)}$.
  There are some remarks to be made:
  \begin{itemize}
    \item
      Consider Figure~\ref{figure:l2_error_u_RT} depicting $\norm{e^u}_{L^2(\Omega)}$
      using Raviart-Thomas elements.
      The rates guaranteed by Corollary~\ref{corollary:summary_of_estimates_for_f_in_higher_order_sobolev_space} are achieved in the numerical experiment.
      The important subfigures are the ones in the subdiagonal of the discussed figure, i.e., corresponding to the choice $p_v = p_s - 1$.
      Here, apart from the lowest order case, the best possible rate with the smallest number of degrees of freedom is achieved.
      Above this subdiagonal, i.e., $p_v \geq p_s$, additional degrees of freedom will not increase the rate of convergence, since by pure best approximation arguments the rate of convergence is limited by the polynomial degree $p_s$ of the scalar variable.
      Below this subdiagonal, i.e., $p_v < p_s - 1$, we notice that the rate of convergence is also limited by the polynomial degree $p_v$ of the vector variable.
      Note that the results for $\norm{e^u}_{L^2(\Omega)}$ in Corollary~\ref{corollary:summary_of_estimates_for_f_in_higher_order_sobolev_space} are independent of the choice of the vector valued finite element space, which is also confirmed by our experiments.
      Additional convergence plots can be found in Appendix~\ref{section:appendix}.
    \item
      Consider Figures~\ref{figure:l2_error_phi_RT} and \ref{figure:l2_error_phi_BDM} depicting $\norm{\pmb{e}^{\pmb{\varphi}}}_{L^2(\Omega)}$.
      Apart from similar observations as for the scalar variable,
      it is notable that a difference in the approximation properties
      of the different spaces for the vector variable is observed, as predicted by Corollary~\ref{corollary:summary_of_estimates_for_f_in_higher_order_sobolev_space}.
      Consider wanting to achieve a rate of $h^5$.
      The combination of spaces with the smallest number of degrees of freedom corresponds to $\pmb{\mathrm{BDM}}^0_{4}(\mathcal{T}_h) \times S_{4}(\mathcal{T}_h)$ and $ \pmb{\mathrm{RT}}^0_{4}(\mathcal{T}_h) \times S_{4}(\mathcal{T}_h)$ respectively, highlighting the superiority of the Brezzi-Douglas-Marini elements when approximating $\pmb{\varphi}$.
      For further discussion see again Remark~\ref{remark:sharpness_of_estimates}.
  \end{itemize}
\end{example}

\begin{figure}[H]
	\centering
	\includegraphics[width=\textwidth]{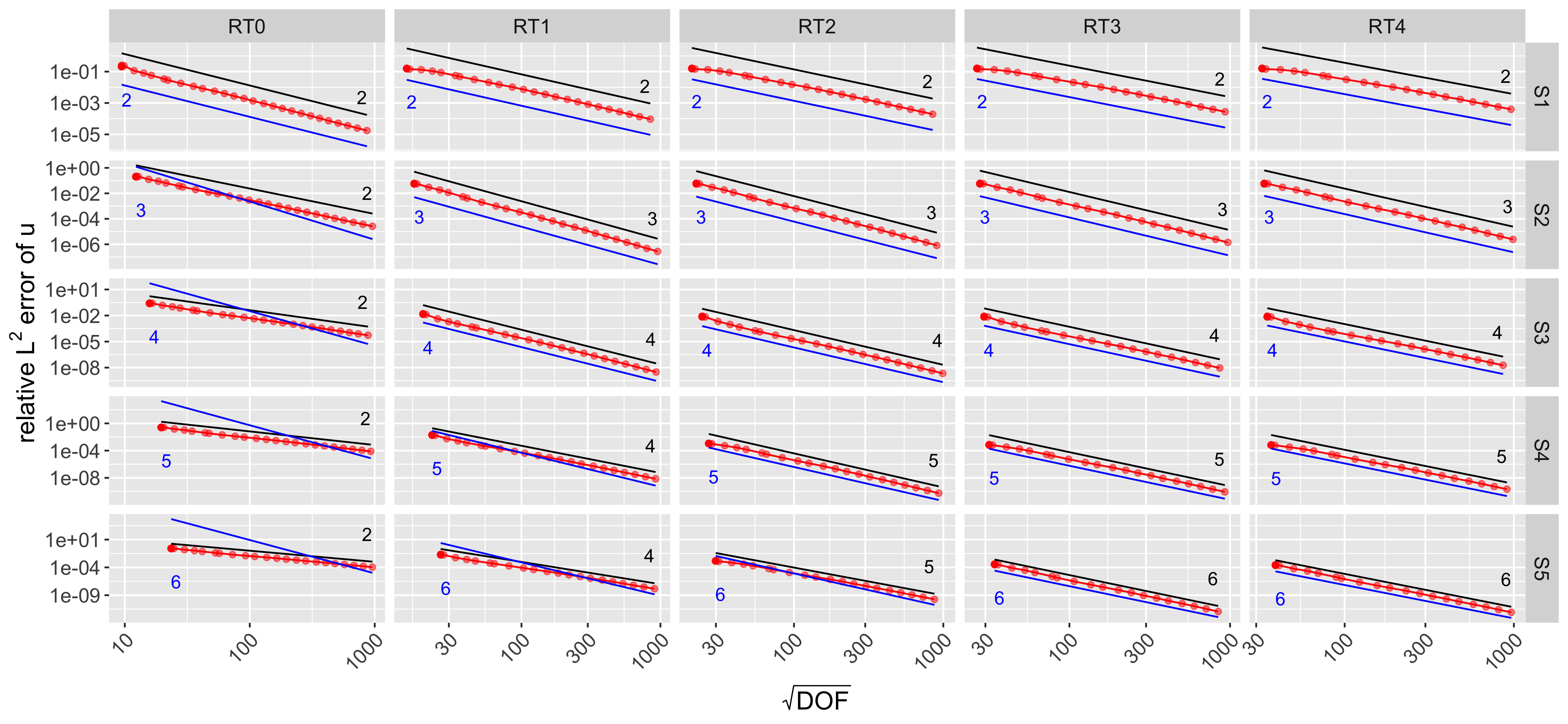}
	\caption{(cf.~Example~\ref{example:numerics_smooth_solution})
  Convergence of $\norm{e^u}_{L^2(\Omega)}$ vs.\ $\sqrt{\operatorname{DOF}} \sim 1/h$ employing $\RTBDMZero = \RTZero$.
	}
	\label{figure:l2_error_u_RT}
\end{figure}

\begin{figure}[H]
	\centering
	\includegraphics[width=\textwidth]{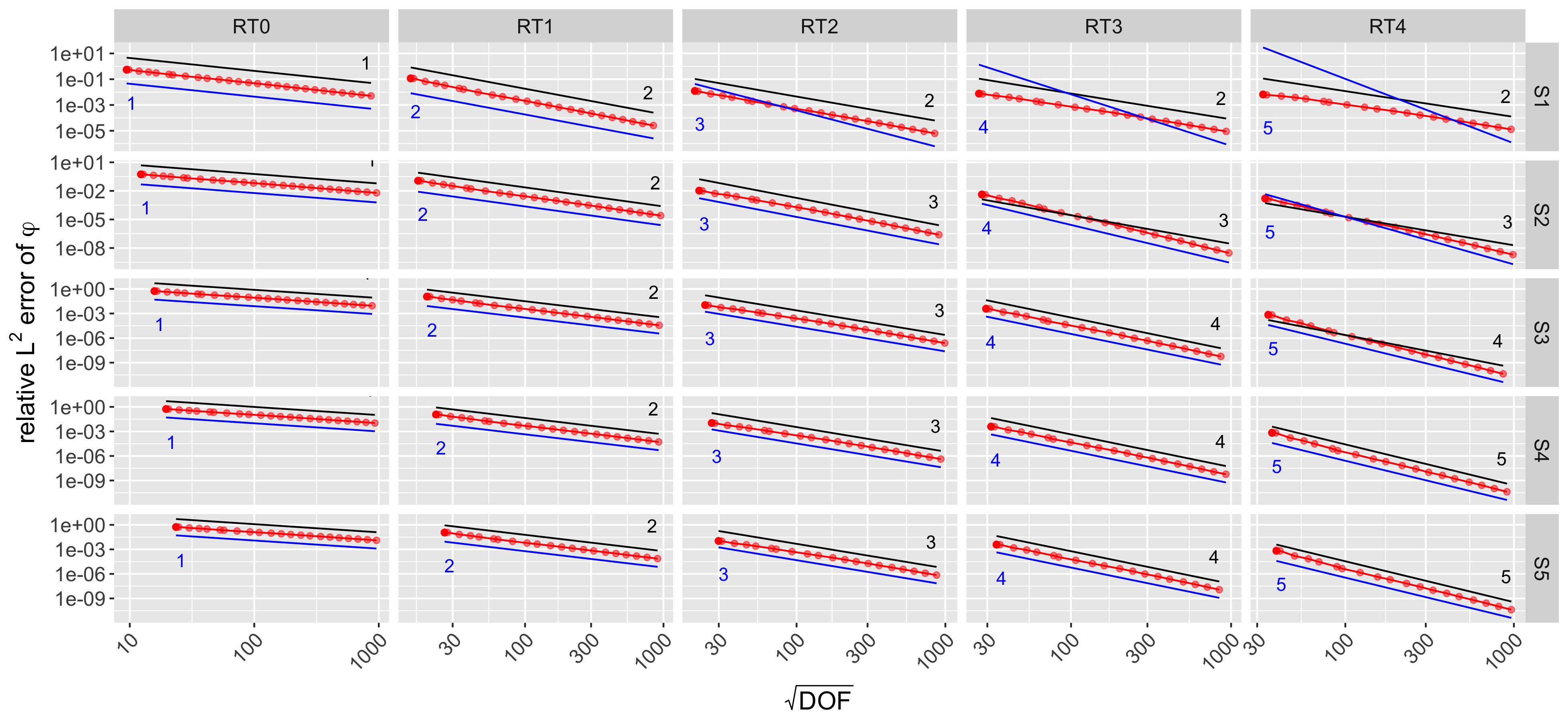}
	\caption{(cf.\ Example~\ref{example:numerics_smooth_solution})
  Convergence of $\norm{\pmb{e}^{\pmb{\varphi}}}_{L^2(\Omega)}$ vs.\ $\sqrt{\operatorname{DOF}} \sim 1/h$ employing $\RTBDMZero = \RTZero$.
	}
	\label{figure:l2_error_phi_RT}
\end{figure}

\begin{figure}[H]
	\centering
	\includegraphics[width=\textwidth]{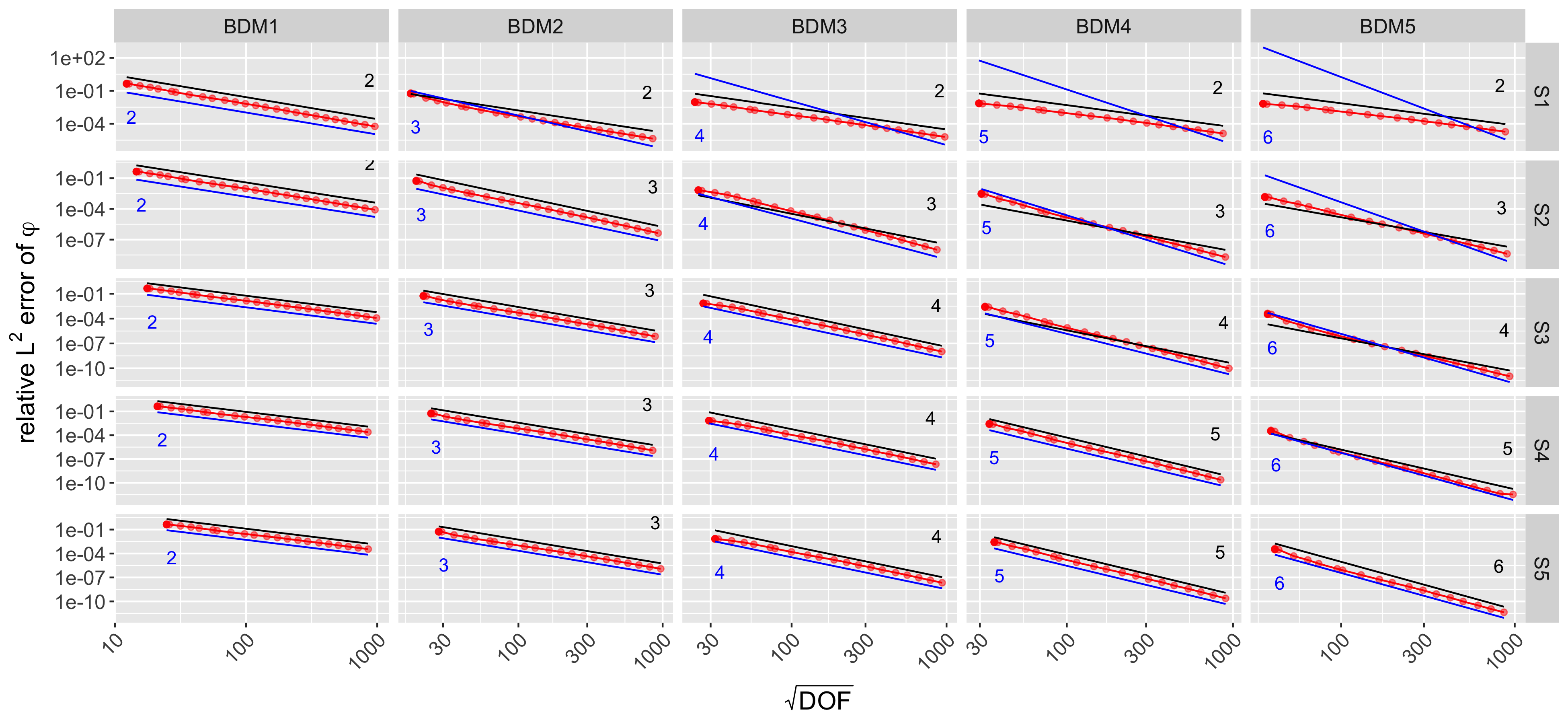}
	\caption{(cf.\ Example~\ref{example:numerics_smooth_solution})
    Convergence of $\norm{\pmb{e}^{\pmb{\varphi}}}_{L^2(\Omega)}$ vs.\ $\sqrt{\operatorname{DOF}} \sim 1/h$ employing $\RTBDMZero = \BDMZero$.
	}
	\label{figure:l2_error_phi_BDM}
\end{figure}

\begin{example}\label{example:numerics_singular_solution}
  For our second example we consider again the case of $\Omega$ being the unit sphere in $\mathbb{R}^2$.
  The exact solution $u(x,y)$ is calculated corresponding to the right-hand side $f(x,y) = \mathbbm{1}_{[0,1/2]}(\sqrt{x^2 + y^2})$.
  Therefore $u \in \cap_{\varepsilon >0} H^{5/2-\varepsilon}(\Omega)$.
  The numerical results for the choice of Raviart-Thomas elements are plotted in Figure~\ref{figure:l2_error_u_RT_singular} for $\norm{e^u}_{L^2(\Omega)}$,
  in Figure~\ref{figure:l2_error_grad_u_RT_singular} for $\norm{\nabla e^u}_{L^2(\Omega)}$ and
  in Figure~\ref{figure:l2_error_phi_RT_singular} for $\norm{\pmb{e}^{\pmb{\varphi}}}_{L^2(\Omega)}$.
  Apart from the remarks already made in Example~\ref{example:numerics_smooth_solution} we note that we observe the improved convergence result
  when dealing with limited Sobolev regularity of the data. Furthermore, in the lowest order case $p_v = 1$ the guaranteed rate seems to be suboptimal.
  The plots for the choice of Brezzi-Douglas-Marini elements are presented in Appendix~\ref{section:appendix}.
\end{example}

\begin{figure}[H]
	\centering
	\includegraphics[width=\textwidth]{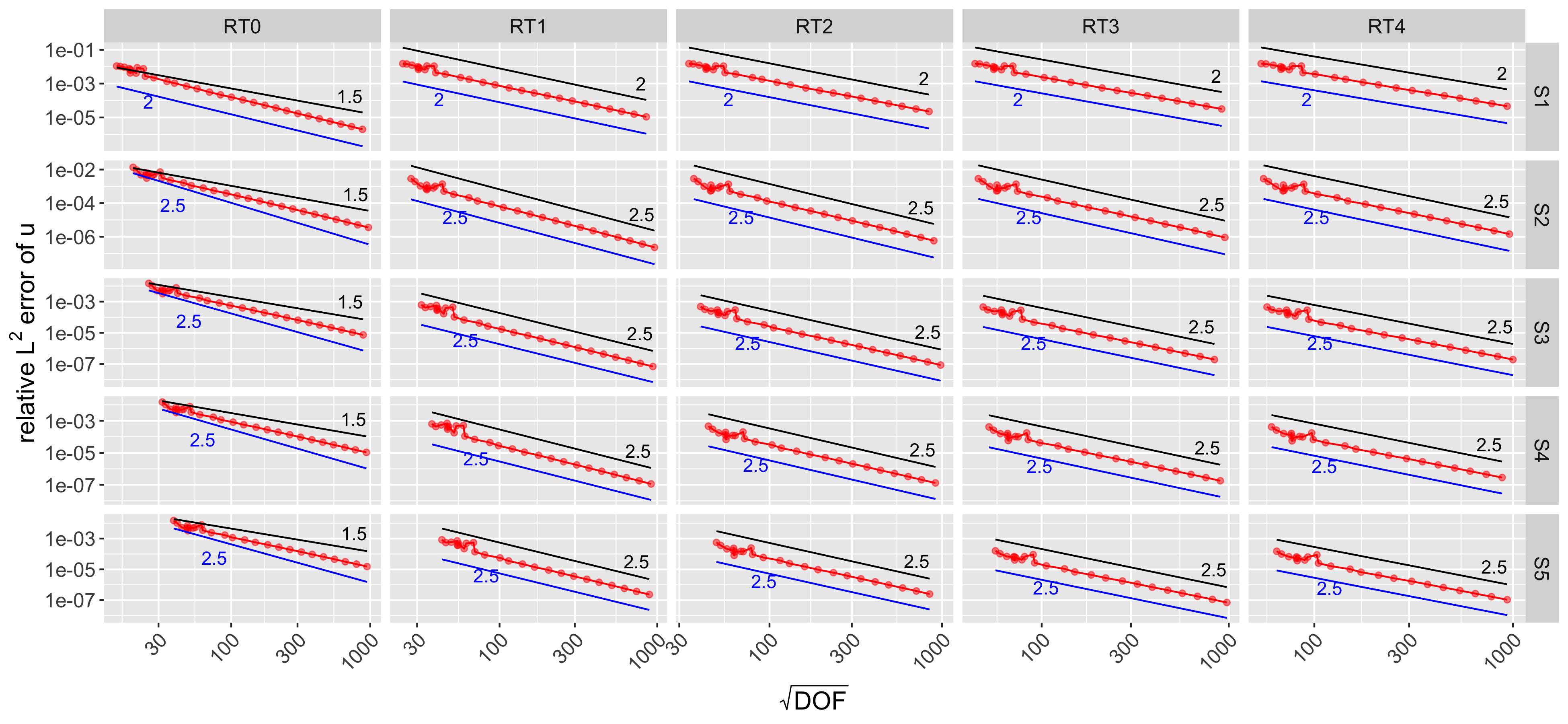}
	\caption{(cf.\ Example~\ref{example:numerics_singular_solution})
  Convergence of $\norm{e^u}_{L^2(\Omega)}$ vs.\ $\sqrt{\operatorname{DOF}} \sim 1/h$ employing $\RTBDMZero = \RTZero$.
	}
	\label{figure:l2_error_u_RT_singular}
\end{figure}

\begin{figure}[H]
	\centering
	\includegraphics[width=\textwidth]{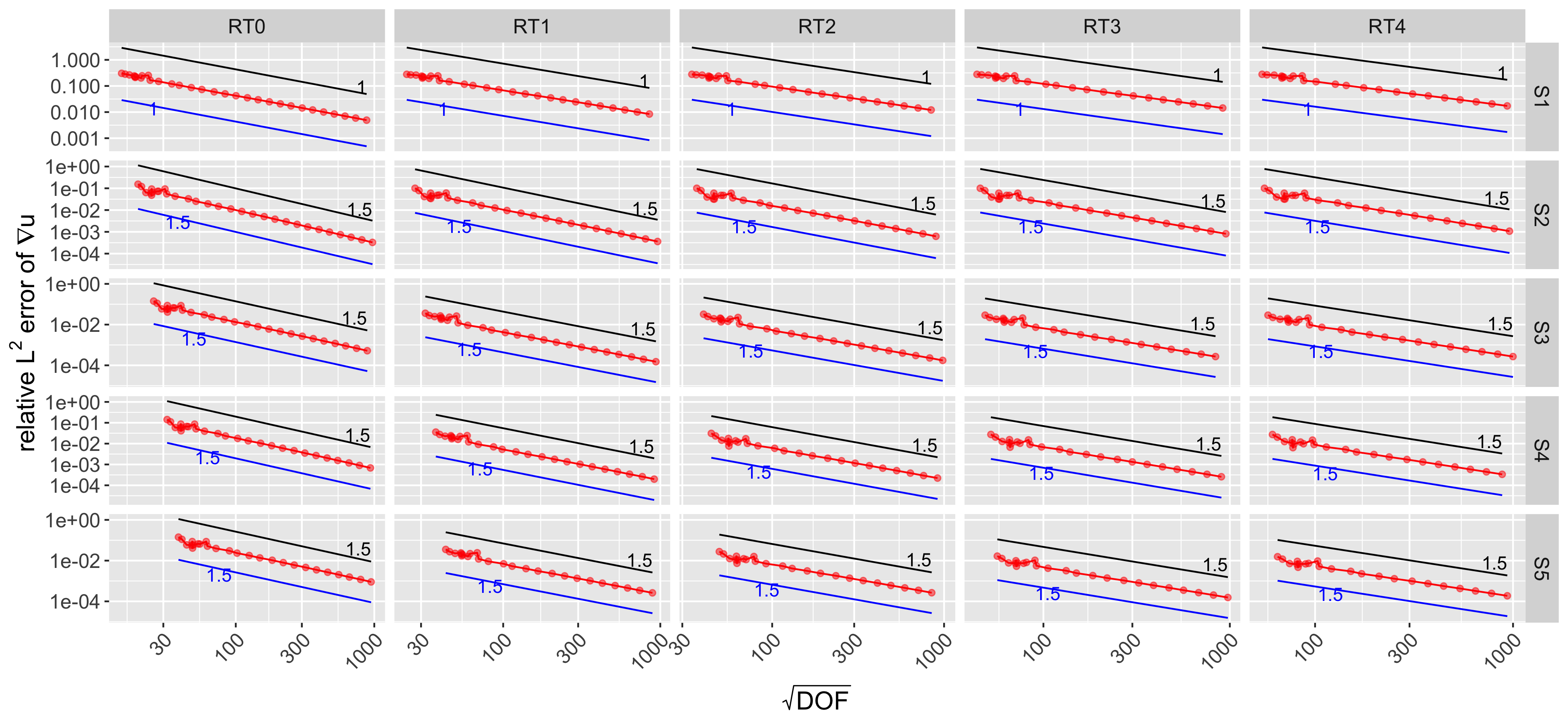}
	\caption{(cf.\ Example~\ref{example:numerics_singular_solution})
  Convergence of $\norm{\nabla e^u}_{L^2(\Omega)}$ vs.\ $\sqrt{\operatorname{DOF}} \sim 1/h$ employing $\RTBDMZero = \RTZero$.
	}
	\label{figure:l2_error_grad_u_RT_singular}
\end{figure}

\begin{figure}[H]
	\centering
	\includegraphics[width=\textwidth]{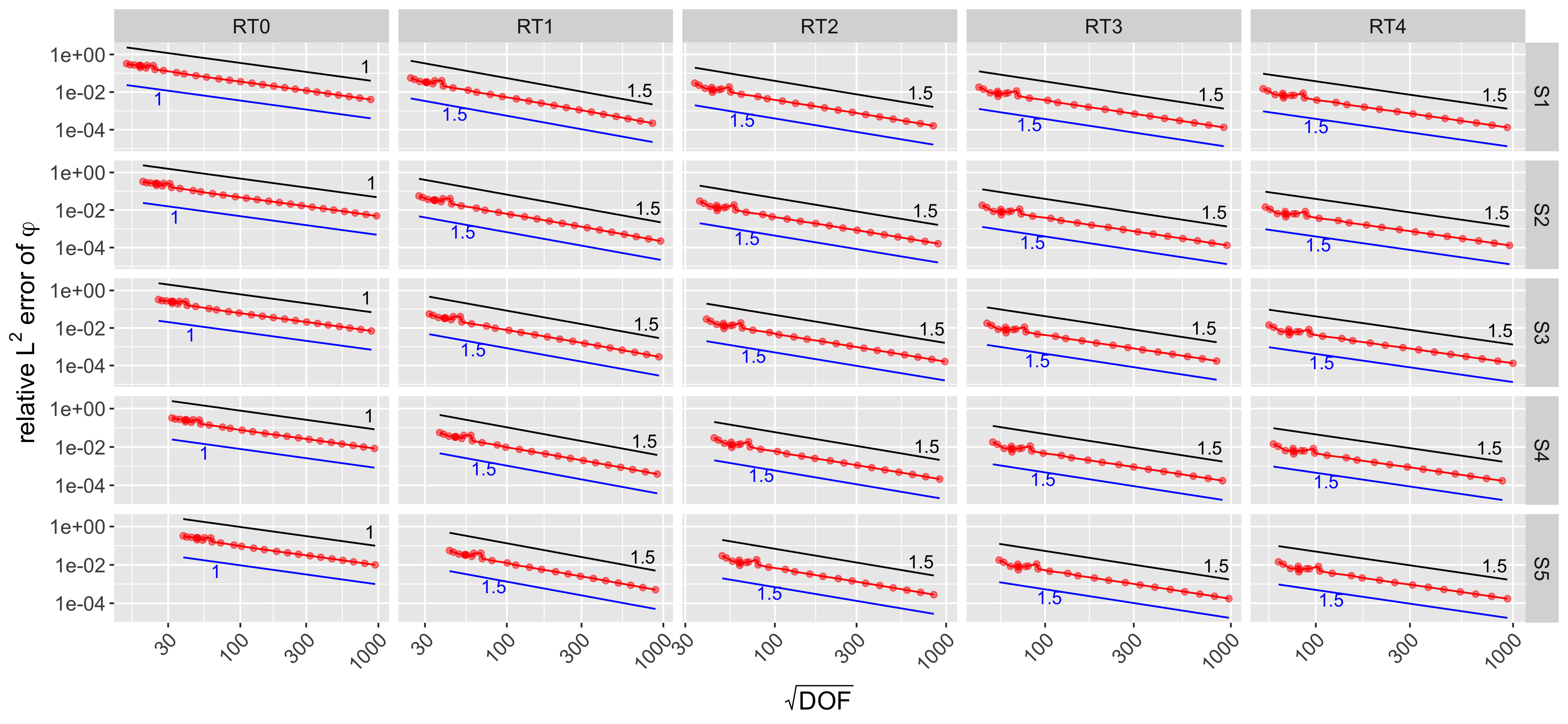}
	\caption{(cf.\ Example~\ref{example:numerics_singular_solution})
  Convergence of $\norm{\pmb{e}^{\pmb{\varphi}}}_{L^2(\Omega)}$ vs.\ $\sqrt{\operatorname{DOF}} \sim 1/h$ employing $\RTBDMZero = \RTZero$.
	}
	\label{figure:l2_error_phi_RT_singular}
\end{figure}

\textbf{Acknowledgement}:
MB is grateful for the financial support by the Austrian Science Fund (FWF) through the doctoral school \textit{Dissipation and dispersion in nonlinear PDEs} (grant W1245).
MB thanks Joachim Sch\"oberl (TU Wien) and his group for their support in connection with the numerical experiments.

\appendix
\section{Appendix}\label{section:appendix}

For completeness we present additional convergence plots below.
In Figure~\ref{figure:l2_error_u_BDM} we plot $\norm{e^u}_{L^2(\Omega)}$
employing Brezzi-Douglas-Marini elements
for the problem considered in Example~\ref{example:numerics_smooth_solution}.
The Figures~\ref{figure:l2_error_grad_u_RT} and \ref{figure:l2_error_grad_u_BDM} depicting $\norm{\nabla e^u}_{L^2(\Omega)}$ are essentially the same just one order less than $\norm{e^u}_{L^2(\Omega)}$.
The numerical results for the finite regularity solution
considered in Example~\ref{example:numerics_singular_solution} are plotted in
Figure~\ref{figure:l2_error_u_BDM_singular} for $\norm{e^u}_{L^2(\Omega)}$,
in Figure~\ref{figure:l2_error_grad_u_BDM_singular} for $\norm{\nabla e^u}_{L^2(\Omega)}$ and
in Figure~\ref{figure:l2_error_phi_BDM_singular} for $\norm{\pmb{e}^{\pmb{\varphi}}}_{L^2(\Omega)}$.

\begin{figure}[H]
	\centering
	\includegraphics[width=\textwidth]{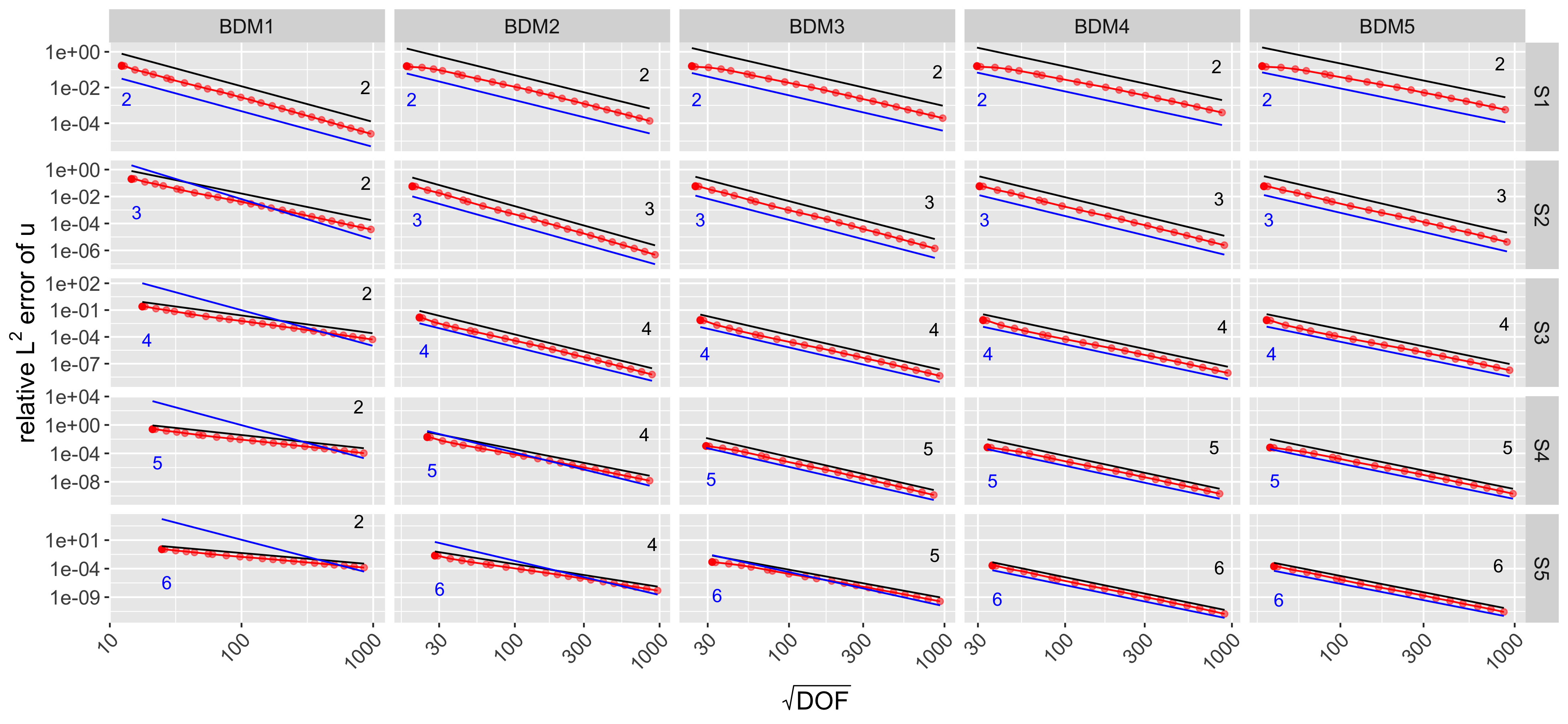}
	\caption{(cf.\ Example~\ref{example:numerics_smooth_solution})
    Convergence of $\norm{e^u}_{L^2(\Omega)}$ vs.\ $\sqrt{\operatorname{DOF}} \sim 1/h$ employing $\RTBDMZero = \BDMZero$.
        }
	\label{figure:l2_error_u_BDM}
\end{figure}

\begin{figure}[H]
	\centering
	\includegraphics[width=\textwidth]{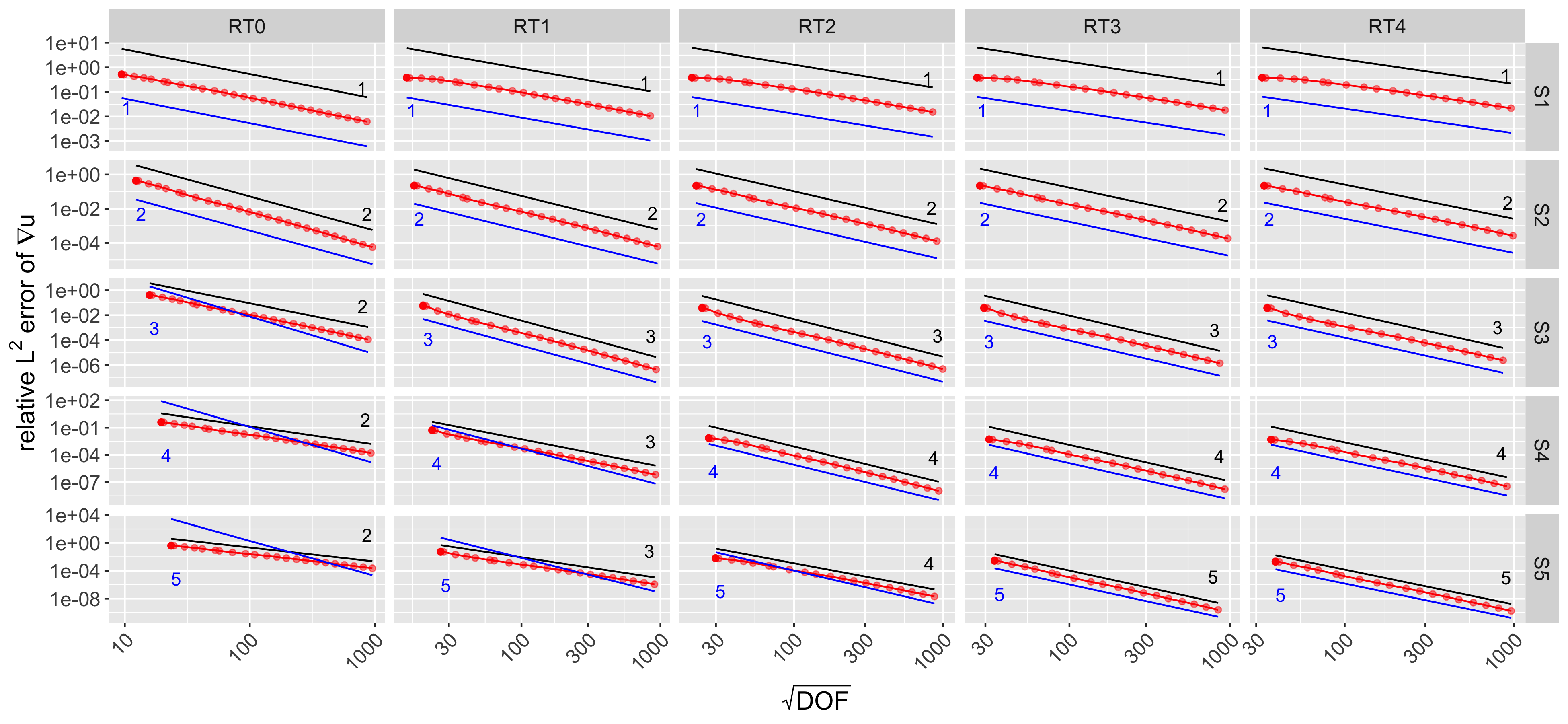}
	\caption{
(cf.\ Example~\ref{example:numerics_smooth_solution})
  Convergence of $\norm{\nabla e^u}_{L^2(\Omega)}$ vs.\ $\sqrt{\operatorname{DOF}} \sim 1/h$ employing $\RTBDMZero = \RTZero$.
	}
	\label{figure:l2_error_grad_u_RT}
\end{figure}

\begin{figure}[H]
	\centering
	\includegraphics[width=\textwidth]{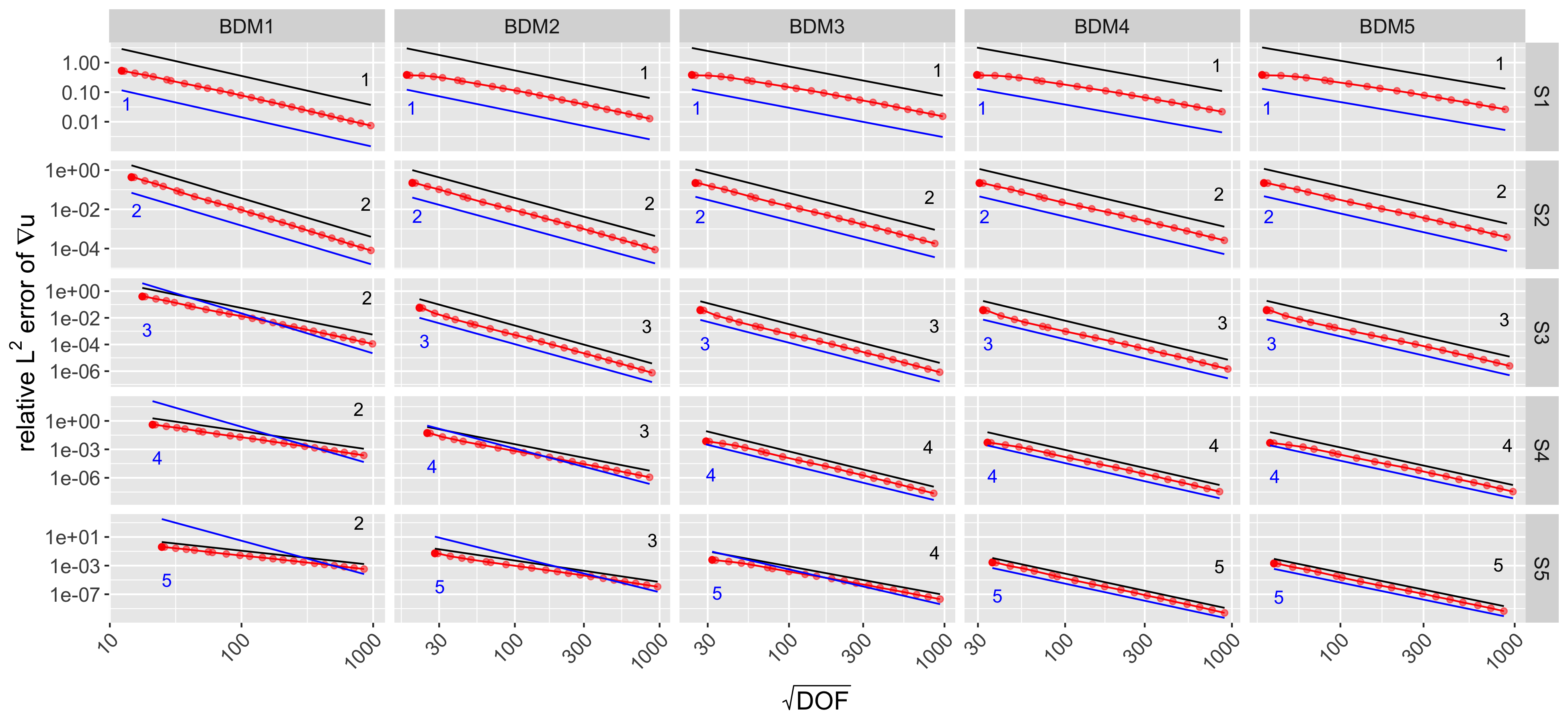}
	\caption{(cf.\ Example~\ref{example:numerics_smooth_solution})
    Convergence of $\norm{\nabla e^u}_{L^2(\Omega)}$ vs.\ $\sqrt{\operatorname{DOF}} \sim 1/h$ employing $\RTBDMZero = \BDMZero$.
	}
	\label{figure:l2_error_grad_u_BDM}
\end{figure}

\begin{figure}[H]
	\centering
	\includegraphics[width=\textwidth]{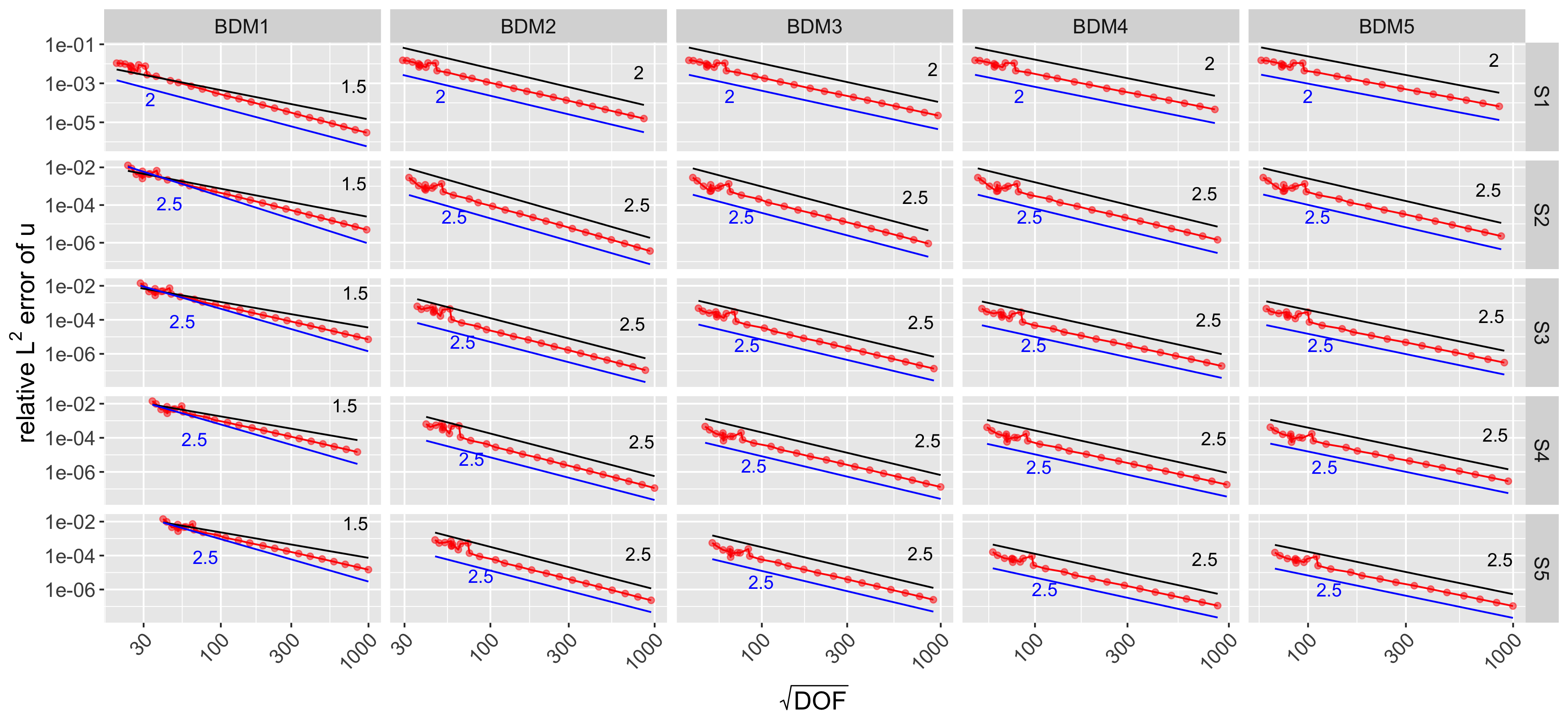}
	\caption{(cf.\ Example~\ref{example:numerics_singular_solution})
    Convergence of $\norm{e^u}_{L^2(\Omega)}$ vs.\ $\sqrt{\operatorname{DOF}} \sim 1/h$ employing $\RTBDMZero = \BDMZero$.
	}
	\label{figure:l2_error_u_BDM_singular}
\end{figure}

\begin{figure}[H]
	\centering
	\includegraphics[width=\textwidth]{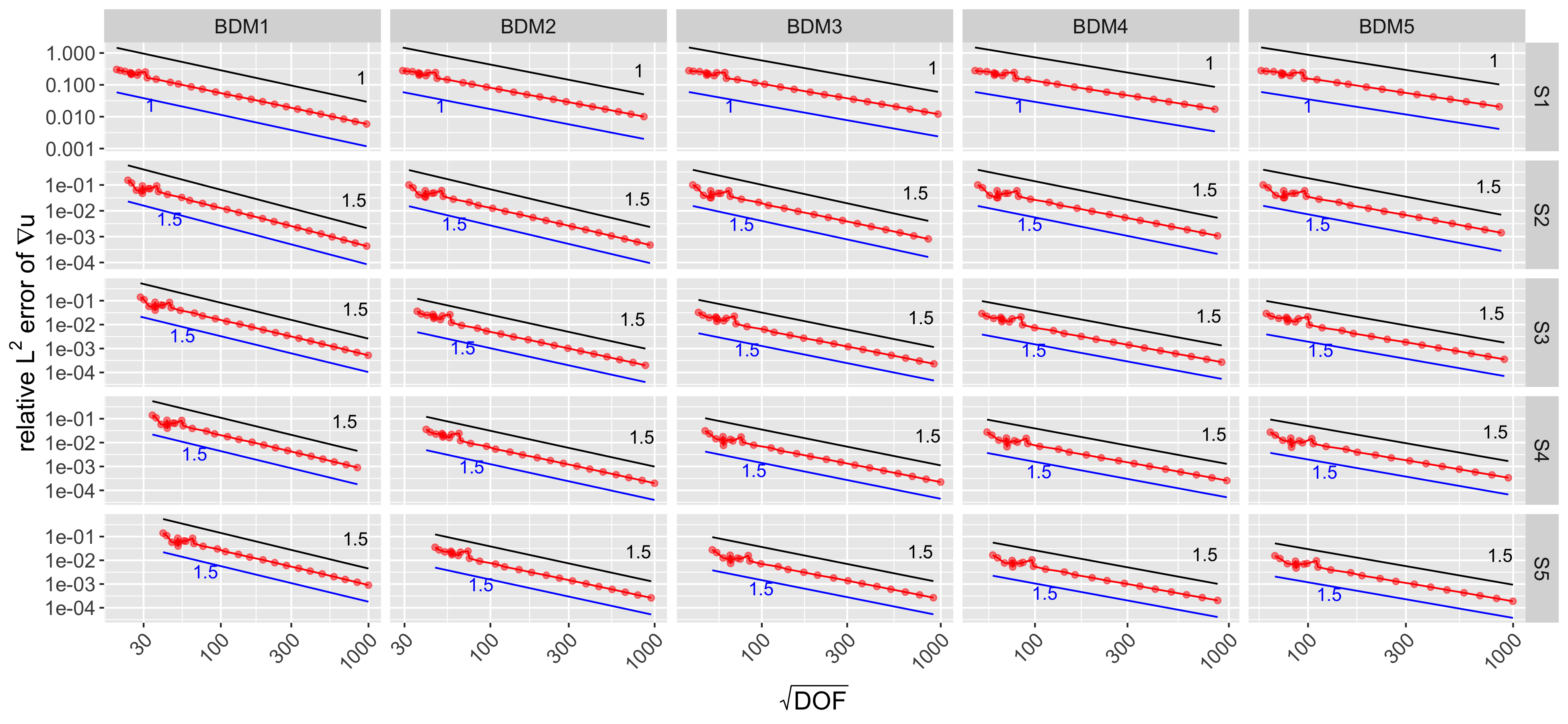}
	\caption{(cf.\ Example~\ref{example:numerics_singular_solution})
    Convergene of $\norm{\nabla e^u}_{L^2(\Omega)}$ vs.\ $\sqrt{\operatorname{DOF}} \sim 1/h$ employing $\RTBDMZero = \BDMZero$.
	}
	\label{figure:l2_error_grad_u_BDM_singular}
\end{figure}

\begin{figure}[H]
	\centering
	\includegraphics[width=\textwidth]{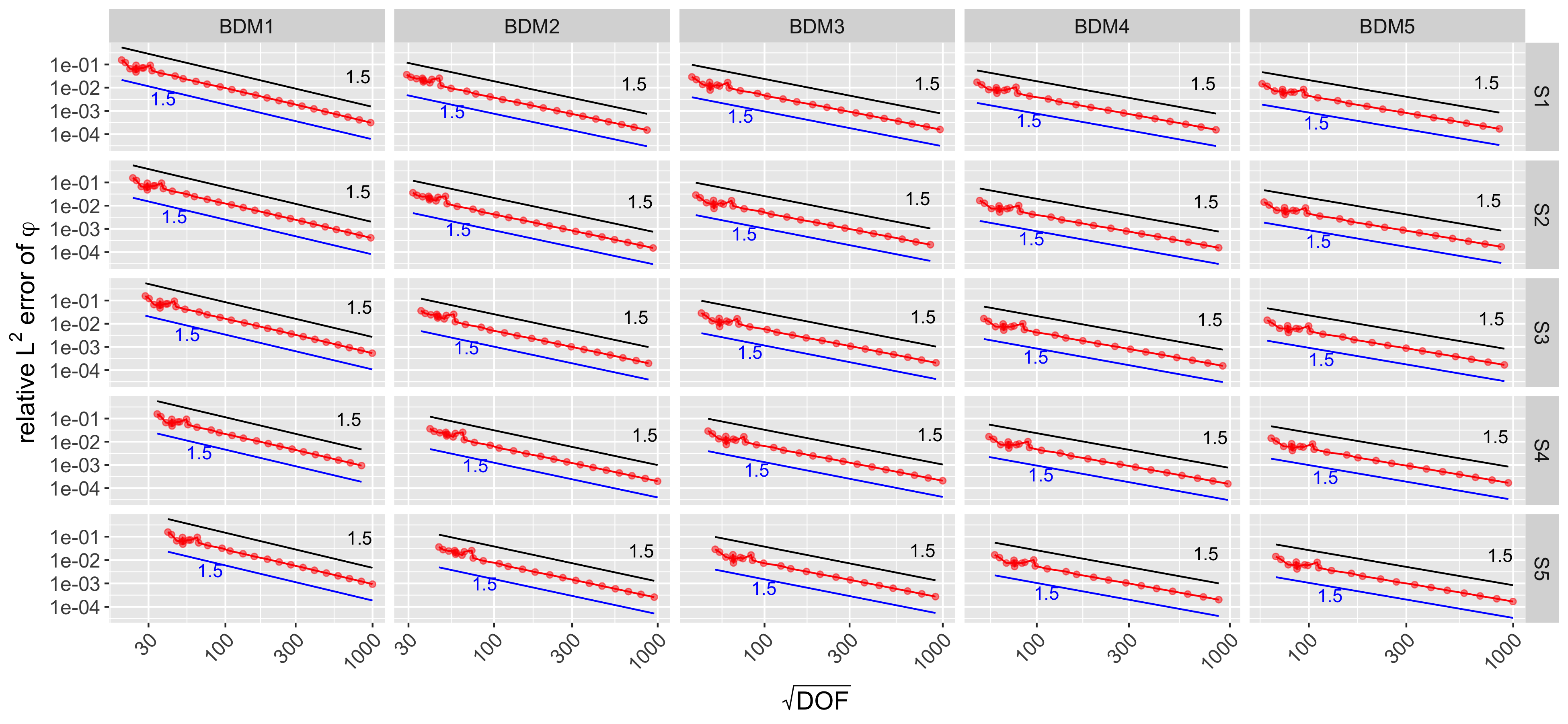}
	\caption{(cf.\ Example~\ref{example:numerics_singular_solution})
    Convergence of $\norm{\pmb{e}^{\pmb{\varphi}}}_{L^2(\Omega)}$ vs.\ $\sqrt{\operatorname{DOF}} \sim 1/h$ employing $\RTBDMZero = \BDMZero$.
	}
	\label{figure:l2_error_phi_BDM_singular}
\end{figure}


\bibliographystyle{alpha}
\bibliography{literature.bib}

\begin{thebibliography}{CLMM94}

\bibitem[BBF13]{boffi-brezzi-fortin13}
Daniele Boffi, Franco Brezzi, and Michel Fortin.
\newblock {\em Mixed finite element methods and applications}, volume~44 of
  {\em Springer Series in Computational Mathematics}.
\newblock Springer, Heidelberg, 2013.

\bibitem[BG05]{bochev-gunzburger05}
Pavel Bochev and Max Gunzburger.
\newblock On least-squares finite element methods for the {P}oisson equation
  and their connection to the {D}irichlet and {K}elvin principles.
\newblock {\em SIAM J. Numer. Anal.}, 43(1):340--362, 2005.

\bibitem[BG09]{bochev-gunzburger09}
Pavel~B. Bochev and Max~D. Gunzburger.
\newblock {\em Least-squares finite element methods}, volume 166 of {\em
  Applied Mathematical Sciences}.
\newblock Springer, New York, 2009.

\bibitem[BM19]{bernkopf-melenk19}
Maximilian Bernkopf and Jens~Markus Melenk.
\newblock Analysis of the $hp$-{V}ersion of a {F}irst {O}rder {S}ystem {L}east
  {S}quares {M}ethod for the {H}elmholtz {E}quation.
\newblock In Thomas Apel, Ulrich Langer, Arnd Meyer, and Olaf Steinbach,
  editors, {\em Advanced Finite Element Methods with Applications: Selected
  Papers from the 30th Chemnitz Finite Element Symposium 2017}, pages 57--84.
  Springer International Publishing, Cham, 2019.

\bibitem[CLMM94]{cai-lazarov-manteufel-mccormick94}
Z.~Cai, R.~Lazarov, T.~A. Manteuffel, and S.~F. McCormick.
\newblock First-order system least squares for second-order partial
  differential equations. {I}.
\newblock {\em SIAM J. Numer. Anal.}, 31(6):1785--1799, 1994.

\bibitem[CMM97a]{cai-manteufel-mccormick97stokes}
Z.~Cai, T.~A. Manteuffel, and S.~F. McCormick.
\newblock First-order system least squares for the {S}tokes equations, with
  application to linear elasticity.
\newblock {\em SIAM J. Numer. Anal.}, 34(5):1727--1741, 1997.

\bibitem[CMM97b]{cai-manteufel-mccormick97}
Zhiqiang Cai, Thomas~A. Manteuffel, and Stephen~F. McCormick.
\newblock First-order system least squares for second-order partial
  differential equations. {II}.
\newblock {\em SIAM J. Numer. Anal.}, 34(2):425--454, 1997.

\bibitem[CQ17]{chen-qiu17}
Huangxin Chen and Weifeng Qiu.
\newblock A first order system least squares method for the {H}elmholtz
  equation.
\newblock {\em Journal of Computational and Applied Mathematics}, 309:145 --
  162, 2017.

\bibitem[EGSV19]{ern-gudi-smears-vohralik19}
Alexandre Ern, Thirupathi Gudi, Iain Smears, and Martin Vohralík.
\newblock Equivalence of local-and global-best approximations, a simple stable
  local commuting projector, and optimal $hp$ approximation estimates in
  ${H}(\mathrm{div})$, 2019.
\newblock arXiv:1908.08158.

\bibitem[Jes77]{jespersen77}
Dennis~C. Jespersen.
\newblock A least squares decomposition method for solving elliptic equations.
\newblock {\em Math. Comp.}, 31(140):873--880, 1977.

\bibitem[Mon03]{monk03}
Peter Monk.
\newblock {\em Finite element methods for {M}axwell's equations}.
\newblock Numerical Mathematics and Scientific Computation. Oxford University
  Press, New York, 2003.

\bibitem[MR20]{melenk-rojik18}
J.~M. Melenk and C.~Rojik.
\newblock On commuting {$p$}-version projection-based interpolation on
  tetrahedra.
\newblock {\em Math. Comp.}, 89(321):45--87, 2020.

\bibitem[Roj19]{rojik19}
Claudio Rojik.
\newblock {\em $p$-version projection based interpolation}.
\newblock PhD thesis, TU Wien, 2019.
\newblock https://repositum.tuwien.at/handle/20.500.12708/17.

\bibitem[Sch]{schoeberlNGSOLVE}
Joachim Sch\"oberl.
\newblock Finite {E}lement {S}oftware {NETGEN}/{NGS}olve version 6.2.
\newblock \url{https://ngsolve.org/}.

\bibitem[Sch97]{schoeberl97}
Joachim Sch{\"o}berl.
\newblock {NETGEN} - {A}n advancing front 2{D}/3{D}-mesh generator based on
  abstract rules.
\newblock {\em Computing and Visualization in Science}, 1(1):41--52, Jul 1997.

\end{thebibliography}

\end{document}